\documentclass[12pt,a4paper,leqno]{amsart} 
\pagestyle{headings} \setcounter{page}{1}
\addtolength{\hoffset}{-1.25cm} \addtolength{\textwidth}{2.5cm}
\addtolength{\voffset}{-1cm} \addtolength{\textheight}{2cm}

\usepackage[T1]{fontenc}       
\usepackage{amsfonts}          
\usepackage{amsmath}
\usepackage{amssymb}
\usepackage{overpic}
\usepackage{euscript} 
\usepackage{eurosym}
\usepackage{comment}
\usepackage{mathrsfs} 
\usepackage{ifthen}
\usepackage{esint} 
\usepackage{color}


\long\def\symbolfootnote[#1]#2{\begingroup%
\def\thefootnote{\fnsymbol{footnote}}\footnote[#1]{#2}\endgroup}

\setlength{\parindent}{20pt}


%
{\qed\vspace{5pt}}

\usepackage{amsthm}

\newtheoremstyle{lause}
{5pt}
{5pt}
{\slshape}
{\parindent}
{\bfseries}
{.}
{.5em}
{}

\theoremstyle{lause}

\newtheoremstyle{maaritelma}
{5pt}
{5pt}
{\rmfamily}
{\parindent}
{\bfseries}
{.}
{.5em}
{}

\theoremstyle{maaritelma}
\newtheoremstyle{lause}
{5pt}
{5pt}
{\slshape}
{\parindent}
{\bfseries}
{.}
{.5em}
{}

\theoremstyle{lause}
\newtheorem{thm}{Theorem}[section]
\newtheorem{lem}[thm]{Lemma}

\newtheorem{cor}[thm]{Corollary}
\newtheorem{problem}[thm]{Problem}

\newtheoremstyle{maaritelma}
{5pt}
{5pt}
{\rmfamily}
{\parindent}
{\bfseries}
{.}
{.5em}
{}

\theoremstyle{maaritelma}
\newtheorem{defn}[thm]{Definition}

\newtheorem{exmp}[thm]{Example}
\newtheorem{rem}[thm]{Remark}

\numberwithin{equation}{section}

\begin{document}

\thispagestyle{empty}



\begin{center}

{\large{\textbf{Various concepts of Riesz energy of measures and
application to condensers with touching plates}}}

\vspace{18pt}

\textbf{Bent Fuglede and Natalia Zorii}

\vspace{18pt}

\footnotesize\textsl{Dedicated to Professor Stephen J.~Gardiner\\
on the occasion of his 60th birthday}\vspace{8pt}

\footnotesize{\address{Department of Mathematical Sciences,  Universitetsparken 5,  2100 Copenhagen, Denmark\\
fuglede@math.ku.dk}} \vspace{4pt}

\footnotesize{\address{Institute of Mathematics of National Academy
of Sciences of Ukraine, \linebreak Tereshchenkivska 3, 01601,
Kyiv-4, Ukraine\\
natalia.zorii@gmail.com }}

\end{center}

\vspace{12pt}

{\footnotesize{\textbf{Abstract.} We develop further the concept of
weak $\alpha$-Riesz energy with $\alpha\in(0,2]$ of Radon measures
$\mu$ on $\mathbb R^n$, $n\geqslant3$, introduced in our preceding
study and defined by $\int(\kappa_{\alpha/2}\mu)^2\,dm$, $m$
denoting the Lebesgue measure on $\mathbb R^n$. Here
$\kappa_{\alpha/2}\mu$ is the potential of $\mu$ relative to the
$\alpha/2$-Riesz kernel $|x-y|^{\alpha/2-n}$. This concept extends
that of standard $\alpha$-Riesz energy, and for $\mu$ with
$\kappa_{\alpha/2}\mu\in L^2(m)$ it coincides with that of
Deny--Schwartz energy defined with the aid of the Fourier transform.
We investigate minimum weak $\alpha$-Riesz energy problems with
external fields in both the unconstrained and constrained settings
for generalized condensers $(A_1,A_2)$ such that the closures of
$A_1$ and $A_2$ in $\mathbb R^n$ are allowed to intersect one
another. (Such problems with the standard $\alpha$-Riesz energy in
place of the weak one would be unsolvable, which justifies the need
for the concept of weak energy when dealing with condenser
problems.) We obtain sufficient and/or necessary conditions for the
existence of minimizers, provide descriptions of their supports and
potentials, and single out their characteristic properties. To this
end we have discovered an intimate relation between minimum weak
$\alpha$-Riesz energy problems over signed measures associated with
$(A_1,A_2)$ and minimum $\alpha$-Green energy problems over positive
measures carried by $A_1$. Crucial for our analysis of the latter
problems is the perfectness of the $\alpha$-Green kernel,
established in our recent paper. As an application of the results
obtained, we describe the support of the $\alpha$-Green equilibrium
measure.}}
\symbolfootnote[0]{\quad 2010 Mathematics Subject Classification:
Primary 31C15.} \symbolfootnote[0]{\quad Key words: standard and
weak Riesz energy, Deny--Schwartz energy, minimum energy problems,
condensers with touching plates, external fields, constraints.}

\vspace{6pt}

\markboth{\textsl{Bent Fuglede and Natalia Zorii}} {\textsl{Minimum
energy problems for condensers with touching plates}}

\section{Introduction}

Throughout the paper we fix a natural $n\geqslant3$ and a real
$\alpha\in(0,2]$. Let $\mathfrak M(\mathbb R^n)$ stand for the
linear space of all real-valued Radon measures $\mu$ on $\mathbb
R^n$, equipped with the \textsl{vague\/} topology, i.e.\ the
topology of pointwise convergence on the class $C_0(\mathbb R^n)$ of
all (real-valued finite) continuous functions on $\mathbb R^n$ with
compact support. The \textsl{standard\/} concept of
\textsl{energy\/} of a (signed) Radon measure $\mu\in\mathfrak
M(\mathbb R^n)$ relative to the $\alpha$-\textsl{Riesz\/} kernel
$\kappa_\alpha(x,y):=|x-y|^{\alpha-n}$ on $\mathbb R^n$, $|x-y|$
being the Euclidean distance between $x,y\in\mathbb R^n$, is
introduced by
\begin{equation}\label{ren}E_\alpha(\mu):=E_{\kappa_\alpha}(\mu):=
\int\kappa_\alpha(x,y)\,d(\mu\otimes\mu)(x,y)\end{equation}
whenever $E_\alpha(\mu^+)+E_\alpha(\mu^-)$ or $E_\alpha(\mu^+,\mu^-)$ is finite, and finiteness of
$E_\alpha(\mu)$ means that $\kappa_\alpha$ is $\bigl(|\mu|\otimes|\mu|\bigr)$-integrable, i.e.\
$E_\alpha(|\mu|)<\infty$. Here $\mu^+$ and $\mu^-$ denote the positive and negative parts in the
Hahn--Jordan decomposition of a measure $\mu\in\mathfrak M(\mathbb R^n)$, $|\mu|:=\mu^++\mu^-$, and
\[E_\alpha(\mu^+,\mu^-):=E_{\kappa_\alpha}(\mu^+,\mu^-):=\int\kappa_\alpha(x,y)\,d(\mu^+\otimes\mu^-)(x,y)\]
is the (standard) $\alpha$-Riesz \textsl{mutual energy\/} of $\mu^+$
and $\mu^-$.

The $\alpha$-Riesz kernel is \textsl{strictly positive definite\/}
in the sense that $E_\alpha(\mu)$, $\mu\in\mathfrak M(\mathbb R^n)$,
is ${}\geqslant0$ (whenever defined), and equals $0$ only for
$\mu=0$. The set $\mathcal E_\alpha(\mathbb R^n)$ of all
$\mu\in\mathfrak M(\mathbb R^n)$ with $E_\alpha(\mu)<\infty$
therefore forms a pre-Hilbert space with the (standard) inner
product $\langle\mu,\nu\rangle_\alpha:=E_\alpha(\mu,\nu)$,
$\mu,\nu\in\mathcal E_\alpha(\mathbb R^n)$, and the (standard
energy) norm $\|\mu\|_\alpha:=\sqrt{E_\alpha(\mu)}$.

Fix an (open connected) domain $D$ in $\mathbb R^n$. An ordered pair
$\mathbf A=(A_1,A_2)$, where $A_1$ is a relatively closed subset of
$D$ and $A_2=D^c:=\mathbb R^n\setminus D$, is said to be a
(\textsl{generalized\/}) \textsl{condenser\/} in $\mathbb R^n$,
while $A_1$ and $A_2$ are termed the \textsl{positive\/} and
\textsl{negative plates\/} of $\mathbf A$. Note that although
$A_1\cap A_2=\varnothing$, $\mathrm{Cl}_{\mathbb R^n}A_1$ and $A_2$
may have points in common; otherwise we shall call $\mathbf
A=(A_1,A_2)$ a \textsl{standard\/} condenser. A measure
$\mu\in\mathfrak M(\mathbb R^n)$ is said to be \textsl{associated
with\/} a generalized condenser $\mathbf A$ if $\mu^+$ and  $\mu^-$
are carried by $A_1$ and $A_2$, respectively. The set $\mathfrak
M(\mathbf A)$ consisting of all those $\mu$ forms a convex cone in
$\mathfrak M(\mathbb R^n)$, and so does $\mathcal E_\alpha(\mathbf
A):= \mathfrak M(\mathbf A)\cap\mathcal E_\alpha(\mathbb R^n)$.

As a preparation for a study of minimum (standard) $\alpha$-Riesz
energy problems over subclasses of $\mathcal E_\alpha(\mathbf A)$,
$\mathbf A$ being a generalized condenser in $\mathbb R^n$, one
considered in a recent paper \cite{DFHSZ} the $\alpha$-Green kernel
$g=g_D^\alpha$ on $D$, associated with the $\alpha$-Riesz kernel
$\kappa_\alpha$ \cite[Chapter~IV, Section~5]{L}. It is claimed in
\cite[Lemma~2.4]{DFHSZ} that if a bounded positive Radon measure
$\nu$ on $D$ has finite $\alpha$-Green energy $E_g(\nu)$, defined by
(\ref{ren}) with $g$ in place of $\kappa_\alpha$, then the (signed)
Radon measure $\nu-\nu'$ on $\mathbb R^n$, $\nu'$ being the
$\alpha$-Riesz swept measure of $\nu$ onto $D^c$ \cite[Chapter~IV,
Section~5]{L}, must have finite (standard) $\alpha$-Riesz energy.
Regrettably, the short proof of Lemma~2.4 in \cite{DFHSZ} was
incomplete (a matter of $\infty-\infty$), and actually the lemma
fails in general, as seen by the counterexample given in
\cite[Appendix]{DFHSZ2}. To be precise, the quoted example shows
that there is a bounded positive Radon measure $\nu$ on $D$ with
finite $E_g(\nu)$ such that $E_\alpha(\nu-\nu')$ is not well
defined.

Below we argue that this failure is an indication that the standard notion of $\alpha$-Riesz energy of
signed measures is too restrictive when dealing with condenser problems.\footnote{Except for the case of a
standard condenser $\mathbf A$ in $\mathbb R^n$ with nonzero Euclidean distance between $A_1$ and $A_2$,
see Remarks~\ref{remark}, \ref{rem2} and Section~\ref{sec-5.3} below.}

The above mentioned error led to the fact that some of the
assertions announced in \cite{DFHSZ} had a gap in their proofs, and
actually they fail in general, as will be seen from
Example~\ref{counterex2}. Below we show that, nevertheless, these
assertions become valid (even in a stronger form) if we replace the
standard concept of $\alpha$-Riesz energy by a weaker concept (e.g.\
by that of Deny--Schwartz energy of measures treated as tempered
distributions).

As shown in \cite[Theorem~5.1]{FZ-Pot}, \cite[Lemma~2.4]{DFHSZ} quoted above does hold if we replace the
standard concept of $\alpha$-Riesz energy $E_\alpha(\mu)$ of a (signed) Radon measure $\mu$ on $\mathbb R^n$
by a weaker concept, denoted ${\dot E}_\alpha(\mu)$ and defined essentially
(see \cite[Definition~4.1]{FZ-Pot}) by
\[{\dot E}_\alpha(\mu)=\int(\kappa_{\alpha/2}\mu)^2\,dm,\]
$\kappa_{\alpha/2}\mu(\cdot):=\int\kappa_{\alpha/2}(\cdot,y)\,d\mu(y)$ being the pot\-en\-tial of $\mu$
relative to the $\alpha/2$-Riesz kernel. (Here and in the sequel $m$ denotes the Lebesgue measure on
$\mathbb R^n$.) Moreover, the concept of weak $\alpha$-Riesz energy, serving as a main tool of our analysis
in \cite{FZ-Pot}, enabled us to rectify partly the results announced in \cite{DFHSZ}
(see \cite[Section~6]{FZ-Pot}, see also Remark~\ref{rem-FZ-Pot} below for a short survey;
compare with a further development of \cite[Section~6]{FZ-Pot} achieved in the present study).

The set $\dot{\mathcal E}_\alpha(\mathbb R^n)$ of (signed) Radon measures $\mu$ on $\mathbb R^n$ with
finite ${\dot E}_\alpha(\mu)$, or equivalently with $\kappa_{\alpha/2}\mu\in L^2(m)$, forms a pre-Hilbert
space with the (weak) inner product $\langle\mu,\nu\rangle^\cdot_\alpha:=
\langle\kappa_{\alpha/2}\mu,\kappa_{\alpha/2}\nu\rangle_{L^2(m)}$,
$\mu,\nu\in\dot{\mathcal E}_\alpha(\mathbb R^n)$,
and the (weak energy) norm $\|\mu\|^{\cdot}_\alpha:=
\sqrt{{\dot E}_\alpha(\mu)}=\|\kappa_{\alpha/2}\mu\|_{L^2(m)}$ \cite[Section~4]{FZ-Pot}.
The Riesz composition identity \cite{R}
implies that
\[\mathcal E^+_\alpha(\mathbb R^n)=\dot{\mathcal E}^+_\alpha(\mathbb R^n)\text{ \ and \ }
\mathcal E_\alpha(\mathbb R^n)\subset\dot{\mathcal E}_\alpha(\mathbb R^n),\]
where $\dot{\mathcal E}^+_\alpha(\mathbb R^n):=\dot{\mathcal E}_\alpha(\mathbb R^n)\cap
\mathfrak M^+(\mathbb R^n)$; and moreover
\begin{equation}\label{eq}
E_\alpha(\nu)=\dot{E}_\alpha(\nu)\text{ \ for any\ }\nu\in\mathcal
E_\alpha(\mathbb R^n).\end{equation} However, as seen from
\cite[Appendix]{DFHSZ2} and \cite[Theorem~5.1]{FZ-Pot}, there exists
a (signed) measure $\mu\in\dot{\mathcal E}_\alpha(\mathbb R^n)$ such
that $E_\alpha(\mu)$ is not well defined, and hence
$\mu\notin\mathcal E_\alpha(\mathbb R^n)$. Thus $\mathcal
E_\alpha(\mathbb R^n)$ forms a \textsl{proper\/} subset of
$\dot{\mathcal E}_\alpha(\mathbb R^n)$, which by
\cite[Theorem~4.1]{FZ-Pot} is \textsl{dense\/} in $\dot{\mathcal
E}_\alpha(\mathbb R^n)$ in the topology determined by the weak
energy norm as well as in the (induced) vague topology.

Let $S_\alpha^*$ denote the Hilbert space of all real-valued
tempered distributions $T\in S^*$ on $\mathbb R^n$ (see \cite{S})
with finite Deny--Schwartz energy
\[\|T\|_{S_\alpha^*}^2:=\int_{\mathbb
R^n}\frac{\left|\mathcal F[T](y)\right|^2}{|y|^\alpha}\,dm(y),\]
where $\mathcal F[T]$ is the Fourier transform of $T\in S^*$. We refer to \cite{D1,D2}
(see also \cite[Chapter~VI, Section~1]{L}) for the definition of $\|T\|_{S_\alpha^*}^2$ as well as
the properties of the space $S_\alpha^*$.
As shown in \cite[Theorem~4.2]{FZ-Pot}, every $\nu\in\dot{\mathcal E}_\alpha(\mathbb R^n)$ can be treated as
an element of $S_\alpha^*$ with
\[\|\nu\|_{S_\alpha^*}^2=C_{n,\alpha}\|\nu\|^{\cdot\,2}_\alpha,\]
$C_{n,\alpha}\in(0,\infty)$ depending on $n$ and $\alpha$ only, and
moreover $S_\alpha^*$ is a completion of the pre-Hil\-bert space
$\dot{\mathcal E}_\alpha(\mathbb R^n)$ in the topology determined by
the Deny--Schwartz norm $\|\cdot\|_{S_\alpha^*}$. (This result for
$\mathcal E_\alpha(\mathbb R^n)$ in place of $\dot{\mathcal
E}_\alpha(\mathbb R^n)$ goes back to Deny \cite{D1}.)

Thus the concept of Deny--Schwartz energy coincides (up to a
constant factor) with that of weak $\alpha$-Riesz energy if
restricted to measures of the class $\dot{\mathcal E}_\alpha(\mathbb
R^n)$. It is however still unknown whether these two concepts are
identical if considered over \textsl{all\/} (signed) Radon measures
on $\mathbb R^n$ (see an open question raised in \cite[p.~85]{D2}
and \cite[Section~7]{FZ-Pot}, the former work dealing with positive
measures only).

In the present paper we proceed further with a study of minimum weak
$\alpha$-Riesz, or equivalently Deny--Schwartz, energy problems for
a generalized condenser $\mathbf A$, initiated in \cite{FZ-Pot}.
Similarly as in \cite{DFHSZ}, the measures are now influenced
additionally by external fields $f$ and/or constraints $\sigma$ (see
Section~\ref{sec-statement} for the precise formulations of the
problems in question).

It is shown in Theorem~\ref{th-equiv} below that \cite[Lemma~4.2]{DFHSZ} on the equivalence of
\cite[Problem~3.1]{DFHSZ} and \cite[Problem~3.2]{DFHSZ} becomes valid if we require additionally that
the plates $A_1$ and $A_2=D^c$ of the condenser $\mathbf A$ satisfy the separation
condition\footnote{Because of the error in \cite[Lemma~2.4]{DFHSZ}, \cite[Lemma~4.2]{DFHSZ} was unjustified
in the stated form. Moreover, it fails in general, as will be seen from Example~\ref{counterex2}.}
\begin{equation}\label{dist}{\rm dist}\,(A_1,D^c):=\inf_{(x,y)\in A_1\times D^c}\,|x-y|>0.\end{equation}
(Note that such $\mathbf A$ is certainly standard.)
As seen from the quoted theorem, the class of measures admissible in \cite[Problem~3.1]{DFHSZ} can then
equivalently be defined as the set $\widetilde{\mathcal H}$ of $\mu-\mu'$ where $\mu$ ranges over certain
positive measures carried by $A_1$ and having finite $\alpha$-Green energy, while $\mu'$ is the
$\alpha$-Riesz balayage of $\mu$ onto $D^c$. Omitting now the separation condition (\ref{dist}), we see
from Theorem~\ref{lem-equ} below that the stated equivalence of \cite[Problem~3.1]{DFHSZ} and
\cite[Problem~3.2]{DFHSZ}, the former now being applied to $\widetilde{\mathcal H}$ serving as a (new)
class of admissible measures, remains valid if the standard $\alpha$-Riesz energy is replaced by the
weak energy.

Sufficient and/or necessary conditions for the solvability of the minimum weak $\alpha$-Riesz energy
problems are provided in Theorems~\ref{main-th}, \ref{infcap} and Corollary~\ref{cor-n-s}. See also
Theorems~\ref{desc-th1}, \ref{cor-desc}, \ref{desc-infty}, and \ref{cor-infty-2} where such criteria are
formulated in terms of variational inequalities for the $\alpha$-Riesz potentials. An analysis of the
supports of the minimizers is given by Theorem~\ref{desc-sup}, \ref{cor-desc}, and \ref{cor-infty-3}.
The results obtained are in fact stronger than those announced in \cite{DFHSZ} since a crucial key to
our current proofs is the perfectness of the $\alpha$-Green kernel, established in our recent study
\cite{FZ-Fin}. As an application of the quoted results, we describe the support of the $\alpha$-Green
equilibrium measure (see Theorem~\ref{eq-m-desc}).

Example~\ref{counterex2} shows that the above mentioned assertions in general fail if the weak $\alpha$-Riesz
energy is replaced by the standard energy. This justifies the need for the concept of weak
$\alpha$-Riesz energy when dealing with condenser problems.

\begin{rem}The generalized condensers $\mathbf A=(A_1,D^c)$ such that the \textsl{unconstrained\/}
minimum weak $\alpha$-Riesz energy problems are solvable differ
drastically from those for which the solvability occurs in the
\textsl{constrained\/} setting. Indeed, if $f=0$ and $\sigma=\infty$
(\textsl{no\/} external field and \textsl{no\/} constraint), then
the solvability of the problems implies that $c_\alpha(\partial
D\cap\mathrm{Cl}_{\mathbb R^n}A_1)=0$, $c_\alpha(\cdot)$ being the
$\alpha$-Riesz capacity.\footnote{Throughout the paper $\partial Q$
denotes the boundary of a set $Q\subset\mathbb R^n$ relative to
$\mathbb R^n$.} But if the constraint in question is
\textsl{bounded\/}, then the problems turn out to be solvable even
if $A_1=D$; see Remark~\ref{dr-dif} below for details.
\end{rem}

\begin{rem}The results announced in \cite{DFHSZ} have also been rectified in part in a recent work
\cite{DFHSZ2}. However, minimum (standard) $\alpha$-Riesz energy
problems for a generalized condenser $\mathbf A$ were analyzed in
\cite{DFHSZ2} only in the constrained setting, and the constraints
were required to be of finite (standard) $\alpha$-Riesz energy,
which was crucial for the proofs in \cite{DFHSZ2}. The concept of
weak $\alpha$-Riesz energy, serving as a main tool of our present
study, enables us to rectify the results announced in \cite{DFHSZ}
in both the unconstrained and constrained settings, and the
constraints in question are no longer required to satisfy any
additional assumptions.\end{rem}

\section{Preliminaries}\label{sec:princ}

Let $X$ be a locally compact (Hausdorff) space \cite[Chapter~I,
Section~9, n$^\circ$\,7]{B1}, to be specified below. For the goals
of the present study it is enough to assume that $X$ is metrizable
and \textsl{countable at infinity\/}, where the latter means that
$X$ can be represented as a countable union of compact sets
\cite[Chapter~I, Section~9, n$^\circ$\,9]{B1}. Then the vague
topology\footnote{We shall tacitly assume to be known this and other
notions defined for $X=\mathbb R^n$ in the Introduction.} on
$\mathfrak M(X)$ satisfies the first axiom of countability
\cite[Remark~2.5]{DFHSZ1}, and vague convergence is entirely
determined by convergence of sequences. The vague topology on
$\mathfrak M(X)$ is Hausdorff, and hence a vague limit of any
sequence in $\mathfrak M(X)$ is \textsl{unique\/} (whenever it
exists). We denote by $S^\mu_{X}=S(\mu)$ the \textsl{support\/} of
$\mu\in\mathfrak M(X)$. A measure $\mu$ is said to be
\textsl{bounded\/} if $|\mu|(X)<\infty$. Let $\mathfrak M^+(X)$
stand for the (convex, vaguely closed) cone of all positive
$\mu\in\mathfrak M(X)$.

Given a set $Q\subset X$, let $\mathfrak M^+(Q;X)$ consist of all
$\mu\in\mathfrak M^+(X)$ \textsl{carried by\/} $Q$, which means that
$X\setminus Q$ is locally $\mu$-negligible, or equivalently that $Q$
is $\mu$-meas\-ur\-able and $\mu=\mu|_Q$, where $\mu|_Q=1_Q\cdot\mu$
is the trace (restriction) of $\mu$ on $Q$ \cite[Chapter~V,
Section~5, n$^\circ$\,3, Example]{B2}. (Here $1_Q$ denotes the
indicator function of $Q$.) If $Q$ is closed, then $\mu$ is carried
by $Q$ if and only if it is supported by $Q$, i.e.\ $S(\mu)\subset
Q$. It follows from the countability of $X$ at infinity that the
concept of local $\mu$-neg\-lig\-ibility coincides with that of
$\mu$-negligibility; and hence $\mu\in\mathfrak M^+(Q;X)$ if and
only if $\mu^*(X\setminus Q)=0$, $\mu^*(\cdot)$ being the
\textsl{outer measure\/} of a set. Denoting by $\mu_*(\cdot)$ the
\textsl{inner measure\/} of a set, for any $\mu\in\mathfrak
M^+(Q;X)$ we thus get
\[\mu^*(Q)=\mu_*(Q)=:\mu(Q).\]
Write $\mathfrak M^+(Q,q;X):=\bigl\{\mu\in\mathfrak M^+(Q;X):\ \mu(Q)=q\bigr\}$, where $q>0$.

The following well known fact (see e.g.\ \cite[Section~1.1]{F1}) will often be used.

\begin{lem}\label{lemma-semi}Let\/ $\psi$ be a lower semicontinuous\/ {\rm(}l.s.c.\/{\rm)} function
on\/ $X$, nonnegative unless\/ $X$ is compact. The mapping\/
$\mu\mapsto\langle\psi,\mu\rangle:=\int\psi\,d\mu$ is then vaguely
l.s.c.\ on\/ $\mathfrak M^+(X)$.\end{lem}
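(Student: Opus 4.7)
The plan is to exhibit $\mu\mapsto\langle\psi,\mu\rangle$ as the pointwise supremum, over $\mu\in\mathfrak{M}^+(X)$, of a family of vaguely continuous functionals; since any supremum of l.s.c.\ functions is l.s.c., this will give the claim.

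First I would dispose of the case where $X$ is compact. Every l.s.c.\ function on a compact space is bounded below, so one can write $\psi=\tilde\psi-c$ with $c\in[0,\infty)$ and $\tilde\psi\geqslant 0$ l.s.c. When $X$ is compact the constant function $1_X$ lies in $C_0(X)$, so $\mu\mapsto c\,\mu(X)=\int c\,d\mu$ is already vaguely continuous; subtracting it reduces the claim to the case $\psi\geqslant 0$. Thus from now on I would assume $\psi\geqslant 0$ l.s.c.\ on the locally compact space $X$.

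Next I would invoke the standard description of nonnegative l.s.c.\ functions on a locally compact space: for $\psi\geqslant 0$ l.s.c., the set
\[\mathcal F:=\bigl\{\varphi\in C_0(X):\ 0\leqslant\varphi\leqslant\psi\bigr\}\]
is upward directed and $\psi(x)=\sup_{\varphi\in\mathcal F}\varphi(x)$ for every $x\in X$ (this is Bourbaki's characterization, used for instance in the definition of the upper integral of a positive l.s.c.\ function). The key analytic input is then that for each fixed $\mu\in\mathfrak M^+(X)$,
\[\langle\psi,\mu\rangle=\sup_{\varphi\in\mathcal F}\langle\varphi,\mu\rangle,\]
which follows from the definition of the integral $\int\psi\,d\mu$ of a positive l.s.c.\ function with respect to a positive Radon measure (equivalently, by monotone convergence applied to an increasing sequence in $\mathcal F$ extracted using the metrizability of $X$ together with countability at infinity, which guarantee $\sigma$-compactness of $\mathrm{supp}\,\mu$).

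Finally, for each $\varphi\in C_0(X)$ the functional $\mu\mapsto\langle\varphi,\mu\rangle$ is vaguely continuous on $\mathfrak M(X)$ by the very definition of the vague topology as the topology of pointwise convergence on $C_0(X)$; its restriction to $\mathfrak M^+(X)$ is therefore continuous. Hence $\mu\mapsto\langle\psi,\mu\rangle$ is the supremum over $\varphi\in\mathcal F$ of vaguely continuous functionals on $\mathfrak M^+(X)$, and so is vaguely l.s.c. The only place requiring any care is the identity $\langle\psi,\mu\rangle=\sup_{\varphi\in\mathcal F}\langle\varphi,\mu\rangle$; once that is granted the rest is immediate.
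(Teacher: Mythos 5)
Your proof is correct and follows essentially the same route as the source the paper cites (Fuglede, Acta Math.\ 103 (1960), Section~1.1): reduce to $\psi\geqslant0$, represent $\langle\psi,\mu\rangle$ as the supremum of the vaguely continuous functionals $\mu\mapsto\langle\varphi,\mu\rangle$ over $\varphi\in C_0(X)$ with $0\leqslant\varphi\leqslant\psi$, and conclude from the fact that a supremum of continuous functions is l.s.c. The paper itself does not reprove the lemma, so there is nothing further to compare; the one step you flag as delicate, namely $\langle\psi,\mu\rangle=\sup_{\varphi\in\mathcal F}\langle\varphi,\mu\rangle$, is exactly Bourbaki's definition of the integral of a nonnegative l.s.c.\ function against a positive Radon measure, so no extra argument is needed there (and your alternative via an increasing sequence and monotone convergence is also valid under the paper's standing metrizability and $\sigma$-compactness hypotheses).
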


A (function) \textsl{kernel\/} on $X$ is defined as a symmetric
l.s.c.\ function $\kappa:X\times X\to[0,\infty]$. Given
$\mu,\nu\in\mathfrak M(X)$, we denote by $E_\kappa(\mu,\nu)$ and
$\kappa\mu$ the (standard) \textsl{mutual energy\/} and the
\textsl{potential\/} relative to the kernel $\kappa$. Let $\mathcal
E_\kappa(X)$ consist of all $\mu\in\mathfrak M(X)$ whose (standard)
energy $E_\kappa(\mu)$ is finite, which means that
$E_\kappa(|\mu|)<\infty$, and let $\mathcal E^+_\kappa(X):=\mathcal
E_\kappa(X)\cap\mathfrak M^+(X)$.

\textsl{In all that follows we assume a kernel $\kappa$ to be
strictly positive definite\/}. Then $\mathcal E_\kappa(X)$ forms a
pre-Hil\-bert space with the (standard) inner product
$\langle\mu,\nu\rangle_\kappa:=E_\kappa(\mu,\nu)$,
$\mu,\nu\in\mathcal E_\kappa(X)$, and the (standard energy) norm
$\|\mu\|_\kappa:=\sqrt{E_\kappa(\mu)}$ (see \cite{F1}). The
(Hausdorff) topology on $\mathcal E_\kappa(X)$ determined by the
(standard energy) norm $\|\cdot\|_\kappa$ is termed
\textsl{strong\/}.

For a set $Q\subset X$ write $\mathcal E_\kappa^+(Q;X):=\mathcal
E_\kappa(X)\cap\mathfrak M^+(Q;X)$ and $\mathcal
E_\kappa^+(Q,q;X):=\mathcal E_\kappa(X)\cap\mathfrak M^+(Q,q;X)$,
where $q\in(0,\infty)$. The (\textsl{inner\/}) \textsl{capacity\/}
$c_\kappa(Q)$ of $Q$ relative to the kernel $\kappa$ is defined by
\begin{equation}\label{cap-def}c_\kappa(Q)^{-1}:=\inf_{\mu\in\mathcal
E_\kappa^+(Q,1;X)}\,\|\mu\|_\kappa^2\end{equation} (see e.g.\
\cite{F1,O}). Then $0\leqslant c_\kappa(Q)\leqslant\infty$. (Here
and in the sequel the infimum over the empty set is taken to be
$+\infty$. We also set $1\bigl/(+\infty)=0$ and $1\bigl/0=+\infty$.)

Because of the strict positive definiteness of the kernel $\kappa$,
$c_\kappa(K)<\infty$ for every compact set $K\subset X$.
Furthermore, by \cite[p.~153, Eq.~(2)]{F1},
\begin{equation}\label{compact}c_\kappa(Q)=\sup\,c_\kappa(K)\quad(K\subset Q, \ K\text{\ compact}).
\end{equation}

An assertion $\mathcal U(x)$ involving a variable point $x\in X$ is
said to hold \textsl{$c_\kappa$-ne\-ar\-ly everywhere\/}
(\textsl{$c_\kappa$-n.e.}) on $Q\subset X$ if $c_\kappa(N)=0$, where
$N$ consists of all $x\in Q$ for which $\mathcal U(x)$ fails. It is
often used that $c_\kappa(N)=0$ if and only if $\mu_*(N)=0$ for
every $\mu\in\mathcal E_\kappa^+(X)$ \cite[Lemma~2.3.1]{F1}. We
shall sometimes need also the concept of \textsl{$c_\kappa$-qua\-si
everywhere\/} (\textsl{$c_\kappa$-q.e.}) where the exceptional set
$N$ is supposed to have \textsl{outer\/} capacity zero. These two
concepts of negligibility coincide if the exceptional sets are
\textsl{capacitable\/} relative to the kernel~$\kappa$.

As in \cite[p.\ 134]{L}, we call a measure $\mu\in\mathfrak M(X)$
\textsl{$c_\kappa$-absolutely continuous\/} if $\mu(K)=0$ for every
compact set $K\subset X$ with $c_\kappa(K)=0$. It follows from
(\ref{compact}) that for such $\mu$, $|\mu|_*(Q)=0$ for every
$Q\subset X$ with $c_\kappa(Q)=0$. Hence, every $\mu\in\mathcal
E_\kappa(X)$ is $c_\kappa$-ab\-sol\-utely continuous; but not
conversely \cite[pp.~134--135]{L}.

\begin{defn}\label{def-perf}Following~\cite{F1}, we call a (strictly positive definite)
kernel $\kappa$ \textsl{perfect\/} if every strong Cauchy sequence
in $\mathcal E_\kappa^+(X)$ converges strongly to any of its vague
cluster points\footnote{It follows from Theorem~\ref{fu-complete}
that for a perfect kernel such a vague cluster point exists and is
unique.}.\end{defn}

\begin{rem}\label{ex-perf} On $X=\mathbb R^n$, $n\geqslant3$, the $\alpha$-Riesz kernel
$\kappa_\alpha(x,y)=|x-y|^{\alpha-n}$, $\alpha\in(0,n)$, is strictly
positive definite and moreover perfect \cite{D1,D2}; thus so is the
Newtonian kernel $\kappa_2(x,y)=|x-y|^{2-n}$ \cite{Ca}. Recently it
has been shown by the present authors that if $X$ is an open set $D$
in $\mathbb R^n$, $n\geqslant3$, and $g^\alpha_D$, $\alpha\in(0,2]$,
is the $\alpha$-Green kernel on $D$ \cite[Chapter~IV, Section~5]{L},
then $\kappa=g^\alpha_D$ likewise is strictly positive definite and
moreover perfect \cite[Theorems~4.9, 4.11]{FZ-Fin}.\end{rem}

\begin{thm} [{\rm see \cite{F1}}]\label{fu-complete} If a kernel\/ $\kappa$ is perfect, then
the cone\/ $\mathcal E_\kappa^+(X)$ is strongly complete and the
strong topology on\/ $\mathcal E_\kappa^+(X)$ is finer than the\/
{\rm(}induced\/{\rm)} vague topology on\/ $\mathcal
E_\kappa^+(X)$.\end{thm}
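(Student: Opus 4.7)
The plan is to combine a vague compactness argument for strong Cauchy sequences with the very definition of perfectness. The key intermediate fact I would establish is: every strong Cauchy sequence $(\mu_n)$ in $\mathcal E_\kappa^+(X)$ is vaguely relatively compact, and each of its vague cluster points $\mu_\infty$ lies in $\mathcal E_\kappa^+(X)$. Granting this, both conclusions of the theorem follow at once. For strong completeness, take any vague cluster point $\mu_\infty\in\mathcal E_\kappa^+(X)$ of the strong Cauchy sequence and invoke Definition~\ref{def-perf} to conclude $\mu_n\to\mu_\infty$ strongly. For the second assertion, if $\mu_n\to\mu$ strongly then $(\mu_n)$ is strong Cauchy, and by the above it admits a vague cluster point $\mu_\infty$ to which it converges strongly; uniqueness of strong limits forces $\mu_\infty=\mu$, so $\mu$ is the only vague cluster point, and combined with vague relative compactness this gives $\mu_n\to\mu$ vaguely.

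The real work is in proving the intermediate fact. Since $(\mu_n)$ is strong Cauchy, $M:=\sup_n\|\mu_n\|_\kappa<\infty$. To get vague relative compactness it suffices to show $\sup_n\mu_n(K)<\infty$ for every compact $K\subset X$ and then invoke the standard compactness theorem for positive Radon measures on the locally compact space $X$. If $c_\kappa(K)=0$, then $\mu_n(K)=0$ by the $c_\kappa$-absolute continuity of finite-energy measures. If $c_\kappa(K)>0$, I would use the trace $\mu_n|_K$, which is a positive measure carried by $K$ and, by the obvious monotonicity of the energy integral on positive measures, satisfies $\|\mu_n|_K\|_\kappa\leqslant\|\mu_n\|_\kappa\leqslant M$. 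Applying the definition (\ref{cap-def}) to the normalized trace $\mu_n|_K/\mu_n(K)\in\mathcal E_\kappa^+(K,1;X)$ (when $\mu_n(K)>0$) then yields
\[
\mu_n(K)^2\leqslant c_\kappa(K)\,\|\mu_n|_K\|_\kappa^2\leqslant c_\kappa(K)\,M^2<\infty,
\]
where $c_\kappa(K)<\infty$ by strict positive definiteness of $\kappa$.

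To see that any vague cluster point $\mu_\infty$ lies in $\mathcal E_\kappa^+(X)$, pass to a subsequence converging vaguely to $\mu_\infty$. Then $\mu_n\otimes\mu_n\to\mu_\infty\otimes\mu_\infty$ vaguely on $X\times X$ (verified first on product test functions $\phi_1\otimes\phi_2$ with $\phi_i\in C_0(X)$ and extended by Stone--Weierstrass using the vague boundedness of $(\mu_n)$ on each compact set). Applying Lemma~\ref{lemma-semi} to the l.s.c.\ kernel $\kappa$ on $X\times X$ gives
\[
E_\kappa(\mu_\infty)\leqslant\liminf_n E_\kappa(\mu_n)\leqslant M^2<\infty,
\]
so indeed $\mu_\infty\in\mathcal E_\kappa^+(X)$. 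The main technical obstacle is the vague compactness step together with the lower semicontinuity of energy under vague convergence of product measures; both rely only on strict positive definiteness of $\kappa$ and the general theory of Radon measures, while the perfectness hypothesis enters only at the very end, upgrading vague subsequential convergence to strong convergence of the entire Cauchy sequence.
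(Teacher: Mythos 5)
Your proof is correct, and it follows the same line of argument as Fuglede's original proof in \cite{F1} (the paper itself gives no proof of this theorem but simply cites \cite{F1}). The three ingredients you isolate are exactly the standard ones: (1) a strong Cauchy sequence $(\mu_n)$ in $\mathcal E_\kappa^+(X)$ is vaguely bounded on compacta — your capacity estimate $\mu_n(K)^2\leqslant c_\kappa(K)\,\|\mu_n|_K\|_\kappa^2\leqslant c_\kappa(K)M^2$ is the right way to get this, using monotonicity of $E_\kappa$ on positive measures for a nonnegative kernel and the fact that $c_\kappa(K)<\infty$ by strict positive definiteness — hence $(\mu_n)$ is vaguely relatively compact by the usual criterion for $\mathfrak M^+(X)$; (2) any vague cluster point $\mu_\infty$ has $E_\kappa(\mu_\infty)\leqslant\liminf E_\kappa(\mu_n)\leqslant M^2$ by vague lower semicontinuity of energy (Lemma~\ref{lemma-semi} applied on $X\times X$ together with vague convergence of product measures), so $\mu_\infty\in\mathcal E_\kappa^+(X)$; and (3) Definition~\ref{def-perf} then upgrades to strong convergence $\mu_n\to\mu_\infty$. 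The deduction that the strong topology is finer than the vague one, by identifying the strong limit as the unique vague cluster point of a relatively compact sequence, is also standard and correct, using that the vague topology is Hausdorff and first countable in the present setting. One small remark: Definition~\ref{def-perf}, as worded, already presupposes that a vague cluster point of a strong Cauchy sequence in $\mathcal E_\kappa^+(X)$ lies in $\mathcal E_\kappa^+(X)$ (otherwise strong convergence to it would be meaningless), so your explicit verification of $\mu_\infty\in\mathcal E_\kappa^+(X)$ could in principle be regarded as built into the definition; including it explicitly as you did, however, makes the proof self-contained and is the reading Fuglede takes.
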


\begin{rem}In contrast to Theorem~\ref{fu-complete}, for a perfect kernel $\kappa$ the whole pre-Hilbert
space $\mathcal E_\kappa(X)$ is in general strongly
\textsl{incomplete\/}, and this is the case even for the
$\alpha$-Riesz kernel of order $\alpha\in(1,n)$ on $\mathbb R^n$,
$n\geqslant 3$ (see \cite{Ca}); compare with the following
Remark~\ref{remark}.\end{rem}

\begin{rem}\label{remark} The concept of perfect kernel is an efficient tool in minimum energy problems
over classes of \textsl{positive\/} Radon measures with finite
energy. Indeed, if $Q\subset X$ is closed and $\kappa$ is perfect,
then problem (\ref{cap-def}) has a (unique) solution
$\mu_{Q,\kappa}$ if and only if $0<c_\kappa(Q)<\infty$
\cite[Theorem~4.1]{F1}; such $\mu_{Q,\kappa}$ is termed the
(\textsl{inner\/}) \textsl{$\kappa$-cap\-ac\-itary measure\/} on
$Q$. Later the concept of perfectness has been shown to be efficient
also in minimum (standard) energy problems over classes of
(\textsl{signed\/}) measures associated with a \textsl{standard
condenser\/} (see \cite{ZPot1}--\cite{ZPot2}; see also the earlier
study \cite{ZR} pertaining to the $\alpha$-Riesz kernel on $\mathbb
R^n$). The approach developed in \cite{ZPot1}--\cite{ZPot2}
substantially used \textsl{the assumption of the boundedness of the
kernel on the product of the oppositely charged plates of a
condenser\/},\footnote{For any classical kernel $\kappa$ on $\mathbb
R^n$ the quoted assumption of the boundedness of $\kappa$ on the
product of the oppositely charged plates is equivalent to the
separation condition~(\ref{dist}).} which made it possible to extend
Cartan's proof \cite{Ca} of the strong completeness of the cone
$\mathcal E_{\kappa_2}^+(\mathbb R^n)$ of all \textsl{positive\/}
measures on $\mathbb R^n$ with finite Newtonian energy to an
arbitrary perfect kernel $\kappa$ on a locally compact space $X$ and
suitable classes of (\textsl{signed\/}) measures $\mu\in\mathcal
E_\kappa(X)$. In turn, this strong completeness theorem for metric
subspaces of signed $\mu\in\mathcal E_\kappa(X)$ made it possible to
develop a fairly general theory of standard condensers, actually
even with \textsl{countably many\/} plates.
\end{rem}

A set $Q\subset X$ is said to be \textsl{locally closed\/} in $X$ if
for every $x\in Q$ there is a neighborhood $V$ of $x$ in $X$ such
that $V\cap Q$ is a closed subset of the subspace $Q$
\cite[Chapter~I, Section~3, Definition~2]{B1}, or equivalently if
$Q$ is the intersection of an open and a closed subset of $X$
\cite[Chapter~I, Section~3, Proposition~5]{B1}. The latter implies
that this $Q$ is universally measurable, and hence $\mathfrak
M^+(Q;X)$ consists of all the restrictions $\mu|_Q$ where $\mu$
ranges over $\mathfrak M^+(X)$. On the other hand, by
\cite[Chapter~I, Section~9, Proposition~13]{B1} a locally closed set
$Q$ itself can be thought of as a locally compact subspace of $X$.
Thus $\mathfrak M^+(Q;X)$ consists, in fact, of all those
$\nu\in\mathfrak M^+(Q)$ for each of which there is
$\hat{\nu}\in\mathfrak M^+(X)$ with the property
\begin{equation}\label{extend}\hat{\nu}(\varphi)=\int\varphi|_{Q}\,d\nu\text{ \ for every\ }
\varphi\in C_0(X).\end{equation} We say that such $\hat{\nu}$
\textsl{extends\/} $\nu\in\mathfrak M^+(Q)$ by $0$ off $Q$ to all of
$X$. A sufficient condition for (\ref{extend}) to hold is that $\nu$
be bounded.

\section{$\alpha$-Riesz balayage and $\alpha$-Green kernel}\label{sec-bala} In the rest of the paper
fix $n\geqslant3$, $\alpha\in(0,2]$ and a domain $D\subset\mathbb
R^n$ with $c_{\kappa_\alpha}(D^c)>0$, and assume that either
$\kappa=\kappa_\alpha$ is the \textsl{$\alpha$-Riesz kernel\/} on
$X=\mathbb R^n$, or $\kappa=g_D^\alpha$ is the
\textsl{$\alpha$-Green kernel\/} on $X=D$. We simply write $\alpha$
instead of $\kappa_\alpha$ if $\kappa_\alpha$ serves as an index,
and we use the short form `n.e.' instead of `$c_\alpha$-n.e.' if
this will not cause any mis\-under\-standing.

When speaking of a positive Radon measure $\mu$ on $\mathbb R^n$, we always tacitly assume that for
the given $\alpha$, $\kappa_\alpha\mu$ is not identically infinite. This implies that
\begin{equation}\label{1.3.10}\int_{|y|>1}\,\frac{d\mu(y)}{|y|^{n-\alpha}}<\infty\end{equation}
(see \cite[Eq.~(1.3.10)]{L}), and consequently that
$\kappa_\alpha\mu$ is finite ($c_\alpha$-)n.e.\ on $\mathbb R^n$
\cite[Chap\-ter~III, Section~1]{L}; these two implications can
actually be reversed.

\begin{defn} A (signed) measure $\nu\in\mathfrak M(D)$ is termed \textsl{extendible\/} if there are
$\widehat{\nu^+}$ and $\widehat{\nu^-}$ extending $\nu^+$ and
$\nu^-$, respectively, by $0$ off $D$ to all of $\mathbb R^n$, see
(\ref{extend}), and if these $\widehat{\nu^+}$ and $\widehat{\nu^-}$
satisfy the general convention (\ref{1.3.10}). We identify this
$\nu\in\mathfrak M(D)$ with its extension
$\hat\nu:=\widehat{\nu^+}-\widehat{\nu^-}$, and we therefore write
$\hat\nu=\nu$.\end{defn}

Every bounded measure $\nu\in\mathfrak M(D)$ is extendible. The converse holds if $D$ is bounded,
but not in general (e.g.\ not if $D^c$ is compact). The set of all extendible measures consists of all
the restrictions $\mu|_D$ where $\mu$ ranges over $\mathfrak M(\mathbb R^n)$, see the end of
Section~\ref{sec:princ}. Also note that for any extendible measure $\nu\in\mathfrak M(D)$,
$\kappa_{\alpha}\nu$ is finite n.e.\ on $\mathbb R^n$, for $\kappa_\alpha\nu^\pm$ is so.

The \textsl{$\alpha$-Green kernel\/} $g=g_D^\alpha$ on $D$ is
defined by
\[g^\alpha_D(x,y):=\kappa_\alpha\varepsilon_y(x)-
\kappa_\alpha\varepsilon_y^{D^c}(x)\text{ \ for all\ }x,y\in D,\]
where $\varepsilon_y$ denotes the unit Dirac measure at a point $y$
and $\varepsilon_y^{D^c}$ its \textsl{$\alpha$-Riesz balayage\/}
(sweeping) onto the (closed) set $D^c$, determined uniquely in the
frame of the classical approach by \cite[Theorem~3.6]{FZ-Fin}
pertaining to positive Radon measures on $\mathbb R^n$. See also the
book by Bliedtner and Hansen \cite{BH} where balayage is studied in
the setting of balayage spaces.

We shall simply write $\mu'$ instead of $\mu^{D^c}$ when speaking of
the $\alpha$-Riesz balayage of $\mu\in\mathfrak M^+(D;\mathbb R^n)$
onto $D^c$. According to \cite[Corollaries~3.19, 3.20]{FZ-Fin}, for
any $\mu\in\mathfrak M^+(D;\mathbb R^n)$ \textsl{the balayage\/
$\mu'$ is\/ $c_\alpha$-ab\-sol\-ut\-ely continuous and it is
determined uniquely by the relation
\begin{equation}\label{bal-eq}\kappa_\alpha\mu'=\kappa_\alpha\mu\text{ \ n.e.\ on\ }D^c\end{equation}
among the\/ $c_\alpha$-absolutely continuous measures supported
by\/} $D^c$. Furthermore, there holds the integral representation
\cite[Theorem~3.17]{FZ-Fin}\footnote{In the literature the integral
representation (\ref{int-repr}) seems to have been more or less
taken for granted, though it has been pointed out in
\cite[Chapter~V, Section~3, n$^\circ$\,1]{B2} that it requires that
the family $(\varepsilon_y')_{y\in D}$ be
\textsl{$\mu$-ad\-equ\-ate\/} in the sense of \cite[Chapter~V,
Section~3, Definition~1]{B2}; see also counterexamples (without
$\mu$-ad\-equ\-acy) in Exercises~1 and~2 at the end of that section.
A proof of this adequacy has therefore been given in
\cite[Lemma~3.16]{FZ-Fin}.}
\begin{equation}\label{int-repr}\mu'=\int\varepsilon_y'\,d\mu(y).\end{equation}
If moreover\/ $\mu\in\mathcal E_\alpha^+(D;\mathbb R^n)$, then the
balayage $\mu'$ is in fact \textsl{the orthogonal projection of\/
$\mu$ onto the convex cone\/ $\mathcal E^+_\alpha(D^c;\mathbb R^n)$}
\cite[Theorem~3.1]{FZ-Fin}, i.e.\ $\mu'\in\mathcal
E^+_\alpha(D^c;\mathbb R^n)$ and
\begin{equation}\label{proj}\|\mu-\theta\|_\alpha>\|\mu-\mu'\|_\alpha\text{ \ for all \ }
\theta\in\mathcal E^+_\alpha(D^c;\mathbb R^n), \ \theta\ne\mu'.\end{equation}

If now $\nu\in\mathfrak M(D)$ is an extendible (signed) measure,
then $\nu':=\nu^{D^c}:=(\nu^+)'-(\nu^-)'$ is said to be a
\textsl{balayage\/} of $\nu$ onto $D^c$. It follows from
\cite[p.~178, Remark]{L} that the balayage $\nu'$ is determined
uniquely by (\ref{bal-eq}) with $\nu$ in place of $\mu$ among the
$c_\alpha$-ab\-sol\-ut\-ely continuous (signed) measures supported
by~$D^c$.

\begin{defn}[{\rm see \cite[Theorem~VII.13]{Brelo2}}] A closed set $Q\subset\mathbb R^n$ is said to be
\textsl{$\alpha$-thin at infinity\/} if either $Q$ is compact, or
the inverse of $Q$ relative to $S(0,1):=\bigl\{x\in\mathbb R^n:
|x|=1\bigr\}$ has $x=0$ as an $\alpha$-irregular boundary point
(cf.\ \cite[Theorem~5.10]{L}).\end{defn}

\begin{rem}\label{rem-thin}Any closed set $Q$ that is not $\alpha$-thin at infinity is of infinite
capacity $c_\alpha(Q)$. Indeed, by the Wiener criterion of $\alpha$-regularity, $Q$ is not $\alpha$-thin
at infinity if and only if
\[\sum_{k\in\mathbb N}\,\frac{c_\alpha(Q_k)}{q^{k(n-\alpha)}}=\infty,\]
where $q>1$ and $Q_k:=Q\cap\bigl\{x\in\mathbb R^n: q^k\leqslant|x|<q^{k+1}\bigr\}$, while by
\cite[Lemma~5.5]{L} $c_\alpha(Q)<\infty$ is equivalent to the relation
\[\sum_{k\in\mathbb N}\,c_\alpha(Q_k)<\infty.\]
These observations also imply that the converse is not true, i.e.\
there is $Q$ with $c_\alpha(Q)=\infty$, but $\alpha$-thin at
infinity (see also \cite[pp.~276--277]{Ca2}).\end{rem}

\begin{exmp}[{\rm see \cite[Example~5.3]{ZPot1}}] Let $n=3$ and $\alpha=2$. Define the rotation body
\begin{equation}\label{descr}Q_\varrho:=\bigl\{x\in\mathbb R^3: \
0\leqslant x_1<\infty, \ x_2^2+x_3^2\leqslant\varrho^2(x_1)\bigr\},\end{equation}
where $\varrho$ is given by one of the following three formulae:
\begin{align}
\label{c1}\varrho(x_1)&=x_1^{-s}\text{ \ with \ }s\in[0,\infty),\\
\label{c2}\varrho(x_1)&=\exp(-x_1^s)\text{ \ with \ }s\in(0,1],\\
\label{c3}\varrho(x_1)&=\exp(-x_1^s)\text{ \ with \ }s\in(1,\infty).
\end{align}
Then $Q_\varrho$ is not $2$-thin at infinity if $\varrho$ is defined
by (\ref{c1}), $Q_\varrho$ is $2$-thin at infinity but has infinite
Newtonian capacity if $\varrho$ is given  by (\ref{c2}), and finally
$c_2(Q_\varrho)<\infty$ if (\ref{c3}) holds.
\end{exmp}

\begin{thm} [{\rm see \cite[Theorem~3.22]{FZ-Fin}}]\label{bal-mass-th} The set\/ $D^c$ is not\/
$\alpha$-thin at infinity if and only if for every bounded measure\/
$\mu\in\mathfrak M^+(D)$ we have $\mu'(\mathbb R^n)=\mu(\mathbb
R^n)$.\footnote{In general, $\nu^{D^c}(\mathbb
R^n)\leqslant\nu(\mathbb R^n)$ for every $\nu\in\mathfrak
M^+(\mathbb R^n)$ \cite[Theorem~3.11]{FZ-Fin}.}
\end{thm}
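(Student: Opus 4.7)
My plan is to reduce the theorem to a pointwise claim about swept Dirac measures via the integral representation (\ref{int-repr}). For any bounded $\mu\in\mathfrak M^+(D)$, Fubini's theorem gives
\[\mu'(\mathbb R^n)=\int\varepsilon_y'(\mathbb R^n)\,d\mu(y),\]
and the footnoted universal bound yields $\varepsilon_y'(\mathbb R^n)\leqslant 1$ for every $y\in D$. Hence $\mu'(\mathbb R^n)=\mu(\mathbb R^n)$ for all bounded $\mu\in\mathfrak M^+(D)$ is equivalent to $\varepsilon_y'(\mathbb R^n)=1$ for every $y\in D$, and the task reduces to an analysis of individual swept Dirac measures.

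Next I would read off $\varepsilon_y'(\mathbb R^n)$ from the asymptotic behaviour of its potential at infinity. Since $\varepsilon_y'$ is bounded and supported by $D^c$, dominated convergence applied to $|x|^{n-\alpha}\kappa_\alpha\varepsilon_y'(x)$ yields
\[\kappa_\alpha\varepsilon_y'(x)=\varepsilon_y'(\mathbb R^n)\,|x|^{\alpha-n}(1+o(1))\quad\text{as }|x|\to\infty,\]
while $\kappa_\alpha\varepsilon_y(x)=|x-y|^{\alpha-n}=|x|^{\alpha-n}(1+o(1))$. By the defining relation (\ref{bal-eq}) and the domination principle for balayage, the non-negative function $u(x):=|x-y|^{\alpha-n}-\kappa_\alpha\varepsilon_y'(x)$ vanishes n.e.\ on $D^c$, is $\alpha$-harmonic in $D\setminus\{y\}$, and satisfies $u(x)\sim\bigl(1-\varepsilon_y'(\mathbb R^n)\bigr)|x|^{\alpha-n}$ as $|x|\to\infty$.

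The decisive step is to identify the vanishing of this leading asymptotic coefficient with $D^c$ not being $\alpha$-thin at infinity. I would use the Kelvin inversion $I$ about a unit sphere centred at a suitable point, which sends $\infty$ to a distinguished finite point $y^{\ast}$ and transforms $D^c$ into a closed set $Q$ whose $\alpha$-irregularity at $y^{\ast}$ is, by the very definition recalled just before the statement, equivalent to $D^c$ being $\alpha$-thin at infinity. The Kelvin transformation rule for the $\alpha$-Riesz kernel converts $u$ into a non-negative function on $\mathbb R^n$ that is $\alpha$-harmonic in a neighbourhood of $y^{\ast}$ and vanishes n.e.\ on $Q$, and whose value at $y^{\ast}$ is a positive multiple of $1-\varepsilon_y'(\mathbb R^n)$. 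By the characterization of $\alpha$-regularity, every such function vanishes at $y^{\ast}$ precisely when $y^{\ast}$ is $\alpha$-regular for $Q$, closing the equivalence.

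The main obstacle I expect is the precise transformation rule for the $\alpha$-Riesz kernel and for the $c_\alpha$-absolute continuity condition under Kelvin inversion in the fractional range $\alpha<2$. Specifically, one must verify that the weighted push-forward of $\varepsilon_y'$ coincides with the balayage of $\varepsilon_{I(y)}$ onto $Q$ (by checking (\ref{bal-eq}) with the Kelvin weight built in) and that the asymptotic coefficient of $u$ at infinity matches the value of the transformed function at $y^{\ast}$. Once those technicalities are dispatched, the theorem follows by simply reading off coefficients.
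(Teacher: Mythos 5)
Your reduction, via the integral representation (\ref{int-repr}) and the cited bound $\varepsilon_y'(\mathbb R^n)\leqslant1$, to the statement that $\varepsilon_y'(\mathbb R^n)=1$ for every $y\in D$ is sound.

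The asymptotic step, however, contains a genuine error. When $D^c$ is unbounded---the only case of interest, since a compact $D^c$ is by definition $\alpha$-thin at infinity---the family $z\mapsto|x|^{n-\alpha}|x-z|^{\alpha-n}$ is not dominated by a fixed $\varepsilon_y'$-integrable function as $|x|\to\infty$, because $D^c$ keeps supplying points $z$ arbitrarily close to the moving $x$. In fact the claimed asymptotic $\kappa_\alpha\varepsilon_y'(x)\sim\varepsilon_y'(\mathbb R^n)\,|x|^{\alpha-n}$ is simply false. Take $n=3$, $\alpha=2$, and let $D^c$ be the rotation body $Q_\varrho$ from (\ref{descr}) with $\varrho(x_1)=\exp(-x_1^s)$, $s\in(0,1]$, which is $2$-thin at infinity. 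By (\ref{bal-eq}) the harmonic functions $\kappa_2\varepsilon_y'$ and $\kappa_2\varepsilon_y=|\cdot-y|^{-1}$ agree n.e.\ on $D^c$, hence (agreeing a.e.\ and both harmonic there) agree everywhere on the interior of $Q_\varrho$. Along the axis $x=(R,0,0)$, $R\to\infty$, this forces $|x|\,\kappa_2\varepsilon_y'(x)\to1$, whereas the theorem asserts $\varepsilon_y'(\mathbb R^n)<1$ for such $D^c$. The ``leading coefficient at infinity'' depends on the direction of approach, so there is no pointwise asymptotic to match with anything, and the dominated-convergence step breaks down precisely in the case the theorem is trying to detect.

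The Kelvin transform, on the other hand, is the right tool and already does all the work---there is no asymptotic coefficient to identify, because the identity is exact. For a bounded $\nu\in\mathfrak M^+(\mathbb R^n)$ with $\nu(\{0\})=0$, the Kelvin transform $\nu^*$ of $\nu$ with respect to $S(0,1)$, namely $d\nu^*(w)=|w|^{n-\alpha}\,d(I_*\nu)(w)$, satisfies
\begin{equation*}
\kappa_\alpha\nu^*(0)=\int|w|^{\alpha-n}\,|w|^{n-\alpha}\,d(I_*\nu)(w)=\nu(\mathbb R^n),
\end{equation*}
an algebraic identity valid for every such $\nu$. Combining this with (a) the fact that Kelvin inversion intertwines $\alpha$-Riesz balayage onto $D^c$ with balayage onto $Q:=I(D^c)$ (which you correctly flag as requiring verification via (\ref{bal-eq}) with the weight built in), and (b) the defining property of $\alpha$-regular boundary points---the swept potential attains the original potential's value there, and not otherwise---one gets $\varepsilon_y'(\mathbb R^n)=\kappa_\alpha(\varepsilon_y^*)^Q(0)$, and the equivalence with regularity of $0$ for $Q$ follows directly. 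Two small points still need care: the paper's definition of $\alpha$-thinness at infinity refers specifically to inversion in $S(0,1)$, so either keep the origin as the center or justify that the notion is independent of the choice of center; and $\varepsilon_y'(\{0\})=0$ holds because singletons are $\alpha$-polar and $\varepsilon_y'$ is $c_\alpha$-absolutely continuous. Since Theorem~\ref{bal-mass-th} is cited here from \cite{FZ-Fin} without proof, I cannot compare with the source's argument; but the asymptotic lemma you rely on should be dropped and replaced by the exact Kelvin identity.
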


\begin{thm}[{\rm see \cite[Theorem~4.12]{FZ-Fin}}]\label{th-equi}
For any relatively closed subset\/ $F$ of\/ $D$ with\/
$c_g(F)<\infty$ there exists a unique\/ $\alpha$-Green equilibrium
measure on\/ $F$, i.e.\ a measure\/
$\gamma_{F,g}=\gamma_F\in\mathcal E^+_g(F;D)$ such that\/
$\gamma_F(D)=\|\gamma_F\|_g^2=c_g(F)$ and\/\footnote{If $N$ is a
given subset of $D$, then $c_g(N)=0$ if and only if $c_\alpha(N)=0$
\cite[Lemma~2.6]{DFHSZ}. Thus any assertion involving a variable
point holds ($c_\alpha$-)n.e.\ on $Q\subset D$ if and only if it
holds $c_g$-n.e.\ on $Q$.\label{foot-ga}}
\begin{align}
\label{geq-1'}g\gamma_F&=1\text{ \ {\rm($c_\alpha$-)}n.e.\ on\ }F,\\
g\gamma_F&\leqslant1\text{ \ on\ }D.\notag
\end{align}
This\/ $\gamma_F$ is characterized uniquely within\/ $\mathcal
E^+_g(F;D)$ by\/ {\rm(\ref{geq-1'})}, and it is the\/
{\rm(}unique\/{\rm)} solution to the problem of minimizing\/
$E_g(\nu)$ over the {\rm(}convex\/{\rm)} class\/ $\Gamma_F$ of all\/
{\rm(}signed\/{\rm)} $\nu\in\mathcal E_g(D)$ with the property\/
$g\nu\geqslant1$ n.e.\ on\/ $F$. That is,
\begin{equation}\label{alt}c_g(F)=\|\gamma_F\|_g^2=\min_{\nu\in\Gamma_F}\,\|\nu\|_g^2.\end{equation}
If\/ $I_{F,\alpha}$ consists of all\/ $\alpha$-irregular points
of\/~$F$, then\/ {\rm(\ref{geq-1'})} can be specified as follows:
\begin{equation}\label{geq-2}g\gamma_F=1\text{ \ on \ }F\setminus I_{F,\alpha}.\end{equation}
\end{thm}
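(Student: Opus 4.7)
The plan is to exploit the perfectness of the $\alpha$-Green kernel $g = g_D^\alpha$ (Remark~\ref{ex-perf}) to produce $\gamma_F$ from the ($g$-)capacitary measure, via the classical existence theorem \cite[Theorem~4.1]{F1} quoted in Remark~\ref{remark}. If $c_g(F) = 0$, then $\mathcal{E}^+_g(F;D) = \{0\}$ (by the equivalence $c_g(N)=0 \Leftrightarrow \mu_*(N)=0$ for all $\mu\in\mathcal{E}^+_g(D)$ recalled in Section~\ref{sec:princ}), and the theorem holds trivially with $\gamma_F = 0$. If $0 < c_g(F) < \infty$, the quoted theorem applied to the closed subset $F$ of the locally compact space $D$ yields a unique $\lambda_F \in \mathcal{E}^+_g(F,1;D)$ with $\|\lambda_F\|_g^2 = c_g(F)^{-1}$. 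Setting $\gamma_F := c_g(F)\,\lambda_F$ I immediately get $\gamma_F(D) = c_g(F)$ and $\|\gamma_F\|_g^2 = c_g(F)$.

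Next I would establish (\ref{geq-1'}) by a Frostman-type perturbation. For any $\sigma \in \mathcal{E}^+_g(F,1;D)$, the convex combinations $(1-t)\lambda_F + t\sigma$ stay in $\mathcal{E}^+_g(F,1;D)$; expanding $\|\lambda_F + t(\sigma - \lambda_F)\|_g^2$, using the minimality of $\lambda_F$, and letting $t \downarrow 0$ yields $\int g\lambda_F\,d\sigma \geq c_g(F)^{-1}$. Since every $\mu \in \mathcal{E}^+_g(D)$ is $c_g$-absolutely continuous, a standard inner-capacity argument upgrades this to $g\lambda_F \geq c_g(F)^{-1}$ n.e.\ on $F$, i.e.\ $g\gamma_F \geq 1$ n.e.\ on $F$. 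Integrating against $\gamma_F$ (carried by $F$ and $c_g$-absolutely continuous) gives
\[c_g(F) = \|\gamma_F\|_g^2 = \int g\gamma_F\,d\gamma_F \geq \gamma_F(F) = c_g(F),\]
forcing $g\gamma_F = 1$ $\gamma_F$-a.e. The opposite bound $g\gamma_F \leq 1$ on all of $D$ then follows from the domination (Maria-Frostman) principle for the $\alpha$-Green kernel, a classical consequence of the corresponding Riesz-kernel principle and the representation $g\mu = \kappa_\alpha\mu - \kappa_\alpha\mu'$ via balayage.

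For uniqueness within $\mathcal{E}^+_g(F;D)$: if $\mu \in \mathcal{E}^+_g(F;D)$ likewise satisfies $g\mu = 1$ n.e.\ on $F$, then $g(\gamma_F - \mu) = 0$ n.e.\ on $F$, and since $\gamma_F - \mu$ is carried by $F$ and $c_g$-absolutely continuous, integration combined with strict positive definiteness of $g$ forces $\mu = \gamma_F$. For the extremal characterization (\ref{alt}), pick $\nu \in \Gamma_F$ and expand
\[E_g(\nu) - E_g(\gamma_F) = \|\nu - \gamma_F\|_g^2 + 2\bigl(\langle\nu, \gamma_F\rangle_g - \|\gamma_F\|_g^2\bigr);\]
since $\langle\nu, \gamma_F\rangle_g = \int g\nu\,d\gamma_F \geq \gamma_F(F) = c_g(F) = \|\gamma_F\|_g^2$ (using $g\nu \geq 1$ n.e.\ on $F \supset S(\gamma_F)$ together with the $c_g$-absolute continuity of $\gamma_F$), we conclude $E_g(\nu) \geq E_g(\gamma_F) = c_g(F)$, with equality iff $\nu = \gamma_F$.

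The pointwise refinement (\ref{geq-2}) at every $\alpha$-regular point $x_0 \in F$ is classical: the Wiener criterion for $\alpha$-regularity (cf.\ \cite[Theorem~5.10]{L}), together with the l.s.c.\ of $g\gamma_F$, the global bound $g\gamma_F \leq 1$ on $D$, and the n.e.\ equality $g\gamma_F = 1$ on $F$, forces $g\gamma_F(x_0) = 1$. The main obstacle I anticipate is not the Hilbert-space machinery (which is essentially automatic once perfectness is in hand) but the clean invocation of the two classical \emph{pointwise} tools — the Maria-Frostman domination principle and the Wiener/Kellogg criterion for $\alpha$-regular points — both of which lie outside the functional-analytic framework of the earlier sections and must be imported either from \cite{L} or derived from the $\alpha$-Riesz counterparts via the identity $g\mu = \kappa_\alpha\mu - \kappa_\alpha\mu'$.
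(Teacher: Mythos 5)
The paper does not prove Theorem~\ref{th-equi}; it is imported verbatim from \cite[Theorem~4.12]{FZ-Fin} (the ``{\rm see}''-tag signals this), so there is no internal argument to compare against. Your reconstruction follows the standard route that the cited reference almost certainly also takes: perfectness of $g=g_D^\alpha$ (Remark~\ref{ex-perf}) yields the $g$-capacitary measure via \cite[Theorem~4.1]{F1}, a Frostman perturbation over $\mathcal{E}^+_g(F,1;D)$ gives $g\gamma_F\geqslant1$ n.e.\ on $F$, integration against the $c_g$-absolutely continuous measure $\gamma_F$ pins down $g\gamma_F=1$ $\gamma_F$-a.e., and uniqueness, the extremal characterization~(\ref{alt}), and the degenerate case $c_g(F)=0$ all drop out of the Hilbert-space structure. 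Those portions are correct.

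The two places where the argument has genuine lacunae are exactly the ones you flag at the end, and they are more than clean-up work. For the global bound $g\gamma_F\leqslant1$ on $D$ you need a Maria--Frostman domination principle \emph{for the Green kernel}. Your reduction via $g\gamma_F=\kappa_\alpha\gamma_F-\kappa_\alpha\gamma_F'$ is the right idea, but to invoke the Riesz-kernel version (Landkof, Theorems~1.27/1.29) you must first know that $\gamma_F$ is extendible and satisfies the convention~(\ref{1.3.10}), and then you need the form of the principle with a constant plus a superharmonic majorant, for measures that need not be compactly supported nor of finite Riesz energy when $F$ touches $\partial D$; this is one of the nontrivial facts actually established in \cite{FZ-Fin}, not a one-line inheritance. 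Likewise the pointwise refinement~(\ref{geq-2}) at $\alpha$-regular points is a fine-potential-theoretic statement that has to be carried out for $g_D^\alpha$ rather than asserted by analogy with $\kappa_\alpha$. So your proof is structurally sound but quietly outsources two load-bearing Green-kernel principles to the same source the paper cites, which is precisely why the paper treats this theorem as external.
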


\begin{rem}\label{rem-eq-capacit}If\/ $F$ is a relatively closed subset of\/ $D$ with\/ $0<c_g(F)<\infty$,
then
\[\gamma_{F,g}=c_g(F)\mu_{F,g},\]
where $\mu_{F,g}$ is the (unique) $g$-capacitary measure on $F$
(which exists, see Remarks~\ref{ex-perf} and~\ref{remark}).
\end{rem}

\begin{cor}\label{lusin}If\/ $F$ is a relatively closed subset of\/ $D$ with\/ $c_g(F)<\infty$,
then\/ \[c_\alpha\bigl(\partial D\cap\mathrm{Cl}_{\mathbb
R^n}F\bigr)=0.\]
\end{cor}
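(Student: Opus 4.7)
The plan is to argue by contradiction using the $\alpha$-Green equilibrium measure from Theorem~\ref{th-equi}. Let $\gamma := \gamma_F \in \mathcal E_g^+(F;D)$ be as given there, so $\gamma$ is bounded with $\gamma(D) = c_g(F) < \infty$, $g\gamma \leqslant 1$ on $D$, and $g\gamma = 1$ on $F \setminus I_{F,\alpha}$. Since $\gamma$ is extendible, its $\alpha$-Riesz balayage $\gamma' := \gamma^{D^c}$ onto $D^c$ is $c_\alpha$-absolutely continuous with $\kappa_\alpha\gamma' = \kappa_\alpha\gamma$ n.e.\ on $D^c$, whence $g\gamma = \kappa_\alpha\gamma - \kappa_\alpha\gamma'$ on $D$. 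Suppose for contradiction that $c_\alpha(E) > 0$ with $E := \partial D \cap \mathrm{Cl}_{\mathbb R^n} F$. By (\ref{compact}) I pick a compact $K \subset E$ with $c_\alpha(K) > 0$, and by Remarks~\ref{ex-perf}, \ref{remark} let $\lambda_K \in \mathcal E^+_\alpha(K;\mathbb R^n)$ be the $\alpha$-Riesz capacitary measure on $K$, with $\lambda_K(K) > 0$, $\kappa_\alpha\lambda_K \leqslant 1$ on $\mathbb R^n$, and $\kappa_\alpha\lambda_K = 1$ n.e.\ on $K$.

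The key step is to exploit the boundary behavior of the bounded Green potential $g\gamma$: as $y \in D$ approaches an $\alpha$-regular boundary point $x_0 \in \partial D$ (whose complement in $\partial D$ is polar), one has $g\gamma(y) \to 0$, since $g(\cdot,z) \to 0$ at such $x_0$ for each $z \in D$ and $g\gamma$ is uniformly bounded. Since $c_\alpha(K) > 0$ exceeds the capacity of the set of irregular boundary points, such an $x_0$ can be chosen in $K$; and since $x_0 \in \mathrm{Cl}_{\mathbb R^n} F$, there is a sequence $y_n \in F$ with $y_n \to x_0$. If these $y_n$ can be chosen in $F \setminus I_{F,\alpha}$ for infinitely many $n$, then $g\gamma(y_n) = 1$ along a subsequence, contradicting $g\gamma(y_n) \to 0$.

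The main obstacle will be securing such an approach sequence outside $I_{F,\alpha}$: a priori $F \cap B(x_0, r)$ could lie entirely in the polar set $I_{F,\alpha}$ for arbitrarily small $r$, forcing $F$ to be locally $c_\alpha$-polar at $x_0$ (since every point of $F \cap B(x_0,r)$ would then be $\alpha$-irregular for that very set). To rule out this scenario on a $c_\alpha$-positive subset of $K$, I would use a Lindelöf covering argument to conclude that, under it, $F$ itself would be $c_\alpha$-polar in an open neighborhood $U$ of a $c_\alpha$-positive subset of $K$, so that $\gamma(U) = 0$ (as $\gamma$, having finite $g$-energy, does not charge polar sets). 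Integrating the balayage identity $\kappa_\alpha\gamma' = \kappa_\alpha\gamma$ n.e.\ on $D^c$ against $\lambda_K$ (using that $\lambda_K$ does not charge polar sets, that $\kappa_\alpha\lambda_K \leqslant 1$, and the mass bound $\gamma(D) = c_g(F) < \infty$) would then yield the needed contradiction with $\lambda_K(K) > 0$.
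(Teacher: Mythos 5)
Your overall strategy (use the Green equilibrium measure $\gamma=\gamma_F$, translate the identity $g\gamma=1$ on $F\setminus I_{F,\alpha}$ into an identity between $\kappa_\alpha\gamma$ and $\kappa_\alpha\gamma'$, and contrast this with the balayage identity $\kappa_\alpha\gamma=\kappa_\alpha\gamma'$ n.e.\ on $D^c$) is the right one, and is exactly what the paper does. But your mechanism for propagating the identity from $F$ up to the boundary $\partial D$ has a genuine gap.

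The key step you invoke is that $g\gamma(y)\to 0$ as $y\in D$ tends to an $\alpha$-regular boundary point $x_0$. You justify this by noting that $g(\cdot,z)\to 0$ at $x_0$ for each fixed $z\in D$ and that $g\gamma\leqslant 1$. But pointwise vanishing of each $g(\cdot,z)$ plus a uniform bound on the integral $g\gamma=\int g(\cdot,z)\,d\gamma(z)$ does not give $g\gamma(y)\to 0$: there is no dominating $\gamma$-integrable majorant of $g(y,\cdot)$ uniformly in $y$ near $x_0$, because $g(y,z)$ blows up as $z\to y$ and $\gamma$ may well charge every neighbourhood of $x_0$ (indeed $F$, hence possibly $S(\gamma)$, accumulates at $x_0$). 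Concretely, take $D$ the unit ball in $\mathbb R^3$, $\alpha=2$, $x_0\in\partial D$ regular, and $F=\bigcup_n B_n$ a union of disjoint closed balls in $D$ with radii $r_n\to 0$ shrinking fast enough that $\sum c_g(B_n)<\infty$, hence $c_g(F)<\infty$, and with centres $y_n\to x_0$. Then $g\gamma_F(y_n)=1$ for every $n$ (balls have no irregular points), so $g\gamma_F$ does not tend to $0$ along this sequence even though $x_0$ is regular. Thus the boundary-limit claim you rely on is simply false in the relevant generality, and the contradiction you hope for (``$g\gamma(y_n)=1$ yet $g\gamma(y_n)\to 0$'') does not materialise. (In this particular example $\partial D\cap\mathrm{Cl}_{\mathbb R^n}F=\{x_0\}$ is polar, so the corollary is not contradicted — but your argument would need the boundary limit to hold in exactly the situation where it fails.)

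Your contingency plan (the Lindel\"of covering argument to handle the case when $F$ is locally contained in $I_{F,\alpha}$ near $x_0$, followed by integrating the balayage identity against $\lambda_K$) is only sketched and addresses a secondary difficulty, not the one above. The paper's own proof avoids the boundary-limit question entirely: it applies Lusin's continuity theorem (\cite[Theorem~3.6]{L}) to each of $\kappa_\alpha\gamma$ and $\kappa_\alpha\gamma'$ to obtain an open $\Omega$ with $c_\alpha(\Omega)<\varepsilon$ off which both potentials are continuous (and into which $I_{F,\alpha}$ may be absorbed). Then $\kappa_\alpha\gamma=\kappa_\alpha\gamma'+1$ holds on $F\setminus\Omega$ by Lemma~\ref{l-hatg} and (\ref{geq-2}), and by the \emph{relative} continuity on $\mathbb R^n\setminus\Omega$ this identity passes to $(\mathrm{Cl}_{\mathbb R^n}F)\setminus\Omega$ — including the boundary portion — with no limit of $g\gamma$ ever taken. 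Comparing with $\kappa_\alpha\gamma=\kappa_\alpha\gamma'$ q.e.\ on $D^c$ and letting $\varepsilon\to 0$ finishes the proof. You should replace the boundary-limit mechanism by this Lusin-type argument (or supply a genuine justification for the boundary limit along $F$, which, as the example shows, would require extra hypotheses).
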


\begin{proof} According to Theorem~\ref{th-equi}, there is the $\alpha$-Green equilibrium measure
$\gamma=\gamma_F$ on $F$. By Lusin's type theorem
\cite[Theorem~3.6]{L} applied to each of $\kappa_\alpha\gamma$ and
$\kappa_\alpha\gamma'$, there exists for any $\varepsilon>0$ an open
set $\Omega\subset\mathbb R^n$ with $c_\alpha(\Omega)<\varepsilon$
such that $\kappa_\alpha\gamma$ and $\kappa_\alpha\gamma'$ are both
continuous relative to $\mathbb R^n\setminus\Omega$. Since there is
no loss of generality in assuming $I_{F,\alpha}\subset\Omega$, we
thus get from Lemma~\ref{l-hatg} below and (\ref{geq-2})
\[\kappa_\alpha\gamma=\kappa_\alpha\gamma'+1\text{ \ on \ }(\mathrm{Cl}_{\mathbb R^n}F)\setminus\Omega.\]
As $\varepsilon$ is arbitrary,
$\kappa_\alpha\gamma=\kappa_\alpha\gamma'+1$ $c_\alpha$-q.e.\ on
$\partial D\cap\mathrm{Cl}_{\mathbb R^n}F$. But
$\kappa_\alpha\gamma=\kappa_\alpha\gamma'$ holds $c_\alpha$-n.e.\ on
$D^c$ by (\ref{bal-eq}), hence $c_\alpha$-q.e.\ because
$\bigl\{\kappa_\alpha\gamma\ne\kappa_\alpha\gamma'\bigr\}$ is a
Borel set, and the corollary follows.\end{proof}

The following three known assertions establish relations between potentials and standard energies
relative to the kernels $\kappa_\alpha$ and $g=g^\alpha_D$.

\begin{lem}[{\rm see \cite[Lemma~3.4]{DFHSZ2}}]\label{l-hatg}  For any extendible\/ {\rm(}signed\/{\rm)}
measure\/ $\mu\in\mathfrak M(D)$ the $\alpha$-Green potential\/
$g\mu$ is finite\/ {\rm($c_\alpha$-)}n.e.\ on\/ $D$ and given by\/
$g\mu=\kappa_\alpha\mu-\kappa_\alpha\mu'$.
\end{lem}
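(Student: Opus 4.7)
The plan is to reduce to the case $\mu\geqslant0$ and then use the definition of the $\alpha$-Green kernel together with the integral representation (\ref{int-repr}) of the $\alpha$-Riesz balayage.

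First I would decompose $\mu=\mu^+-\mu^-$, where $\mu^\pm\in\mathfrak M^+(D)$ are extendible. By the general convention following (\ref{1.3.10}) applied to $\widehat{\mu^\pm}\in\mathfrak M^+(\mathbb R^n)$, the potentials $\kappa_\alpha\mu^\pm$ are finite n.e.\ on $\mathbb R^n$. Similarly, each balayage $(\mu^\pm)'$ is a $c_\alpha$-absolutely continuous positive measure on $\mathbb R^n$ whose potential is dominated by $\kappa_\alpha\mu^\pm$ (by the characterization (\ref{bal-eq}) and the $\alpha$-Riesz domination principle used implicitly in the classical theory of balayage), hence $\kappa_\alpha(\mu^\pm)'$ is likewise finite n.e.\ on $\mathbb R^n$. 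Therefore, off a set of inner $c_\alpha$-capacity zero, all four functions $\kappa_\alpha\mu^\pm$, $\kappa_\alpha(\mu^\pm)'$ take finite real values, and no $\infty-\infty$ obstruction arises.

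Next, fix $\mu\geqslant0$ extendible and $x\in D$ in the complement of the exceptional set above. By the very definition of the $\alpha$-Green kernel,
\[g\mu(x)=\int_D g(x,y)\,d\mu(y)=\int_D\bigl(\kappa_\alpha\varepsilon_y(x)-\kappa_\alpha\varepsilon_y'(x)\bigr)\,d\mu(y).\]
The first integrand contributes $\kappa_\alpha\mu(x)$, which is finite. For the second, I invoke the integral representation (\ref{int-repr}) $\mu'=\int\varepsilon_y'\,d\mu(y)$, together with the $\mu$-adequacy of the family $(\varepsilon_y')_{y\in D}$ already established in \cite[Lemma~3.16]{FZ-Fin}. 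Fubini's theorem in the form available for $\mu$-adequate families (\cite[Chapter~V, Section~3]{B2}) then yields
\[\int_D\kappa_\alpha\varepsilon_y'(x)\,d\mu(y)=\int_{\mathbb R^n}\kappa_\alpha(x,z)\,d\mu'(z)=\kappa_\alpha\mu'(x),\]
which is also finite at $x$. Subtracting, we conclude $g\mu(x)=\kappa_\alpha\mu(x)-\kappa_\alpha\mu'(x)$, and in particular $g\mu$ is finite n.e.\ on $D$.

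Finally, for the signed case I combine the identities for $\mu^+$ and $\mu^-$: since $g\mu=g\mu^+-g\mu^-$ and $\mu'=(\mu^+)'-(\mu^-)'$ by definition, rearranging gives $g\mu=\kappa_\alpha\mu-\kappa_\alpha\mu'$ n.e.\ on $D$, with both sides finite there. The only delicate step is the Fubini interchange in the positive case, which is exactly why the $\mu$-adequacy of $(\varepsilon_y')_{y\in D}$ recorded in the footnote after (\ref{int-repr}) is essential; this is the part I expect to be the main technical point, but it is already available from \cite[Lemma~3.16]{FZ-Fin}.
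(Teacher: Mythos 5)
Your proof is correct. Note that the paper itself does not prove this lemma but merely cites it from \cite[Lemma~3.4]{DFHSZ2}, so there is no in-paper argument to compare against; what you have written is precisely the natural proof that the citation points to: reduce to a positive extendible $\mu$, use (\ref{int-repr}) together with the $\mu$-adequacy of $(\varepsilon_y')_{y\in D}$ from \cite[Lemma~3.16]{FZ-Fin} to justify the Fubini interchange $\int_D\kappa_\alpha\varepsilon_y'(x)\,d\mu(y)=\kappa_\alpha\mu'(x)$, and then pass to the signed case noting that all four potentials $\kappa_\alpha\mu^\pm$, $\kappa_\alpha(\mu^\pm)'$ are finite off a Borel set of $c_\alpha$-capacity zero, so no $\infty-\infty$ arises. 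One small remark: you do not actually need to invoke the domination principle $\kappa_\alpha(\mu^\pm)'\leqslant\kappa_\alpha\mu^\pm$ as a prerequisite; once $\kappa_\alpha\mu^\pm(x)<\infty$, positivity of $g(x,\cdot)$ already forces $\int_D\kappa_\alpha\varepsilon_y'(x)\,d\mu^\pm(y)\leqslant\kappa_\alpha\mu^\pm(x)<\infty$, and adequacy then identifies this with $\kappa_\alpha(\mu^\pm)'(x)$ — but your version is also sound since the domination is standard.
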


\begin{lem} [{\rm see \cite[Lemma~3.5]{DFHSZ2}}]\label{l-hen'} Suppose that\/ $\mu\in\mathfrak M(D)$
is extendible and the extension belongs to\/ $\mathcal
E_\alpha(\mathbb R^n)$. Then\/ $\mu\in\mathcal E_g(D)$,
$\mu-\mu'\in\mathcal E_\alpha(\mathbb R^n)$ and moreover
\begin{equation}\label{gr}\|\mu\|^2_g=\|\mu-\mu'\|^2_\alpha=\|\mu\|^2_\alpha-\|\mu'\|^2_\alpha.\end{equation}
\end{lem}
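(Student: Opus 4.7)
The strategy is to reduce everything to the Jordan components of $\mu$, use the orthogonal projection characterization of $\alpha$-Riesz balayage, and finally invoke Lemma~\ref{l-hatg} to pass to the $\alpha$-Green energy. First, since $\mu\in\mathcal E_\alpha(\mathbb R^n)$ means $E_\alpha(|\mu|)<\infty$, both Jordan parts $\mu^\pm$ lie in $\mathcal E^+_\alpha(D;\mathbb R^n)$. By the projection property~(\ref{proj}), the balayages $(\mu^\pm)'$ lie in $\mathcal E^+_\alpha(D^c;\mathbb R^n)$; hence $\mu'=(\mu^+)'-(\mu^-)'\in\mathcal E_\alpha(\mathbb R^n)$ and therefore $\mu-\mu'\in\mathcal E_\alpha(\mathbb R^n)$ as well.

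The key identity to establish is $\langle\mu,\mu'\rangle_\alpha=\|\mu'\|_\alpha^2$. By bilinearity it is enough to prove, for arbitrary $\sigma,\tau\in\mathcal E^+_\alpha(D;\mathbb R^n)$, the symmetric relation $\langle\sigma,\tau'\rangle_\alpha=\langle\sigma',\tau'\rangle_\alpha$. This follows from the balayage equation~(\ref{bal-eq}), which gives $\kappa_\alpha\sigma=\kappa_\alpha\sigma'$ n.e.\ on $D^c$, combined with the fact (recorded after (\ref{bal-eq})) that $\tau'$ is $c_\alpha$-absolutely continuous and is carried by $D^c$; hence $\kappa_\alpha\sigma-\kappa_\alpha\sigma'=0$ holds $\tau'$-almost everywhere, and integration against $\tau'$ together with symmetry of $\kappa_\alpha$ yields the claim. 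Expanding the Hilbert norm and invoking this identity then gives
\[\|\mu-\mu'\|_\alpha^2=\|\mu\|_\alpha^2-2\langle\mu,\mu'\rangle_\alpha+\|\mu'\|_\alpha^2=\|\mu\|_\alpha^2-\|\mu'\|_\alpha^2.\]

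For the $\alpha$-Green part, Lemma~\ref{l-hatg} gives $g\mu^\pm=\kappa_\alpha\mu^\pm-\kappa_\alpha(\mu^\pm)'$ n.e.\ on $D$, from which each of the quantities $E_g(\mu^+)$, $E_g(\mu^-)$, and $E_g(\mu^+,\mu^-)$ can be expanded into finite $\alpha$-Riesz (mutual) energies of measures in $\mathcal E_\alpha(\mathbb R^n)$; this shows $\mu\in\mathcal E_g(D)$. Applying Fubini (justified by $E_g(|\mu|)<\infty$) to the identity $g\mu=\kappa_\alpha\mu-\kappa_\alpha\mu'$ then yields
\[\|\mu\|_g^2=\int g\mu\,d\mu=\|\mu\|_\alpha^2-\langle\mu,\mu'\rangle_\alpha=\|\mu\|_\alpha^2-\|\mu'\|_\alpha^2,\]
completing the chain of equalities~(\ref{gr}).

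The only delicate point is the $\tau'$-almost-everywhere equality in the second paragraph: one must be careful that the exceptional $c_\alpha$-null set in~(\ref{bal-eq}) is genuinely $\tau'$-negligible, which uses $\tau'\in\mathcal E^+_\alpha(\mathbb R^n)$ (hence $c_\alpha$-absolutely continuous) rather than merely $\tau'\in\mathfrak M^+(D^c;\mathbb R^n)$. Once that step is in place, the remainder is a routine bilinear expansion and an application of Fubini.
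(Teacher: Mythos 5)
Your proof is correct. Since the paper merely cites \cite[Lemma~3.5]{DFHSZ2} and gives no proof of its own, there is no internal argument to compare against, but the route you take — decompose into Jordan parts, deduce $(\mu^\pm)'\in\mathcal E_\alpha^+(D^c;\mathbb R^n)$ from (\ref{proj}), prove $\langle\sigma,\tau'\rangle_\alpha=\langle\sigma',\tau'\rangle_\alpha$ from (\ref{bal-eq}) plus $c_\alpha$-absolute continuity of $\tau'$, and then convert $E_g$ to $\alpha$-Riesz energies via Lemma~\ref{l-hatg} — is exactly the machinery the surrounding text makes available, and each step is sound. Two small remarks that would tighten the write-up: (i) when expanding $\langle\mu,\mu'\rangle_\alpha$ and $\|\mu\|_g^2$ bilinearly you should note explicitly that all four cross terms $\langle\mu^\pm,(\mu^\pm)'\rangle_\alpha$ are finite (by Cauchy--Schwarz, since all of $\mu^\pm,(\mu^\pm)'$ lie in $\mathcal E_\alpha^+(\mathbb R^n)$), so no $\infty-\infty$ arises; (ii) the orthogonality $\langle\sigma-\sigma',\sigma'\rangle_\alpha=0$ for $\sigma\in\mathcal E_\alpha^+(D;\mathbb R^n)$ also drops directly out of the projection characterization (\ref{proj}) onto the convex cone $\mathcal E_\alpha^+(D^c;\mathbb R^n)$ (take $\theta=0$ and $\theta=2\sigma'$), giving a slightly shorter route to your key identity, although your derivation from (\ref{bal-eq}) is equally valid and has the advantage of producing the full symmetric relation needed for the signed case in one stroke.
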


\begin{lem} [{\rm see \cite[Lemma~3.4]{FZ-Pot}}]\label{eq-r-g} Let\/ $A_1$ be a relatively closed subset
of\/ $D$ with the separation property\/ {\rm(\ref{dist})}. Then a
bounded measure\/ $\mu\in\mathfrak M^+(A_1;D)$ has finite\/
$E_g(\mu)$ if and only if its extension has finite standard
$\alpha$-Riesz energy, and in the affirmative case\/ {\rm(\ref{gr})}
holds. Furthermore, $c_g(A_1)<\infty$ if and only if\/
$c_\alpha(A_1)<\infty$.\end{lem}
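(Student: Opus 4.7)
For a bounded $\mu\in\mathfrak M^+(A_1;D)$, I will start from the pointwise identity $g\mu=\kappa_\alpha\mu-\kappa_\alpha\mu'$ furnished by Lemma~\ref{l-hatg} and integrate against $\mu$. Setting $\delta:={\rm dist}(A_1,D^c)>0$, the separation hypothesis together with the contractivity of balayage in mass (Theorem~\ref{bal-mass-th}) yields, for every $x\in A_1$,
\[\kappa_\alpha\mu'(x)=\int_{D^c}\frac{d\mu'(y)}{|x-y|^{n-\alpha}}\leqslant\delta^{\alpha-n}\mu'(\mathbb R^n)\leqslant\delta^{\alpha-n}\mu(\mathbb R^n),\]
so that the cross-term $\langle\mu,\mu'\rangle_\alpha=\int\kappa_\alpha\mu'\,d\mu$ is always bounded above by $\delta^{\alpha-n}\mu(\mathbb R^n)^2<\infty$. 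Thus $E_g(\mu)=E_\alpha(\mu)-\langle\mu,\mu'\rangle_\alpha$ is well defined (no $\infty-\infty$), and $E_g(\mu),E_\alpha(\mu)$ are simultaneously finite; in that case identity~(\ref{gr}) is delivered directly by Lemma~\ref{l-hen'}.

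\textbf{Part II, easy direction.} From $g\leqslant\kappa_\alpha$ on $D\times D$ I get $E_g(\mu)\leqslant E_\alpha(\mu)$ for every positive $\mu$ on $A_1$. By Part~I the admissible classes $\mathcal E_g^+(A_1,1;D)$ and $\mathcal E_\alpha^+(A_1,1;\mathbb R^n)$ coincide, so on taking infima I obtain $c_g(A_1)\geqslant c_\alpha(A_1)$; in particular $c_g(A_1)<\infty\Rightarrow c_\alpha(A_1)<\infty$.

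\textbf{Part II, converse.} The core difficulty is the implication $c_\alpha(A_1)<\infty\Rightarrow c_g(A_1)<\infty$, which I will attack by contradiction. Assume a sequence $\mu_m\in\mathcal E_g^+(A_1,1;D)$ with $\|\mu_m\|_g\to 0$. Part~I (with unit mass) gives the uniform bound $\|\mu_m\|_\alpha^2\leqslant\|\mu_m\|_g^2+\delta^{\alpha-n}$. By the perfectness of $g$ (Remark~\ref{ex-perf}) and Theorem~\ref{fu-complete}, the strongly Cauchy sequence $(\mu_m)$ converges strongly, hence vaguely on $D$, to $0$. Since the separation condition forces $A_1$ to be closed already in $\mathbb R^n$, this in turn forces $\mu_m(A_1\setminus B(0,R))\to 1$ for every fixed $R>0$. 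Applying the elementary inequality $\|\nu\|_\alpha^2\geqslant\nu(Q)^2/c_\alpha(Q)$ to the restriction $\nu=\mu_m|_{A_1\setminus B(0,R)}$ (noting that restriction decreases $\alpha$-energy) and letting $m\to\infty$ yields
\[\delta^{\alpha-n}\geqslant\frac{1}{c_\alpha(A_1\setminus B(0,R))}\qquad\text{for every }R>0.\]
The contradiction then comes from Remark~\ref{rem-thin}: $c_\alpha(A_1)<\infty$ translates via the Wiener series criterion into $\sum_k c_\alpha(A_{1,k})<\infty$, and countable subadditivity gives $c_\alpha(A_1\setminus B(0,R))\to 0$ as $R\to\infty$.

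\textbf{Main obstacle.} The hard part is the converse in Part~II: making the ``escape of mass'' step precise (vague convergence of a strong $g$-Cauchy sequence really yields $\mu_m(K)\to0$ for every compact $K\subset D$), and then tying it to the tail decay $c_\alpha(A_1\setminus B(0,R))\to0$ via the Wiener criterion. The separation hypothesis enters in two essential ways---once to make $A_1$ closed in $\mathbb R^n$ (legitimizing the use of Remark~\ref{rem-thin}), and once to turn $\kappa_\alpha\mu'$ into a uniformly bounded function on $A_1$ controlling the cross-term $\langle\mu,\mu'\rangle_\alpha$.
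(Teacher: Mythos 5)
The paper itself does not reproduce a proof of this lemma but refers to \cite[Lemma~3.4]{FZ-Pot}, so a line-by-line comparison is not possible; I therefore assess your argument on its own merits. Your proof is correct.

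For Part~I, the observation that the separation distance $\delta$ yields the uniform bound $\kappa_\alpha\mu'(x)\leqslant\delta^{\alpha-n}\mu'(\mathbb R^n)\leqslant\delta^{\alpha-n}\mu(\mathbb R^n)$ on $A_1$ (via Theorem~\ref{bal-mass-th}'s footnote on mass non-increase under balayage) is exactly what one needs to avoid $\infty-\infty$ in $E_g(\mu)=E_\alpha(\mu)-E_\alpha(\mu,\mu')$; the equivalence and identity~(\ref{gr}) then follow from Lemmas~\ref{l-hatg} and~\ref{l-hen'}. For Part~II the easy inequality $c_g\geqslant c_\alpha$ from $g\leqslant\kappa_\alpha$ together with the coincidence of the unit-mass admissible classes (Part~I) is fine. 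The hard implication $c_\alpha(A_1)<\infty\Rightarrow c_g(A_1)<\infty$ is handled by a nice escape-of-mass argument: perfectness of $g$ (Theorem~\ref{fu-complete}) forces a would-be minimizing sequence with $\|\mu_m\|_g\to0$ to converge vaguely to $0$, hence push all its unit mass past any fixed ball; the bound $\|\mu_m\|_\alpha^2\leqslant\|\mu_m\|_g^2+\delta^{\alpha-n}$ from Part~I caps the $\alpha$-energy, while the restriction inequality $\|\nu\|_\alpha^2\geqslant\nu(Q)^2/c_\alpha(Q)$ and the decay $c_\alpha(A_1\setminus B(0,R))\to0$ as $R\to\infty$ force that energy to blow up. Two small cosmetic points: (i) the decay of the tail capacity is not the Wiener criterion proper but the companion fact from \cite[Lemma~5.5]{L} quoted in Remark~\ref{rem-thin} (the unweighted series $\sum_k c_\alpha(A_{1,k})<\infty$), together with countable subadditivity of the inner capacity on Borel sets; and (ii) for bounded $A_1$ the escape-of-mass step immediately contradicts $\mu_m(A_1)=1$, so the Wiener/Landkof ingredient is only needed when $A_1$ is unbounded — but the argument handles both cases. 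The crucial role you give the separation hypothesis (making $A_1$ closed in $\mathbb R^n$ and bounding the cross-term uniformly) is correctly identified.
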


\section{Auxiliary results}\label{sec-aux}

In all that follows fix a generalized condenser $\mathbf A=(A_1,A_2)$ (see the Introduction).
Write
\[\mathfrak M(\mathbf A,\mathbf 1):=\bigl\{\mu\in\mathfrak M(\mathbf A): \ \mu^+(A_1)=\mu^-(A_2)=1\bigr\},\]
where $\mathbf 1:=(1,1)$. \textsl{To avoid trivialities, throughout
the paper we assume that}
\begin{equation}\label{nonzero}c_\alpha(A_i)>0\text{ \ \textsl{for\/} }i=1,2.\end{equation}
Then $\mathcal E_\alpha(\mathbf A,\mathbf 1):=\mathcal
E_\alpha(\mathbb R^n)\cap \mathfrak M(\mathbf A,\mathbf 1)$ is
nonempty in accordance with \cite[Lemma~2.3.1]{F1}, and hence so is
$\dot{\mathcal E}_\alpha(\mathbf A,\mathbf 1):=\dot{\mathcal
E}_\alpha(\mathbb R^n)\cap \mathfrak M(\mathbf A,\mathbf 1)$, see
(\ref{eq}). Note that in general $\mathcal E_\alpha(\mathbf
A,\mathbf 1)$ is a \textsl{proper\/} subset of $\dot{\mathcal
E}_\alpha(\mathbf A,\mathbf 1)$, which is seen from the
counterexample given in \cite[Appendix]{DFHSZ2} and
Theorems~\ref{bal-mass-th}, \ref{thm}; compare with the following
Lemma~\ref{l-st}.

\begin{lem}\label{l-st}If\/ $\mathbf A$ is a\/ {\rm(}standard\/{\rm)} condenser with the separation
property\/ {\rm(\ref{dist})}, then
\[\mathcal E_\alpha(\mathbf A,\mathbf 1)=\dot{\mathcal E}_\alpha(\mathbf A,\mathbf 1).\]
\end{lem}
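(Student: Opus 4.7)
The plan is to verify both inclusions; the forward inclusion $\mathcal E_\alpha(\mathbf A,\mathbf 1)\subset\dot{\mathcal E}_\alpha(\mathbf A,\mathbf 1)$ is immediate from the already-stated general relation $\mathcal E_\alpha(\mathbb R^n)\subset\dot{\mathcal E}_\alpha(\mathbb R^n)$, see~(\ref{eq}). So the real content is the reverse inclusion: given $\mu\in\dot{\mathcal E}_\alpha(\mathbf A,\mathbf 1)$, show $E_\alpha(|\mu|)<\infty$.

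First, exploit separation (\ref{dist}) to control the mutual energy. Writing $d:={\rm dist}(A_1,D^c)>0$ and using $\alpha-n<0$, for all $(y,z)\in A_1\times D^c$ one has $\kappa_\alpha(y,z)=|y-z|^{\alpha-n}\leqslant d^{\alpha-n}$, hence
\[E_\alpha(\mu^+,\mu^-)\leqslant d^{\alpha-n}\mu^+(A_1)\mu^-(A_2)=d^{\alpha-n}<\infty.\]
In particular $E_\alpha(\mu^+,\mu^-)$ is well defined and finite.

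Next, convert this into information about the individual weak energies of $\mu^\pm$ using the Riesz composition identity $\kappa_{\alpha/2}*\kappa_{\alpha/2}=C_{n,\alpha}\kappa_\alpha$. Since all integrands are nonnegative, Fubini--Tonelli gives
\[\int \kappa_{\alpha/2}\mu^+(x)\,\kappa_{\alpha/2}\mu^-(x)\,dm(x)=C_{n,\alpha}\,E_\alpha(\mu^+,\mu^-)<\infty.\]
Because $\kappa_{\alpha/2}\mu^\pm<\infty$ n.e.\ (hence $m$-a.e.), the decomposition $\kappa_{\alpha/2}\mu=\kappa_{\alpha/2}\mu^+-\kappa_{\alpha/2}\mu^-$ holds a.e., so squaring and integrating yields the pointwise identity
\[(\kappa_{\alpha/2}\mu^+)^2+(\kappa_{\alpha/2}\mu^-)^2=(\kappa_{\alpha/2}\mu)^2+2\,\kappa_{\alpha/2}\mu^+\cdot\kappa_{\alpha/2}\mu^-\text{ \ a.e.}\]
Integrating (legitimate because every term is nonnegative),
\[\int(\kappa_{\alpha/2}\mu^+)^2\,dm+\int(\kappa_{\alpha/2}\mu^-)^2\,dm=\dot E_\alpha(\mu)+2C_{n,\alpha}E_\alpha(\mu^+,\mu^-)<\infty.\]

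Consequently each $\kappa_{\alpha/2}\mu^\pm\in L^2(m)$, so $\mu^\pm\in\dot{\mathcal E}^+_\alpha(\mathbb R^n)=\mathcal E^+_\alpha(\mathbb R^n)$, i.e.\ $E_\alpha(\mu^\pm)<\infty$. Combined with the already established finiteness of $E_\alpha(\mu^+,\mu^-)$, this gives $E_\alpha(|\mu|)=E_\alpha(\mu^+)+E_\alpha(\mu^-)+2E_\alpha(\mu^+,\mu^-)<\infty$, so $\mu\in\mathcal E_\alpha(\mathbf A,\mathbf 1)$. The only step requiring any care is the justification of the Fubini rearrangement and of the almost-everywhere splitting of $\kappa_{\alpha/2}\mu$ into its positive and negative parts; both are routine given that potentials of positive extendible measures are finite n.e.\ and hence $m$-a.e.
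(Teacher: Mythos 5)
Your argument is correct and follows essentially the same route as the paper: bound $E_\alpha(\mu^+,\mu^-)$ using the separation condition, relate $\int\kappa_{\alpha/2}\mu^+\kappa_{\alpha/2}\mu^-\,dm$ to that mutual energy via the Riesz composition identity and Fubini, then exploit the pointwise identity $(\kappa_{\alpha/2}\mu^+)^2+(\kappa_{\alpha/2}\mu^-)^2=(\kappa_{\alpha/2}\mu)^2+2\kappa_{\alpha/2}\mu^+\kappa_{\alpha/2}\mu^-$ to conclude $\kappa_{\alpha/2}\mu^\pm\in L^2(m)$ and hence $\mu^\pm\in\mathcal E^+_\alpha(\mathbb R^n)$. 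The only cosmetic difference is that you carry an explicit constant $C_{n,\alpha}$ in the composition identity while the paper works with a normalization in which it equals $1$; this does not affect the argument.
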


\begin{proof}Fix $\mu\in\mathfrak M(\mathbf A,\mathbf 1)$. By the Riesz composition identity and Fubini's
theorem,
\begin{align*}\int\kappa_{\alpha/2}\mu^+\kappa_{\alpha/2}\mu^-\,dm
 &=\int\left(\int\kappa_{\alpha/2}(x,y)\,d\mu^+(y)\right)
 \left(\int\kappa_{\alpha/2}(x,z)\,d\mu^-(z)\right)\,dm(x)\\
 &=\int\left(\int|x-y|^{\alpha/2-n}|x-z|^{\alpha/2-n}\,dm(x)\right)\,d(\mu^+\otimes\mu^-)(y,z)\\
 &=\int\left(\int|x-y-z|^{\alpha/2-n}|x|^{\alpha/2-n}\,dm(x)\right)\,d(\mu^+\otimes\mu^-)(y,z)\\
 &=\int\kappa_\alpha(y,z)\,d(\mu^+\otimes\mu^-)(y,z)\leqslant
 \bigl[{\rm dist}\,(A_1,D^c)\bigr]^{\alpha-n}<\infty.
\end{align*}
Assuming now $\mu\in\dot{\mathcal E}_\alpha(\mathbf A,\mathbf 1)$, we thus obtain
\[\bigl(\kappa_{\alpha/2}\mu^+\bigr)^2+\bigl(\kappa_{\alpha/2}\mu^-\bigr)^2=
\bigl(\kappa_{\alpha/2}\mu\bigr)^2
+2\kappa_{\alpha/2}\mu^+\,\kappa_{\alpha/2}\mu^-\in L^1(m),\] which
yields $\kappa_{\alpha/2}\mu^+,\kappa_{\alpha/2}\mu^-\in L^2(m)$.
This means that $\mu^+,\mu^-\in\dot{\mathcal E}^+_\alpha(\mathbb
R^n)$ $\bigl(=\mathcal E^+_\alpha(\mathbb R^n)\bigr)$, and hence
$\mu\in\mathcal E_\alpha(\mathbf A,\mathbf 1)$. Since $\mathcal
E_\alpha(\mathbf A,\mathbf 1)\subset\dot{\mathcal E}_\alpha(\mathbf
A,\mathbf 1)$ by (\ref{eq}), the lemma follows.
\end{proof}

We shall also need the following two assertions, the former being known.

\begin{thm} [{\rm see \cite[Theorem~5.1]{FZ-Pot}}]\label{thm} Let\/ $\mu$ be an extendible\/
{\rm(}signed\/{\rm)} Radon measure on\/ $D$ with\/
$E_g(\mu)<\infty$. Then\/ $\mu-\mu'$ has finite weak\/
$\alpha$-Riesz energy\/ $\dot{E}_\alpha(\mu-\mu')$, and moreover
\begin{equation}\label{2.8} E_g(\mu)=\dot{E}_\alpha(\mu-\mu').
\end{equation}
\end{thm}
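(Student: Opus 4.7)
The plan is to reduce to positive measures and then pass to the limit along a monotone compact-support exhaustion for which Lemma~\ref{l-hen'} applies directly.

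Since $E_g(\mu)<\infty$ forces $E_g(|\mu|)<\infty$, both $\mu^+,\mu^-\in\mathcal E_g^+(D)$ and each is extendible. By polarization---using bilinearity of $E_g$ and of $\langle\cdot,\cdot\rangle_\alpha^\cdot$, together with linearity of balayage via the integral representation (\ref{int-repr})---it suffices to establish the stated identity for $\mu\in\mathcal E_g^+(D)$. Fix such $\mu$ and an exhaustion $K_1\subset K_2\subset\cdots$ of $D$ by compact sets, and set $\mu_k:=\mu|_{K_k}$. Because $g$ is l.s.c.\ and nonnegative, monotone convergence yields $E_g(\mu_k,\mu)\nearrow E_g(\mu)$ and $E_g(\mu_k)\nearrow E_g(\mu)$, so $\|\mu-\mu_k\|_g\to 0$.

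Each $\mu_k$ is bounded and $S(\mu_k)\subset K_k$, with $d_k:=\mathrm{dist}(K_k,D^c)>0$. Since $\mu_k'$ is carried by $D^c$ and $\mu_k'(\mathbb R^n)\leqslant \mu_k(\mathbb R^n)<\infty$, the potential $\kappa_\alpha\mu_k'$ is bounded on $K_k$ by $d_k^{\alpha-n}\mu_k(\mathbb R^n)$. Combined with $E_g(\mu_k)<\infty$ and the identity $\kappa_\alpha\mu_k=g\mu_k+\kappa_\alpha\mu_k'$ n.e.\ from Lemma~\ref{l-hatg}, integration against the $c_\alpha$-absolutely continuous measure $\mu_k$ gives $E_\alpha(\mu_k)=E_g(\mu_k)+E_\alpha(\mu_k,\mu_k')<\infty$. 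Thus $\mu_k\in\mathcal E_\alpha^+(\mathbb R^n)$, and Lemma~\ref{l-hen'} together with (\ref{eq}) delivers
\[
E_g(\mu_k)=\|\mu_k-\mu_k'\|_\alpha^2=\dot E_\alpha(\mu_k-\mu_k')=\|\kappa_{\alpha/2}(\mu_k-\mu_k')\|_{L^2(m)}^2.
\]

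Applied to the positive differences $\mu_k-\mu_j$ for $j\leqslant k$, the same identity shows that $\{\kappa_{\alpha/2}(\mu_k-\mu_k')\}$ is Cauchy in $L^2(m)$ with squared increments $E_g(\mu_k-\mu_j)\to 0$, hence converges to some $f\in L^2(m)$ (and along a subsequence $m$-a.e.). On the other hand, $\mu_k\nearrow\mu$ combined with the integral representation $\mu_k'=\int\varepsilon_y'\,d\mu_k(y)$ and monotone convergence gives $\mu_k'\nearrow\mu'$ set-wise, whence $\kappa_{\alpha/2}\mu_k\nearrow\kappa_{\alpha/2}\mu$ and $\kappa_{\alpha/2}\mu_k'\nearrow\kappa_{\alpha/2}\mu'$ pointwise. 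Both limiting potentials are finite n.e., hence $m$-a.e.\ on $\mathbb R^n$, so $\kappa_{\alpha/2}(\mu_k-\mu_k')\to\kappa_{\alpha/2}(\mu-\mu')$ $m$-a.e., forcing $f=\kappa_{\alpha/2}(\mu-\mu')$ in $L^2(m)$. Therefore $\dot E_\alpha(\mu-\mu')=\|f\|_{L^2(m)}^2=\lim_k E_g(\mu_k)=E_g(\mu)$. The delicate step is this last identification of the $L^2$-limit with $\kappa_{\alpha/2}(\mu-\mu')$: it hinges on the monotone convergence $\mu_k'\nearrow\mu'$, which in turn rests on the $\mu$-adequacy of the family $(\varepsilon_y')_{y\in D}$ that underlies the integral representation~(\ref{int-repr}).
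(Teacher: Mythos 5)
Your proof is correct, and the strategy — reduce to positive measures by polarization, approximate by the compactly supported traces $\mu|_{K_k}$ to which Lemma~\ref{l-hen'} applies, verify the $L^2(m)$-Cauchy property of $\kappa_{\alpha/2}(\mu_k-\mu_k')$ via $\|\mu_k-\mu_j\|_g\to0$, and identify the $L^2$ limit with $\kappa_{\alpha/2}(\mu-\mu')$ through monotone pointwise convergence — is exactly the sort of argument used elsewhere in this paper (compare the proof of Theorem~\ref{l-aux}, which uses the same exhaustion $\mu|_{K_j}$ and the same perfectness/strong-convergence machinery). Two small points worth making explicit, though neither is a gap: first, the polarization reduction requires the mixed identity $E_g(\nu_1,\nu_2)=\langle\nu_1-\nu_1',\nu_2-\nu_2'\rangle^\cdot_\alpha$ for positive $\nu_1,\nu_2$, which you obtain by applying the established positive case to $\nu_1+\nu_2$ and expanding, rather than by naively polarizing the target identity itself (the latter would be circular); second, in passing from finiteness n.e.\ of $\kappa_{\alpha/2}\mu$ and $\kappa_{\alpha/2}\mu'$ to finiteness $m$-a.e., one uses that $\alpha/2$-Riesz polar sets are Lebesgue-null and that $\mu'$ inherits the growth condition (\ref{1.3.10}) from $\mu$ because $\kappa_\alpha\mu'\leqslant\kappa_\alpha\mu$. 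You correctly flag the one genuinely delicate ingredient: the monotone convergence $\mu_k'\nearrow\mu'$ rests on the $\mu$-adequacy of $(\varepsilon_y')_{y\in D}$ underlying the integral representation (\ref{int-repr}), which the paper secures via \cite[Lemma~3.16]{FZ-Fin}.
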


\begin{thm}\label{l-aux} Assume that\/ $D^c$ is not\/ $\alpha$-thin at infinity. For any\/
$\mu\in\mathcal E^+_{g}(A_1,1;D)$ there is a sequence\/ $\{\mu
_j\}_{j\in\mathbb N}\in\mathcal E^+_\alpha(A_1,1;\mathbb R^n)$, each\/ $\mu_j$ being compactly supported
in\/ $D$, which with the notations\/ $\nu_j:=\mu_j-\mu_j'$ and\/ $\nu:=\mu-\mu'$ possesses the following
properties:
\begin{itemize}
\item[\rm (a)] $\nu\in\dot{\mathcal E}_\alpha(\mathbf A,\mathbf 1)$ and\/
$\{\nu_j\}_{j\in\mathbb N}\subset\mathcal E_\alpha(\mathbf A,\mathbf 1)$;
\item[\rm (b)] $\|\nu_j-\nu\|^\cdot_\alpha\to0$ as\/ $j\to\infty$;
\item[\rm (c)] $\nu_j^\pm\to\nu^\pm$ vaguely in\/ $\mathfrak M(\mathbb R^n)$ as\/ $j\to\infty$.
\end{itemize}
\end{thm}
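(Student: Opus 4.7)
The plan is to construct $\mu_j$ by restricting $\mu$ to an exhausting sequence of compact subsets of $D$ and renormalizing to total mass~$1$. To set up the target, observe that $\nu\in\dot{\mathcal E}_\alpha(\mathbf A,\mathbf 1)$: Theorem~\ref{thm} yields $\dot E_\alpha(\nu)=E_g(\mu)<\infty$; since $\mu$ is carried by $A_1\subset D$ and $\mu'$ by $D^c$, they are mutually singular, so $\nu^+=\mu$ and $\nu^-=\mu'$; and $\mu(\mathbb R^n)=\mu'(\mathbb R^n)=1$ by Theorem~\ref{bal-mass-th}, using the non-$\alpha$-thinness of $D^c$ at infinity.

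For the construction, let $\{K_j\}$ be an exhausting sequence of compact sets in the open set $D$, put $c_j:=\mu(K_j)$ and, for $j$ large enough that $c_j>0$, set $\mu_j:=c_j^{-1}\mu|_{K_j}$. Then $\mu_j$ is a positive measure of total mass~$1$ carried by the compact subset $A_1\cap K_j$ of $D$. Since this set is separated from $D^c$, Lemma~\ref{eq-r-g} combined with $E_g(\mu_j)\leqslant c_j^{-2}E_g(\mu)<\infty$ gives $E_\alpha(\mu_j)<\infty$, so $\mu_j\in\mathcal E_\alpha^+(A_1,1;\mathbb R^n)$. Lemma~\ref{l-hen'} then yields $\nu_j\in\mathcal E_\alpha(\mathbb R^n)$, and Theorem~\ref{bal-mass-th} applied to $\mu_j$ gives $\mu_j'(\mathbb R^n)=1$, so $\nu_j\in\mathcal E_\alpha(\mathbf A,\mathbf 1)$, which completes (a).

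For (b), dominated convergence applied to the finite positive Radon measure $g\cdot(\mu\otimes\mu)$ on $D\times D$ yields $E_g(\mu|_{D\setminus K_j})\to 0$, and combined with $c_j\to 1$ this produces $\|\mu_j-\mu\|_g\to 0$. The uniqueness clause of (\ref{bal-eq}) among $c_\alpha$-absolutely continuous measures on $D^c$ gives the linearity identity $(\mu_j-\mu)'=\mu_j'-\mu'$. Applying Theorem~\ref{thm} to the bounded extendible signed measure $\mu_j-\mu$, whose $g$-energy is finite, we then obtain
\[\|\nu_j-\nu\|^{\cdot\,2}_\alpha=\dot E_\alpha\bigl((\mu_j-\mu)-(\mu_j-\mu)'\bigr)=E_g(\mu_j-\mu)\to 0.\]

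The hard part is (c), since balayage is typically discontinuous under vague convergence. The vague convergence $\mu_j\to\mu$ in $\mathfrak M^+(\mathbb R^n)$ follows routinely from dominated convergence and $c_j\to 1$. For $\mu_j'\to\mu'$, note that $\{\mu_j'\}$ has uniformly bounded mass~$1$ and is therefore vaguely relatively compact. By (b), $\{\nu_j\}$ converges to $\nu$ in the Deny--Schwartz norm $\|\cdot\|_{S_\alpha^*}$, and the continuous embedding $S_\alpha^*\hookrightarrow S^*$ (obtained by Cauchy--Schwarz from Parseval's formula with weights $|y|^{\pm\alpha/2}$) gives $\langle\nu_j-\nu,\varphi\rangle\to 0$ for every $\varphi\in C_c^\infty(\mathbb R^n)\subset S$. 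Combined with $\langle\mu_j,\varphi\rangle\to\langle\mu,\varphi\rangle$ this forces $\langle\mu_j',\varphi\rangle\to\langle\mu',\varphi\rangle$ on $C_c^\infty(\mathbb R^n)$, so any vague cluster point of $\{\mu_j'\}$ agrees with $\mu'$ as a distribution, hence as a measure. Consequently $\mu_j'\to\mu'$ vaguely, proving~(c).
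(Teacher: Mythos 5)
Your proposal is correct, and for the construction of $\mu_j$ (exhaustion of $D$ by compacta, restriction, renormalization) it matches the paper's. But for parts~(b) and~(c) you take a genuinely different route.

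For~(b), you obtain $\|\mu|_{D\setminus K_j}\|_g\to 0$ directly by dominated convergence for the finite positive measure $g\cdot(\mu\otimes\mu)$ on $D\times D$, whereas the paper proves that $\{\mu|_{K_j}\}$ is a strong Cauchy sequence in $\mathcal E_g(D)$ via a monotonicity estimate and then invokes the \emph{perfectness} of the $\alpha$-Green kernel to identify the strong limit with $\mu$. Your argument is shorter and avoids perfectness at this point.

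For~(c), the paper derives $\mu_j'\to\mu'$ vaguely from the monotone structure $\tilde\mu_j'\uparrow\mu'$ established inside the proof of \cite[Theorem~3.6]{FZ-Fin}. You instead extract vague convergence of the sweepings from the norm convergence already proven in~(b): the identity $\|\nu\|_{S_\alpha^*}^2=C_{n,\alpha}\|\nu\|_\alpha^{\cdot\,2}$ plus the Cauchy--Schwarz/Parseval estimate $|\langle T,\varphi\rangle|\leqslant\|T\|_{S_\alpha^*}\bigl(\int|y|^\alpha|\mathcal F[\varphi](y)|^2\,dm\bigr)^{1/2}$ show that $\nu_j\to\nu$ in $S^*$, and subtracting the (routine) vague convergence $\mu_j\to\mu$ pins down the unique vague cluster point of the tight family $\{\mu_j'\}$ as~$\mu'$. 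This is an attractive alternative: it is self-contained within the machinery the paper has already introduced and does not refer to internal steps of an external proof. The price is that it leans on the $S_\alpha^*$-framework, but that is already established by \cite[Theorem~4.2]{FZ-Pot}, so the dependence is harmless. Both arguments are sound; yours makes the logical flow (b)~$\Rightarrow$~(c) more transparent.
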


\begin{proof}Applying Theorems~\ref{bal-mass-th} and \ref{thm} to the (bounded, and hence extendible)
measure $\mu$, we obtain $\nu:=\mu-\mu'\in\dot{\mathcal E}_\alpha(\mathbf A,\mathbf 1)$,
which is the former relation in~(a).

Choose an increasing sequence $\{K_j\}_{j\in\mathbb N}$ of compact sets with the union $D$ and
write $\tilde{\mu}_j:=\mu|_{K_j}$, where $\mu|_{K_j}$ is the trace of $\mu$ on $K_j$. It follows from
the definition of $\tilde{\mu}_j$ that $\kappa_\alpha\tilde{\mu}_j\uparrow\kappa_\alpha\mu$ pointwise
on $\mathbb R^n$ and also that the increasing sequence $\{\tilde{\mu}_j\}_{j\in\mathbb N}$ converges to
$\mu$ vaguely in $\mathfrak M(\mathbb R^n)$. We therefore see from the proof of
\cite[Theorem~3.6]{FZ-Fin} that $\{\tilde{\mu}'_j\}_{j\in\mathbb N}$ likewise is increasing and
converges vaguely to $\mu'$. Also write $\mu_j:=\tilde{\mu}_j/\tilde{\mu}_j(K_j)$, $j\in\mathbb N$.
Since $\tilde{\mu}_j(K_j)\uparrow\mu(A_1)=1$, we infer that $\mu_j\to\mu$ and $\mu_j'\to\mu'$ vaguely
in $\mathfrak M(\mathbb R^n)$, which is~(c).

Furthermore, $g\tilde{\mu}_j\uparrow g\mu$ pointwise on $D$, and also
\begin{equation}\label{upar}\lim_{j\to\infty}\,\|\tilde{\mu}_j\|_g=\|\mu\|_g<\infty.\end{equation}
The former relation implies that $\langle\tilde{\mu}_j,\tilde{\mu}_p\rangle_g
\geqslant\|\tilde{\mu}_p\|^2_g$ for all $j\geqslant p$, and hence
\[\|\tilde{\mu}_j-\tilde{\mu}_p\|^2_g\leqslant\|\tilde{\mu}_j\|^2_g-\|\tilde{\mu}_p\|^2_g,\]
which together with (\ref{upar}) proves that
$\{\tilde{\mu}_j\}_{j\in\mathbb N}$ is a strong Cauchy sequence in
the pre-Hil\-bert space $\mathcal E_g(D)$. Since the kernel $g$ is
perfect \cite[Theorem~4.11]{FZ-Fin}, we thus see by
Definition~\ref{def-perf} that $\tilde{\mu}_j\to\mu$ in $\mathcal
E_g(D)$ strongly, and consequently
\begin{equation}\label{limg}\lim_{j\to\infty}\,\|\mu_j-\mu\|_g=0.\end{equation}

Since $S^{\mu_j}_D$ is compact and $E_g(\mu_j)<\infty$, it follows from Theorem~\ref{bal-mass-th} and
Lemma~\ref{eq-r-g} that $\nu_j:=\mu_j-\mu_j'\in\mathcal E_\alpha(\mathbf A,\mathbf 1)$,
which is the latter relation in (a). Furthermore, we get from Theorem~\ref{thm}
\[\|\nu_j-\nu\|^\cdot_\alpha=\|(\mu_j-\mu)-(\mu_j'-\mu')\|^\cdot_\alpha=\|\mu_j-\mu\|_g,\]
which establishes (b) when combined with (\ref{limg}).
\end{proof}

\section{Statements of the problems}\label{sec-statement}

\subsection{Notations, permanent assumptions, historical remarks}\label{sec-5.1}Let $\mathfrak C(A_1)$
consist of all $\xi\in\mathfrak M^+(A_1;\mathbb R^n)$ with
$\xi(A_1)>1$; such $\xi$ will serve as (upper)
\textsl{constraints\/} for measures of the class $\mathfrak
M^+(A_1,1;\mathbb R^n)$. For any given $\xi\in\mathfrak C(A_1)$
write
\[\dot{\mathcal E}_\alpha^\xi(\mathbf A,\mathbf 1):=
\bigl\{\mu\in\dot{\mathcal E}_\alpha(\mathbf A,\mathbf 1): \
\mu^+\leqslant\xi\bigr\},\] where $\mu^+\leqslant\xi$ means that
$\xi-\mu^+\geqslant0$. To combine (whenever this is possible)
formulations related to minimum weak $\alpha$-Riesz problems in both
the unconstrained and constrained settings, write $\dot{\mathcal
E}_\alpha^\sigma(\mathbf A,\mathbf 1)$, $\sigma\in\mathfrak
C(A_1)\cup\{\infty\}$, where the formal notation $\sigma=\infty$
means that \textsl{no\/} upper constraint is imposed on the positive
parts of $\mu\in\dot{\mathcal E}_\alpha(\mathbf A,\mathbf 1)$, i.e.\
$\dot{\mathcal E}_\alpha^\infty(\mathbf A,\mathbf 1):=\dot{\mathcal
E}_\alpha(\mathbf A,\mathbf 1)$.

Fix $f:\mathbb R^n\to[-\infty,\infty]$, to be treated as an
\textsl{external field\/}, and let $\dot{\mathcal
E}^\sigma_{\alpha,f}(\mathbf A,\mathbf 1)$ consist of all
$\nu\in\dot{\mathcal E}^\sigma_\alpha(\mathbf A,\mathbf 1)$ such
that $f$ is $|\nu|$-int\-egr\-able. For such $\nu$ write $\langle
f,\nu\rangle:=\int f\,d\nu$ and
\begin{equation}\label{G}\dot{G}_{\alpha,f}(\nu):=\|\nu\|^{\cdot\,2}_\alpha+2\langle f,\nu\rangle.
\end{equation}
Note that if $\dot{\mathcal E}^\sigma_{\alpha,f}(\mathbf A,\mathbf 1)$ is nonempty
(sufficient conditions for this to hold will be provided below), then it forms a convex subcone
of the convex cone $\dot{\mathcal E}_\alpha(\mathbf A,\mathbf 1)$.

Fix a nonempty convex cone $\mathcal H\subset\dot{\mathcal E}^\sigma_{\alpha,f}(\mathbf A,\mathbf 1)$,
to be specified below. Then
\[\dot{G}_{\alpha,f}(\mathcal H):=\inf_{\nu\in\mathcal H}\,\dot{G}_{\alpha,f}(\nu)<\infty,\]
and hence the following minimum weak $\alpha$-Riesz energy problem,
to be referred to as the \textsl{$\mathcal H$-problem\/}, makes
sense: \textsl{does there exist\/ $\lambda_{\mathcal H}\in\mathcal
H$ with\/} $\dot{G}_{\alpha,f}(\lambda_{\mathcal
H})=\dot{G}_{\alpha,f}(\mathcal H)$?

\begin{lem}\label{unique}A solution\/ $\lambda_{\mathcal H}$ to the\/ $\mathcal H$-problem is unique\/
{\rm(}whenever it exists\/{\rm)}.\end{lem}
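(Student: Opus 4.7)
The plan is to exploit the pre-Hilbert structure of $\dot{\mathcal E}_\alpha(\mathbb R^n)$: the weak energy norm $\|\cdot\|^{\cdot}_\alpha$ comes from the inner product $\langle\mu,\nu\rangle^{\cdot}_\alpha=\langle\kappa_{\alpha/2}\mu,\kappa_{\alpha/2}\nu\rangle_{L^2(m)}$, so it satisfies the parallelogram identity. Together with the linearity of $\nu\mapsto\langle f,\nu\rangle$, this makes $\dot{G}_{\alpha,f}$ strictly convex on the convex cone $\mathcal H$, and uniqueness will follow by the standard convexity trick on the midpoint of two candidate minimizers.

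Concretely, assume $\lambda_1,\lambda_2\in\mathcal H$ both attain $\dot{G}_{\alpha,f}(\mathcal H)$, and form $\lambda_3:=(\lambda_1+\lambda_2)/2$. The parallelogram identity applied to $\lambda_1,\lambda_2\in\dot{\mathcal E}_\alpha(\mathbb R^n)$ gives
\[
\|\lambda_3\|^{\cdot\,2}_\alpha=\tfrac12\|\lambda_1\|^{\cdot\,2}_\alpha+\tfrac12\|\lambda_2\|^{\cdot\,2}_\alpha-\tfrac14\|\lambda_1-\lambda_2\|^{\cdot\,2}_\alpha,
\]
and combining this with $\langle f,\lambda_3\rangle=\tfrac12\langle f,\lambda_1\rangle+\tfrac12\langle f,\lambda_2\rangle$ yields
\[
\dot{G}_{\alpha,f}(\lambda_3)=\tfrac12\dot{G}_{\alpha,f}(\lambda_1)+\tfrac12\dot{G}_{\alpha,f}(\lambda_2)-\tfrac14\|\lambda_1-\lambda_2\|^{\cdot\,2}_\alpha=\dot{G}_{\alpha,f}(\mathcal H)-\tfrac14\|\lambda_1-\lambda_2\|^{\cdot\,2}_\alpha.
\]
Since $\dot{G}_{\alpha,f}(\lambda_3)\geqslant\dot{G}_{\alpha,f}(\mathcal H)$ by minimality, this forces $\|\lambda_1-\lambda_2\|^{\cdot\,2}_\alpha\leqslant 0$, and then $\lambda_1=\lambda_2$ because $\|\cdot\|^{\cdot}_\alpha$ is a genuine norm on $\dot{\mathcal E}_\alpha(\mathbb R^n)$.

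The one bit of bookkeeping that must be verified before invoking minimality is that $\lambda_3$ really belongs to $\mathcal H$. Convexity of the cone handles this once $\lambda_3$ is shown to lie in $\dot{\mathcal E}^\sigma_{\alpha,f}(\mathbf A,\mathbf 1)$. This is not a serious obstacle: since $A_1\subset D$ and $A_2=D^c$ are disjoint, the Hahn--Jordan decomposition of $\lambda_3$ is simply $\lambda_3^\pm=(\lambda_1^\pm+\lambda_2^\pm)/2$, so the normalizations $\lambda_3^+(A_1)=\lambda_3^-(A_2)=1$ are preserved; the constraint $\lambda_i^+\leqslant\sigma$ (when $\sigma\in\mathfrak C(A_1)$) passes to the average; the bound $|\lambda_3|\leqslant\tfrac12(|\lambda_1|+|\lambda_2|)$ gives $|\lambda_3|$-integrability of $f$; and finally $\dot{\mathcal E}_\alpha(\mathbb R^n)$ is a linear space, so $\lambda_3$ has finite weak energy. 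With these routine checks in place, the parallelogram computation above completes the argument, and indeed the only substantive ingredient is the Hilbert-space character of the weak energy, i.e.\ the strict positive definiteness of the inner product $\langle\cdot,\cdot\rangle^{\cdot}_\alpha$ on all signed measures of the class $\dot{\mathcal E}_\alpha(\mathbb R^n)$.
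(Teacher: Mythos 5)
Your proof is correct and follows essentially the same route as the paper: the parallelogram identity in $\dot{\mathcal E}_\alpha(\mathbb R^n)$ combined with the convexity of $\mathcal H$, showing the midpoint of two minimizers would strictly improve the functional unless the two coincide. The only remark is that your final paragraph of bookkeeping is unnecessary: the paper fixes $\mathcal H$ in Section~\ref{sec-statement} to be a \emph{nonempty convex cone} by hypothesis, so $\lambda_3=(\lambda_1+\lambda_2)/2\in\mathcal H$ is immediate and there is nothing further to verify about the Hahn--Jordan decomposition, the constraint, or $f$-integrability.
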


\begin{proof} This can be established by standard methods based on the convexity of the class $\mathcal H$
and the pre-Hil\-bert structure on the space $\dot{\mathcal
E}_\alpha(\mathbb R^n)$. Indeed, if $\lambda$ and $\breve{\lambda}$
are two solutions to the $\mathcal H$-problem, then we obtain from
(\ref{G})
\[4\dot{G}_{\alpha,f}(\mathcal H)\leqslant4\dot{G}_{\alpha,f}\biggl(\frac{\lambda+\breve{\lambda}}{2}\biggr)=
\|\lambda+\breve{\lambda}\|_\alpha^{\cdot\,2}+4\langle f,\lambda+\breve{\lambda}\rangle.\]
On the other hand, applying the parallelogram identity in $\dot{\mathcal E}_\alpha(\mathbb R^n)$ to
$\lambda$ and $\breve{\lambda}$ and then adding and
subtracting $4\langle f,\lambda+\breve{\lambda}\rangle$ we get
\[\|\lambda-\breve{\lambda}\|_\alpha^{\cdot\,2}=
-\|\lambda+\breve{\lambda}\|_\alpha^{\cdot\,2}-4\langle f,\lambda+\breve{\lambda}\rangle+
2\dot{G}_{\alpha,f}(\lambda)+2\dot{G}_{\alpha,f}(\breve{\lambda}).\]
When combined with the preceding relation, this yields
\[0\leqslant\|\lambda-\breve{\lambda}\|^{\cdot\,2}_\alpha\leqslant-4\dot{G}_{\alpha,f}(\mathcal H)+
2\dot{G}_{\alpha,f}(\lambda)+2\dot{G}_{\alpha,f}(\breve{\lambda})=0,\]
which establishes the lemma because $\|\cdot\|^\cdot_\alpha$ is a norm.\end{proof}

\begin{rem}\label{rem2}Let $f=0$, $\sigma=\infty$ (no external field and no constraint), and let
$\mathcal H=\mathcal E_\alpha(\mathbf A,\mathbf 1)$. In view of
(\ref{eq}), the $\mathcal H$-problem is then the problem on the
existence of $\lambda\in\mathcal E_\alpha(\mathbf A,\mathbf 1)$ with
\begin{equation}\label{rem-eq}\|\lambda\|^2_\alpha=
\inf_{\nu\in\mathcal E_\alpha(\mathbf A,\mathbf 1)}\,\|\nu\|^2_\alpha=:w_\alpha(\mathbf A,\mathbf 1),
\end{equation}
$1/w_\alpha(\mathbf A,\mathbf 1)$ being known as the (standard)
$\alpha$-Riesz \textsl{capacity\/} of the (generalized) condenser
$\mathbf A$. To avoid trivialities, assume that
$c_\alpha(A_1)<\infty$.\footnote{If $c_\alpha(A_i)=\infty$ for
$i=1,2$, then $w_\alpha(\mathbf A,\mathbf 1)=0$, and hence this
infimum cannot be an actual minimum because $0\notin\mathcal
E_\alpha(\mathbf A,\mathbf 1)$.} If moreover the separation
condition (\ref{dist}) holds, then problem (\ref{rem-eq}) is
solvable if and only if either $c_\alpha(A_2)<\infty$, or $A_2$ is
\textsl{not\/} $\alpha$-thin at infinity (see \cite[Theorem~5]{ZR}).

Thus, if $A_2$ is $\alpha$-thin at infinity, but $c_\alpha(A_2)=\infty$ (such $A_2$ exists
according to Remark~\ref{rem-thin}), then $\|\nu\|^2_\alpha>w_\alpha(\mathbf A,\mathbf 1)$ for any
$\nu\in\mathcal E_\alpha(\mathbf A,\mathbf 1)$. It can however be shown that any (minimizing) sequence
$\{\nu_j\}_{j\in\mathbb N}\subset\mathcal E_\alpha(\mathbf A,\mathbf 1)$ with
$\lim_{j\to\infty}\,\|\nu_j\|^2_\alpha=w_\alpha(\mathbf A,\mathbf 1)$ then converges strongly and vaguely
to a (unique) measure $\theta\in\mathcal E_\alpha(\mathbf A)$ such that $\theta^+(A_1)=1$, but
$\theta^-(A_2)<1$ (see \cite{ZR}). Of course, $\|\theta\|^2_\alpha=w_\alpha(\mathbf A,\mathbf 1)$.
Using the electrostatic interpretation, which is possible for the Coulomb kernel $|x-y|^{-1}$ on
$\mathbb R^3$, we say that the described pair $(A_1,A_2)$ of oppositely charged conductors achieves
its equilibrium state only provided that a nonzero part (with mass $1-\theta^-(A_2)>0$) of charge
carried by $A_2$ vanishes at the point at infinity.

This phenomenon, discovered first for $\alpha=2$ in \cite{Z0}, is
actually a characteristic feature of \textsl{space\/} condensers;
compare with Bagby's study \cite{Bagby} where it has been proven
that the infimum of the logarithmic energy over $\mathfrak M(\mathbf
A,\mathbf 1)$, $\mathbf A=(A_1,A_2)$ being a condenser in $\mathbb
R^2$ such that $\mathrm{Cl}_{\overline{\mathbb R^2}}\,A_1\cap
\mathrm{Cl}_{\overline{\mathbb R^2}}\,A_2=\varnothing$, is
\textsl{always\/} an actual minimum. (Here $\overline{\mathbb R^2}$
is the one-point compactification of $\mathbb R^2$.) Such a drastic
difference between the theories of space and plane condensers is
caused by the fact that the logarithmic capacity of a plane
condenser is invariant with respect to the M\"{o}bius
transformations (more generally, conformal mappings) of
$\overline{\mathbb R^2}$, while the Riesz capacity of a space
condenser is not so.\end{rem}

\begin{exmp}Let $n=3$, $\alpha=2$ and $A_2=D^c=Q_\varrho$, where $Q_\varrho$ is given by (\ref{descr}),
and let $A_1$ be a closed set in $\mathbb R^3$ with
$c_2(A_1)<\infty$ possessing the separation property (\ref{dist}).
According to \cite[Theorem~5]{ZR}, a solution to problem
(\ref{rem-eq}) (with $\alpha=2$) does exist if $\varrho$ is given by
either (\ref{c1}) or (\ref{c3}), while the problem has no solution
if $\varrho$ is defined by (\ref{c2}). These (theoretical) results
have been illustrated in \cite{HWZ,OWZ} by means of numerical
experiments.\end{exmp}

\textsl{In all that follows we shall always suppose that\/ $D^c$ is
not\/ $\alpha$-thin at infinity\/.}

\begin{rem}\label{rem-FZ-Pot}Assume that $f=0$ and $\sigma=\infty$, and define
\[\ddot{\mathcal E}_\alpha(\mathbf A,\mathbf 1):=\dot{\mathcal E}_\alpha(\mathbf A,\mathbf 1)\cap
\mathrm{Cl}_{\dot{\mathcal E}_\alpha(\mathbb R^n)}\,\mathcal
E_\alpha(\mathbf A,\mathbf 1).\] The $\mathcal H$-problem with
$\mathcal H=\ddot{\mathcal E}_\alpha(\mathbf A,\mathbf 1)$ is in
fact \cite[Problem~6.2]{FZ-Pot}, solved by \cite[Theorems~6.1,
6.2]{FZ-Pot}. To be precise, it has been proven in
\cite[Theorem~6.1]{FZ-Pot} that
\[\inf_{\mu\in\ddot{\mathcal E}_\alpha(\mathbf A,\mathbf 1)}\,\|\mu\|^{\cdot\,2}_\alpha=
\inf_{\nu\in\mathcal E_\alpha(\mathbf A,\mathbf 1)}\,\|\nu\|^2_\alpha\quad
\bigl({}=w_\alpha(\mathbf A,\mathbf 1)\bigr).\]
Furthermore, the $\ddot{\mathcal E}_\alpha(\mathbf A,\mathbf 1)$-problem is shown to be (uniquely) solvable
if and only if
\begin{equation}\label{gcapfinite}c_g(A_1)<\infty,\end{equation}
and the solution $\lambda_{\mathcal H}$ is then given by
\begin{equation}\label{twodotss}\lambda_{\mathcal H}=\mu_{A_1,g}-\mu_{A_1,g}',\end{equation}
where $\mu_{A_1,g}$ is the $g$-capacitary measure on $A_1$ (which
exists,\footnote{It has been used here that $c_g(A_1)>0$, which is
clear from the permanent assumption (\ref{nonzero}) and
footnote~\ref{foot-ga}.\label{cgn}} see Remark~\ref{rem-eq-capacit}
for $F=A_1$). If moreover the separation condition (\ref{dist})
holds, then assumption (\ref{gcapfinite}) (which by
Lemma~\ref{eq-r-g} is now equivalent to $c_\alpha(A_1)<\infty$) is
also necessary and sufficient for the solvability of problem
(\ref{rem-eq}), and its solution is again given by (\ref{twodotss})
\cite[Theorem~6.2]{FZ-Pot}.

Combining Theorems~\ref{bal-mass-th}, \ref{thm}, and~\ref{l-aux}
implies that the quoted results on the solvability of the
$\ddot{\mathcal E}_\alpha(\mathbf A,\mathbf 1)$-prob\-lem remain
valid if $\ddot{\mathcal E}_\alpha(\mathbf A,\mathbf 1)$ is replaced
by the class of all $\mu\in\dot{\mathcal E}_\alpha(\mathbf A,\mathbf
1)$ such that $\mu^+\in\mathcal E_g^+(A_1,1;D)$ and
$\mu^-=(\mu^+)'$. This observation may be viewed as a motivation to
the study of the $\widetilde{\mathcal H}$-problem,
$\widetilde{\mathcal H}$ being defined by (\ref{def-H}) below.
\end{rem}

\textsl{In all that follows we assume that either Case\/~{\rm I} or
Case\/~{\rm II} holds, where:}
\begin{itemize}
\item[\rm I.] \textsl{$f\geqslant0$ is l.s.c.\ on\/ $\mathbb R^n$ and}
\begin{equation}\label{f}
f=0\text{ \ \textsl{n.e.\ on\/} $D^c$;}
\end{equation}
\item[\rm II.] \textsl{$f=\kappa_\alpha(\zeta-\zeta')$ where\/ $\zeta$ is an extendible\/ {\rm(}signed\/{\rm)}
Radon measure on\/ $D$ with\/ $E_g(\zeta)<\infty$ and\/ $\zeta'$ is
the\/ $\alpha$-Riesz swept measure of\/ $\zeta$ onto\/~$D^c$}.
\end{itemize}
Note that in Case~II the external field $f$ is finite n.e.\ on
$\mathbb R^n$ according to the general convention (\ref{1.3.10}),
and it also satisfies (\ref{f}) by (\ref{bal-eq}). In fact, then
(see Lemma~\ref{l-hatg})
\begin{equation}\label{fII}f=g\zeta\text{ \ n.e.\ on\ }D.\end{equation}
Also observe that in Case I we actually have $f=0$ on $D^c$, for
$f\geqslant0$ is l.s.c.

\subsection{An auxiliary minimum $\alpha$-Green energy problem}\label{sec-5.2} Being $c_g$-absolutely
continuous, any $\mu\in\mathcal E^+_g(A_1,1;D)$ is
\textsl{$c_\alpha$-ab\-sol\-ut\-ely continuous\/} (see
footnote~\ref{foot-ga}), which will be used permanently throughout
the paper.

Let $f$ and $\sigma\in\mathfrak C(A_1)\cup\{\infty\}$ be as indicated in Section~\ref{sec-5.1}, and let
$\mathcal E^\sigma_{g,f}(A_1,1;D)$ consist of all $\mu\in\mathcal E^+_g(A_1,1;D)$ such that
$\mu\leqslant\sigma$ and $f$ is $\mu$-int\-egr\-able.  For any $\mu\in\mathcal E^\sigma_{g,f}(A_1,1;D)$ write
\begin{equation}\label{def-G}G_{g,f}(\mu):=\|\mu\|^2_g+2\langle f,\mu\rangle=
|\mu\|^2_g+2\langle f|_D,\mu\rangle,\end{equation}
the latter equality being valid since $\mu^*(D^c)=0$ for any $\mu\in\mathfrak M^+(D;\mathbb R^n)$,
see Section~\ref{sec:princ}. If the class $\mathcal E^\sigma_{g,f}(A_1,1;D)$ is nonempty, or equivalently
\begin{equation}\label{gfin}G^\sigma_{g,f}(A_1,1;D):=
\inf_{\mu\in\mathcal
E^\sigma_{g,f}(A_1,1;D)}\,G_{g,f}(\mu)<\infty,\end{equation} then
the following (auxiliary) minimum $\alpha$-Green energy problem,
which is actually \cite[Problem~3.2]{DFHSZ}, makes sense.

\begin{problem}\label{3.2}Does there exist\/ $\lambda_{A_1,g}\in\mathcal E^\sigma_{g,f}(A_1,1;D)$ with
\[G_{g,f}(\lambda_{A_1,g})=G^\sigma_{g,f}(A_1,1;D)?\]
\end{problem}

\begin{lem}\label{minusinfty} $G^\sigma_{g,f}(A_1,1;D)>-\infty$.
\end{lem}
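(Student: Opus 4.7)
The plan is to treat Cases I and II separately, with Case I essentially trivial and Case II reducing to Cauchy--Schwarz in the pre-Hilbert space $\mathcal E_g(D)$.

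In Case I we have $f\geqslant0$, so for every $\mu\in\mathcal E^\sigma_{g,f}(A_1,1;D)$ both summands in $G_{g,f}(\mu)=\|\mu\|^2_g+2\langle f,\mu\rangle$ are nonnegative, whence $G^\sigma_{g,f}(A_1,1;D)\geqslant0$.

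The substantive case is Case~II, where $f=\kappa_\alpha(\zeta-\zeta')$ with $\zeta\in\mathcal E_g(D)$, and (\ref{fII}) gives $f=g\zeta$ n.e.\ on $D$. Any admissible $\mu$ lies in $\mathcal E^+_g(D)$, so it is $c_g$-absolutely continuous and (by the footnote to Theorem~\ref{th-equi}) also $c_\alpha$-absolutely continuous; in particular $\mu$ annihilates the exceptional set where $f\ne g\zeta$. Using $\mu^*(D^c)=0$, I would therefore rewrite
\[\langle f,\mu\rangle=\int_D f\,d\mu=\int_D g\zeta\,d\mu=E_g(\zeta,\mu),\]
with the last identity well-defined because $\zeta,\mu\in\mathcal E_g(D)$ (and this simultaneously confirms $\mu$-integrability of $f$, via $E_g(|\zeta|,\mu)\leqslant\|\zeta\|_g\|\mu\|_g<\infty$).

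Now Cauchy--Schwarz in $\mathcal E_g(D)$ yields $|E_g(\zeta,\mu)|\leqslant\|\zeta\|_g\|\mu\|_g$, so
\[G_{g,f}(\mu)\geqslant\|\mu\|_g^2-2\|\zeta\|_g\|\mu\|_g=\bigl(\|\mu\|_g-\|\zeta\|_g\bigr)^2-\|\zeta\|_g^2\geqslant-\|\zeta\|_g^2,\]
and taking the infimum over $\mu\in\mathcal E^\sigma_{g,f}(A_1,1;D)$ gives the desired bound $G^\sigma_{g,f}(A_1,1;D)\geqslant-\|\zeta\|_g^2>-\infty$. The only mildly delicate point is the identification of $\int f\,d\mu$ with $E_g(\zeta,\mu)$, which hinges on the $c_\alpha$-absolute continuity of admissible $\mu$; everything else is just completing the square in a pre-Hilbert space.
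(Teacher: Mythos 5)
Your proof is correct and follows essentially the same route as the paper's: Case I is immediate from nonnegativity, and Case II hinges on the identification $\langle f,\mu\rangle=E_g(\zeta,\mu)$ via the $c_\alpha$-absolute continuity of $\mu$, followed by an elementary lower bound yielding $G_{g,f}(\mu)\geqslant-\|\zeta\|_g^2$. The paper completes the square directly in the pre-Hilbert space, writing $G_{g,f}(\mu)=\|\mu+\zeta\|_g^2-\|\zeta\|_g^2$ (an equality it reuses later in Lemma~\ref{l-cont}), whereas you go through Cauchy--Schwarz and then complete a scalar square, but the two manipulations are interchangeable and produce the identical bound.
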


\begin{proof} This is obvious in Case~I since, by the strict positive definiteness of the kernel~$g$,
\[G_{g,f}(\mu)=\|\mu\|^2_g+2\langle f,\mu\rangle>0\text{ \ for every\ }\mu\in\mathcal E^\sigma_{g,f}(A_1,1;D).
\]

If Case II takes place, then by (\ref{fII}) the equality $f=g\zeta$
holds n.e.\ on $D$, and hence $\mu$-a.e.\ for any
($c_\alpha$-ab\-sol\-utely continuous) measure $\mu\in\mathcal
E^\sigma_{g,f}(A_1,1;D)$. Integrating $f=g\zeta$ with respect to
$\mu$ gives $2\langle
f,\mu\rangle=2E_g(\zeta,\mu)=\|\mu+\zeta\|^2_g-\|\mu\|^2_g-\|\zeta\|^2_g$,
and therefore
\begin{equation}\label{caseII}G_{g,f}(\mu)=\|\mu+\zeta\|^2_g-\|\zeta\|^2_g\geqslant-\|\zeta\|^2_g>-\infty.
\end{equation}
This completes the proof by letting here $\mu$ range over $\mathcal E^\sigma_{g,f}(A_1,1;D)$.\end{proof}

\textsl{In all that follows we assume that}
\begin{equation}\label{suff}c_g(A_1^\circ)>0,\text{ \textsl{where\/} }A_1^\circ:=\bigl\{x\in A_1: \
|f(x)|<\infty\bigr\}.\end{equation} Note that in Case II this holds
automatically, which is clear from $c_g(A_1)>0$ (see
footnote~\ref{cgn}) and the fact that in Case~II the external field
is finite n.e.\ on $\mathbb R^n$.

\begin{lem}\label{suff-fin} $G^\sigma_{g,f}(A_1,1;D)$ is finite if either\/ $\sigma=\infty$,
or otherwise if the following two requirements hold: $\sigma(A_1^\circ)>1$ and\/ $E_g(\sigma|_K)<\infty$
for every compact\/ $K\subset A_1^\circ$.
\end{lem}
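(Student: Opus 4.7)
The task is to exhibit at least one measure $\mu$ lying in $\mathcal E^\sigma_{g,f}(A_1,1;D)$ with $G_{g,f}(\mu)<\infty$; in view of (\ref{gfin}) this will finish the proof. I will construct $\mu$ concretely, supported on a well-chosen compact subset $K$ of $A_1^\circ$ on which $f$ is controlled, treating the cases $\sigma=\infty$ and $\sigma\in\mathfrak C(A_1)$ separately but in parallel, and postponing the verification of $f$-integrability in Case~II to the end.

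For the unconstrained case $\sigma=\infty$, I first invoke hypothesis (\ref{suff}) together with (\ref{compact}) to obtain a compact $K_0\subset A_1^\circ$ with $0<c_g(K_0)<\infty$, and take $\mu_0:=\mu_{K_0,g}$ to be the $g$-capacitary measure on $K_0$, which exists by Remarks~\ref{ex-perf} and~\ref{remark} and satisfies $\|\mu_0\|_g^2=1/c_g(K_0)<\infty$. In Case~II this $\mu_0$ will be my candidate directly. In Case~I, since $f$ is l.s.c.\ on $\mathbb R^n$ and finite on $A_1^\circ$, the sets $K_0\cap\{f\leqslant m\}$ are closed and increase to $K_0$; countable additivity of the Radon measure $\mu_0$ then gives $\mu_0(K_0\cap\{f\leqslant m\})\uparrow 1$, so for some $m_0$ the quantity $\eta:=\mu_0(K_0\cap\{f\leqslant m_0\})$ is positive, and the rescaled measure $\mu:=\mu_0|_{K_0\cap\{f\leqslant m_0\}}/\eta$ will be my candidate. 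It has mass $1$, is carried by $A_1$, its $g$-energy is dominated by $E_g(\mu_0)/\eta^2$, and $f\leqslant m_0$ on its support.

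For the constrained case, I use $\sigma(A_1^\circ)>1$ together with countable additivity of $\sigma$ (in Case~I applied to the Borel increasing sequence $A_1^\circ\cap\{f\leqslant m\}$) to find $m_0$ with $\sigma(A_1^\circ\cap\{f\leqslant m_0\})>1$; in Case~II no truncation is needed. Inner regularity of $\sigma$ then yields a compact $K\subset A_1^\circ\cap\{f\leqslant m_0\}$ (respectively $K\subset A_1^\circ$ in Case~II) with $s:=\sigma(K)>1$, and I set $\mu:=\sigma|_K/s$. This $\mu$ satisfies $\mu(A_1)=1$ and the constraint $\mu\leqslant\sigma$ (crucially because $s>1$: for any Borel $B$ one has $\mu(B)=\sigma(B\cap K)/s\leqslant\sigma(B)$), while $E_g(\mu)=E_g(\sigma|_K)/s^2<\infty$ by the second hypothesis of the lemma.

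It remains to verify $f$-integrability. In Case~I this is immediate from the uniform bound $f\leqslant m_0$ on the support of $\mu$ together with $f\geqslant 0$. In Case~II, for any $\mu\in\mathcal E_g^+(A_1,1;D)$ the relation (\ref{fII}) gives $f=g\zeta^+-g\zeta^-$ n.e.\ on $D$, hence $\mu$-a.e.\ by the $c_\alpha$-absolute continuity of $\mu$; since $\zeta^\pm\in\mathcal E_g^+(D)$, Cauchy--Schwarz in the pre-Hilbert space $\mathcal E_g(D)$ yields
\[\int|f|\,d\mu \leqslant E_g(\zeta^+,\mu)+E_g(\zeta^-,\mu) \leqslant \bigl(\|\zeta^+\|_g+\|\zeta^-\|_g\bigr)\|\mu\|_g < \infty.\]
The only conceivable obstacle is that the truncation by $\{f\leqslant m\}$ might destroy either $\mu_0(K_0)=1$ or $\sigma(A_1^\circ)>1$ in the limit $m\to\infty$; both are ruled out by the countable additivity of the respective Radon measures, with no delicate continuity-of-capacity property required.
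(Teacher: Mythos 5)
Your proof is correct and rests on the same basic strategy as the paper's: exhibit one explicit measure in $\mathcal E_{g,f}^\sigma(A_1,1;D)$ (which in view of Lemma~\ref{minusinfty} and~(\ref{gfin}) yields finiteness). The main difference is how you handle $f$-integrability. The paper truncates once and for all by $E_k:=\{x\in A_1:|f(x)|\leqslant k\}$, observes $E_k\uparrow A_1^\circ$, invokes \cite[Lemma~2.3.3]{F1} to get $c_g(E_k)\uparrow c_g(A_1^\circ)>0$ (resp.\ $\sigma(E_k)\uparrow\sigma(A_1^\circ)>1$), and picks a compact set inside a single $E_{k_0}$; then $|f|\leqslant k_0$ on the support gives $f$-integrability uniformly in Cases~I and~II. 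You instead treat the two cases separately: in Case~I you truncate by $\{f\leqslant m\}$ (equivalent to $\{|f|\leqslant m\}$ since $f\geqslant0$) and either renormalize a capacitary measure or pass to $\sigma|_K/s$; in Case~II you skip truncation entirely and prove $f$-integrability via $|f|\leqslant g\zeta^++g\zeta^-$ $\mu$-a.e.\ and Cauchy--Schwarz in $\mathcal E_g(D)$. Your Case~II argument is a valid alternative (and echoes the estimate used in Lemma~\ref{minusinfty}), but it adds complexity that the paper avoids by truncating $|f|$ rather than $f$. Also, in the unconstrained Case~I you first fix a compact $K_0$ and then truncate/renormalize the capacitary measure; the paper's route of first truncating $A_1^\circ$ by level sets of $|f|$ and using the continuity of inner capacity along the increasing sequence $E_k$ is a bit more direct, though both are sound.
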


\begin{proof}As seen from Lemma~\ref{minusinfty}, it is enough to show that under the stated assumptions
$\mathcal E_{g,f}^\sigma(A_1,1;D)$ is nonempty. Write $E_k:=\{x\in
A_1: \ |f(x)|\leqslant k\}$. As $E_k$, $k\in\mathbb N$, are
universally measurable and $E_k\uparrow A_1^\circ$, we get from
\cite[Lemma~2.3.3]{F1}
\begin{equation}\label{ek}c_g(A_1^\circ)=\lim_{k\to\infty}\,c_g(E_k).\end{equation}

Assume first that $\sigma=\infty$. Since $c_g(A_1^\circ)>0$ by
assumption, in view of (\ref{ek}) and (\ref{compact}) one can choose
$k_0\in\mathbb N$ and a compact set $K_0\subset E_{k_0}$ with
$c_g(K_0)>0$, and hence a measure $\mu\in\mathcal E_g^+(K_0,1;D)$.
Noting that $|\langle f,\mu\rangle|\leqslant k_0$, we actually have
$\mu\in\mathcal E_{g,f}^\sigma(A_1,1;D)$.\footnote{These arguments
can actually be reversed, which proves that in the unconstrained
case ($\sigma=\infty$) the permanent assumption (\ref{suff}) is
necessary and sufficient for the finiteness of
$G^\sigma_{g,f}(A_1,1;D)$.}

Consider next the case where $\sigma\in\mathfrak C(A_1)$. Then $\sigma(A_1^\circ)>1$ by assumption,
and hence there exist $k\in\mathbb N$ and a compact set $K\subset E_k$ such that $\sigma(K)>1$.
Since $E_g(\sigma|_K)<\infty$ and $|f|\leqslant k$ on $K$, the measure $\sigma|_K/\sigma(K)$ belongs
to $\mathcal E_{g,f}^\sigma(A_1,1;D)$.
\end{proof}

\textsl{Unless explicitly stated otherwise, in all that follows\/
$G^\sigma_{g,f}(A_1,1;D)$ is required to be finite.} Sufficient
conditions for this to hold have been provided in
Lemma~\ref{suff-fin}.

\begin{lem} [{\rm see \cite[Lemma~4.1]{DFHSZ}}] A solution $\lambda_{A_1,g}$ to
Problem\/~{\rm\ref{3.2}}
is unique\/ {\rm(}if it exists\/{\rm)}.\end{lem}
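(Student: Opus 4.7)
The plan is to imitate the proof of Lemma~\ref{unique} (the uniqueness result for the $\mathcal{H}$-problem already proved in the excerpt), replacing the weak $\alpha$-Riesz pre-Hilbert structure on $\dot{\mathcal E}_\alpha(\mathbb R^n)$ by the ordinary pre-Hilbert structure on $\mathcal E_g(D)$. The argument rests on two pillars: the convexity of the admissible class $\mathcal E^\sigma_{g,f}(A_1,1;D)$ and the strict positive definiteness of the $\alpha$-Green kernel $g=g^\alpha_D$ (Remark~\ref{ex-perf}), which in turn gives the parallelogram identity and strict convexity of $\|\cdot\|_g^2$ on~$\mathcal E_g(D)$.

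First I would verify that $\mathcal E^\sigma_{g,f}(A_1,1;D)$ is convex. Given two solutions $\lambda,\breve\lambda$ to Problem~\ref{3.2}, the half-sum $(\lambda+\breve\lambda)/2$ is a positive measure carried by $A_1$, of total mass~$1$, bounded above by $\sigma$ (in the constrained setting) or unconstrained (if $\sigma=\infty$), of finite $g$-energy, and with $f$ integrable against it. Hence $(\lambda+\breve\lambda)/2\in\mathcal E^\sigma_{g,f}(A_1,1;D)$.

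Next I would apply the parallelogram identity in $\mathcal E_g(D)$ to $\lambda$ and $\breve\lambda$, exactly as in the proof of Lemma~\ref{unique}. This gives
\[
\|\lambda-\breve\lambda\|_g^{\,2}
=-\|\lambda+\breve\lambda\|_g^{\,2}-4\langle f,\lambda+\breve\lambda\rangle
+2G_{g,f}(\lambda)+2G_{g,f}(\breve\lambda).
\]
Combining this with the trivial bound
\[
4\,G^\sigma_{g,f}(A_1,1;D)\leqslant 4\,G_{g,f}\!\left(\tfrac{\lambda+\breve\lambda}{2}\right)
=\|\lambda+\breve\lambda\|_g^{\,2}+4\langle f,\lambda+\breve\lambda\rangle,
\]
coming from the minimizing property together with the convexity established in the previous step, and using $G_{g,f}(\lambda)=G_{g,f}(\breve\lambda)=G^\sigma_{g,f}(A_1,1;D)$, one obtains
\[
0\leqslant\|\lambda-\breve\lambda\|_g^{\,2}\leqslant -4\,G^\sigma_{g,f}(A_1,1;D)+2G_{g,f}(\lambda)+2G_{g,f}(\breve\lambda)=0.
\]
Finally, since $g$ is strictly positive definite, $\|\cdot\|_g$ is a genuine norm on $\mathcal E_g(D)$, so $\|\lambda-\breve\lambda\|_g=0$ forces $\lambda=\breve\lambda$.

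I do not anticipate any serious obstacle. The only point requiring minor care is the finiteness of $G^\sigma_{g,f}(A_1,1;D)$, which is needed to make the subtraction $-4\,G^\sigma_{g,f}(A_1,1;D)+2G_{g,f}(\lambda)+2G_{g,f}(\breve\lambda)=0$ meaningful (no $\infty-\infty$); but this is guaranteed by the standing assumption (imposed just before the statement) together with Lemma~\ref{minusinfty}, which ensures $-\infty<G^\sigma_{g,f}(A_1,1;D)<\infty$. Everything else is a direct transcription of the argument already given for the weak $\alpha$-Riesz setting.
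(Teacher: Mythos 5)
Your argument is correct and is essentially the standard convexity--parallelogram proof; note that the paper itself gives no proof here, merely citing \cite[Lemma~4.1]{DFHSZ}, but the argument you give is precisely the one used for Lemma~\ref{unique} above, transcribed from $\dot{\mathcal E}_\alpha(\mathbb R^n)$ to $\mathcal E_g(D)$. You are also right to flag that the subtraction at the end needs $G^\sigma_{g,f}(A_1,1;D)$ finite, which is ensured by the standing assumption together with Lemma~\ref{minusinfty}.
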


\begin{rem}\label{rem-zero-inf}If $f=0$ and $\sigma=\infty$, then Problem~\ref{3.2} reduces to
problem (\ref{cap-def}) with $Q=A_1$ and $\kappa=g$. It therefore
has the (unique) solution $\lambda_{A_1,g}$ if and only if condition
(\ref{gcapfinite}) holds, and then (see Remark~\ref{rem-eq-capacit})
\begin{equation}\label{eq-capacit}\lambda_{A_1,g}=\mu_{A_1,g}=\gamma_{A_1,g}/c_g(A_1),\end{equation}
where $\mu_{A_1,g}$, resp.\ $\gamma_{A_1,g}$, is the $g$-capacitary,
resp.\ $g$-equilibrium, measure on $A_1$.\end{rem}

\subsection{A minimum weak $\alpha$-Riesz energy problem for a condenser with separated plates}\label{sec-5.3}
Throughout Section~\ref{sec-5.3}, $\mathbf A$ is a (standard) condenser possessing the separation
property (\ref{dist}). Denoting by $\mathcal E^\sigma_{\alpha,f}(\mathbf A,\mathbf 1)$ the class of all
$\nu\in\mathcal E_\alpha(\mathbf A,\mathbf 1)$ such that $\nu^+\leqslant\sigma$
and $f$ is $|\nu|$-int\-egr\-able, we then obtain from Lemma~\ref{l-st}
\begin{equation}\label{but}\mathcal E^\sigma_{\alpha,f}(\mathbf A,\mathbf 1)=
\dot{\mathcal E}^\sigma_{\alpha,f}(\mathbf A,\mathbf
1).\end{equation} (In general, $\mathcal E^\sigma_{\alpha,f}(\mathbf
A,\mathbf 1)$ is a \textsl{proper\/} subset of $\dot{\mathcal
E}^\sigma_{\alpha,f}(\mathbf A,\mathbf 1)$.) For any $\nu\in\mathcal
E^\sigma_{\alpha,f}(\mathbf A,\mathbf 1)$ write
\begin{equation}\label{def-G-a}G_{\alpha,f}(\nu):=\|\nu\|^2_\alpha+2\langle f,\nu\rangle=
\|\nu\|^{\cdot\,2}_\alpha+2\langle
f,\nu\rangle=\dot{G}_{\alpha,f}(\nu),\end{equation} the last two
equalities being obtained from (\ref{eq}) and~(\ref{G}).

As will be shown at the beginning of the proof of Theorem~\ref{th-equiv}, for any
$\mu\in\mathcal E_{g,f}^\sigma(A_1,1;D)$ (which exists by the permanent assumption (\ref{gfin}))
we have $\mu-\mu'\in\mathcal E^\sigma_{\alpha,f}(\mathbf A,\mathbf 1)$. Hence
\[G_{\alpha,f}^\sigma(\mathbf A,\mathbf 1):=
\inf_{\nu\in\mathcal E^\sigma_{\alpha,f}(\mathbf A,\mathbf
1)}\,G_{\alpha,f}(\nu)<\infty,\] and the following minimum standard
$\alpha$-Riesz energy problem, which is actually
\cite[Problem~3.1]{DFHSZ}, therefore makes sense.

\begin{problem}\label{3.1} Does there exist\/ $\lambda_{\mathbf
A,\alpha}\in\mathcal E^\sigma_{\alpha,f}(\mathbf A,\mathbf 1)$ with
\[G_{\alpha,f}(\lambda_{\mathbf
A,\alpha})=G_{\alpha,f}^\sigma(\mathbf A,\mathbf 1)?\]\end{problem}

By (\ref{but}) and (\ref{def-G-a}), this problem is in fact the
$\mathcal H$-problem with $\mathcal H=\dot{\mathcal
E}^\sigma_{\alpha,f}(\mathbf A,\mathbf 1)$, and hence a solution to
Problem~\ref{3.1} is \textsl{unique\/} provided that it exists (see
Lemma~\ref{unique}).

\begin{thm}\label{th-equiv}If the separation condition\/ {\rm(\ref{dist})} holds, then in
 Cases\/~{\rm I} and\/~{\rm II}
\begin{equation}\label{eq-1-m}G_{\alpha,f}^\sigma(\mathbf A,\mathbf 1)=G_{g,f}^\sigma(A_1,1;D),\end{equation}
$\sigma\in\mathfrak C(A_1)\cup\{\infty\}$ being arbitrary.
Furthermore, Problem\/~{\rm\ref{3.1}} is\/ {\rm(}uniquely\/{\rm)}
solvable if and only if Problem\/~{\rm\ref{3.2}} is so, and then
their solutions are related to one another as follows:
\begin{equation}\label{eq-2-m}\lambda_{\mathbf A,\alpha}=\lambda_{A_1,g}-\lambda_{A_1,g}'.\end{equation}
\end{thm}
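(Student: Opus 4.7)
The plan is to set up a correspondence $\mu\leftrightarrow\mu-\mu'$ between admissible measures in the two problems, show it preserves the functional $G$, and handle the reverse direction by replacing a competitor's negative part with the Riesz balayage of its positive part (which lowers the energy by the orthogonal projection property).

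\textbf{Forward inequality.} Given $\mu\in\mathcal E_{g,f}^\sigma(A_1,1;D)$, since $D^c$ is not $\alpha$-thin at infinity, Theorem~\ref{bal-mass-th} yields $\mu'(\mathbb R^n)=\mu(\mathbb R^n)=1$, so $\mu-\mu'\in\mathfrak M(\mathbf A,\mathbf 1)$ with $(\mu-\mu')^+=\mu$ and $(\mu-\mu')^-=\mu'$ (disjoint supports). Theorem~\ref{thm} gives $\mu-\mu'\in\dot{\mathcal E}_\alpha(\mathbf A,\mathbf 1)$ with $\dot E_\alpha(\mu-\mu')=E_g(\mu)$, and Lemma~\ref{l-st} upgrades this to $\mu-\mu'\in\mathcal E_\alpha(\mathbf A,\mathbf 1)$ with $\|\mu-\mu'\|_\alpha^2=\|\mu\|_g^2$ (cf.\ Lemma~\ref{eq-r-g}). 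For the external field, $\mu'$ is $c_\alpha$-absolutely continuous, while $f=0$ n.e.\ on $D^c$ in Case~I by~(\ref{f}) and in Case~II by (\ref{bal-eq}) applied to $f=\kappa_\alpha(\zeta-\zeta')$; hence $\langle f,\mu'\rangle=0$ and thus $\langle f,\mu-\mu'\rangle=\langle f,\mu\rangle$. Combined with $\mu\leqslant\sigma$ and $f$-integrability, this shows $\mu-\mu'\in\mathcal E_{\alpha,f}^\sigma(\mathbf A,\mathbf 1)$ with $G_{\alpha,f}(\mu-\mu')=G_{g,f}(\mu)$, giving $G_{\alpha,f}^\sigma(\mathbf A,\mathbf 1)\leqslant G_{g,f}^\sigma(A_1,1;D)$.

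\textbf{Reverse inequality.} Take any $\nu\in\mathcal E_{\alpha,f}^\sigma(\mathbf A,\mathbf 1)$. The bounded measure $\nu^+\in\mathcal E_\alpha^+(A_1,1;\mathbb R^n)$ belongs to $\mathcal E_g^+(A_1,1;D)$ by Lemma~\ref{eq-r-g}, and $\|\nu^+-(\nu^+)'\|_\alpha^2=\|\nu^+\|_g^2$. The orthogonal projection property (\ref{proj}), applied to the competitor $\theta=\nu^-\in\mathcal E_\alpha^+(D^c;\mathbb R^n)$, yields
\[
\|\nu^+-\nu^-\|_\alpha\geqslant\|\nu^+-(\nu^+)'\|_\alpha,
\]
with strict inequality unless $\nu^-=(\nu^+)'$. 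Since $\nu^-$ and $(\nu^+)'$ are both $c_\alpha$-absolutely continuous and carried by $D^c$, where $f=0$ n.e., one has $\langle f,\nu^-\rangle=\langle f,(\nu^+)'\rangle=0$, so $\langle f,\nu\rangle=\langle f,\nu^+\rangle$. Therefore $G_{\alpha,f}(\nu)\geqslant\|\nu^+\|_g^2+2\langle f,\nu^+\rangle=G_{g,f}(\nu^+)\geqslant G_{g,f}^\sigma(A_1,1;D)$, because $\nu^+\leqslant\sigma$ and $f$ is $\nu^+$-integrable (as $f$ is $|\nu|$-integrable). Taking the infimum in $\nu$ yields~(\ref{eq-1-m}).

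\textbf{Solvability and formula for the minimizer.} If Problem~\ref{3.2} has solution $\lambda_{A_1,g}$, the forward step shows $\lambda_{A_1,g}-\lambda_{A_1,g}'$ realizes $G_{\alpha,f}^\sigma(\mathbf A,\mathbf 1)$, hence solves Problem~\ref{3.1} (unique by Lemma~\ref{unique}). Conversely, if $\lambda_{\mathbf A,\alpha}$ solves Problem~\ref{3.1}, the strict case of (\ref{proj}) forces $\lambda_{\mathbf A,\alpha}^-=(\lambda_{\mathbf A,\alpha}^+)'$ (else $\lambda_{\mathbf A,\alpha}^+-(\lambda_{\mathbf A,\alpha}^+)'$ would strictly beat it), so $\lambda_{\mathbf A,\alpha}^+$ attains $G_{g,f}^\sigma(A_1,1;D)$ and, by uniqueness, equals $\lambda_{A_1,g}$; this gives~(\ref{eq-2-m}). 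The main technical delicacy is verifying cleanly that $\langle f,\mu'\rangle$, $\langle f,\nu^-\rangle$ and $\langle f,(\nu^+)'\rangle$ all vanish in Case~II, where $f$ is only defined n.e.\ as a difference of Riesz potentials; this is handled by combining the $c_\alpha$-absolute continuity of balayage (Corollaries~3.19, 3.20 of \cite{FZ-Fin}) with the n.e.\ vanishing of $f$ on $D^c$ supplied by~(\ref{bal-eq}). Everything else is a direct bookkeeping using Theorems~\ref{bal-mass-th}, \ref{thm} and Lemmas~\ref{l-st}, \ref{eq-r-g}.
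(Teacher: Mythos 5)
Your proof is correct and follows essentially the same strategy as the paper: establish both inequalities via the correspondence $\mu\leftrightarrow\mu-\mu'$, using Theorem~\ref{bal-mass-th} for mass preservation, the orthogonal projection property~(\ref{proj}) to replace $\nu^-$ by $(\nu^+)'$, and the $c_\alpha$-absolute continuity of balayage together with~(\ref{f}) to kill the $f$-terms on $D^c$. The only cosmetic difference is that for the identity $\|\mu-\mu'\|_\alpha^2=\|\mu\|_g^2$ you route through Theorem~\ref{thm} and Lemma~\ref{l-st} and then invoke~(\ref{eq}), whereas the paper applies Lemma~\ref{eq-r-g} (and Lemma~\ref{l-hen'}) directly; both are valid under the separation hypothesis.
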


\begin{proof}Any measure $\mu\in\mathcal E_{g,f}^\sigma(A_1,1;D)$ has finite standard $\alpha$-Riesz energy,
which is clear from the separation condition (\ref{dist}) by
Lemma~\ref{eq-r-g}, and hence so does the $\alpha$-Riesz swept
measure $\mu'$ of $\mu$ onto $A_2=D^c$. Furthermore, $\mu'(A_2)=1$
according to Theorem~\ref{bal-mass-th}, and so
$\nu:=\mu-\mu'\in\mathcal E^\sigma_{\alpha,f}(\mathbf A,\mathbf 1)$.
We therefore get from (\ref{def-G}) and~(\ref{def-G-a})
\begin{equation}\label{prr-1}G_{g,f}(\mu)=\|\mu\|^2_g+2\langle f,\mu\rangle=\|\mu-\mu'\|^2_\alpha+2\langle
f,\mu-\mu'\rangle=G_{\alpha,f}(\nu)\geqslant
G_{\alpha,f}^\sigma(\mathbf A,\mathbf 1),\end{equation} the second
equality being valid by Lemma~\ref{eq-r-g}, (\ref{f}), and the
$c_\alpha$-absolute continuity of $\mu'$
\cite[Corollary~3.19]{FZ-Fin}. By letting here $\mu$ vary over
$\mathcal E_{g,f}^\sigma(A_1,1;D)$, we obtain
\begin{equation}\label{inequality1}G_{g,f}^\sigma(A_1,1;D)\geqslant
G_{\alpha,f}^\sigma(\mathbf A,\mathbf 1).\end{equation}

Further, fix $\nu\in\mathcal E^\sigma_{\alpha,f}(\mathbf A,\mathbf 1)$. Then
$\nu^+\in\mathcal E_{g,f}^\sigma(A_1,1;D)$, for $E_g(\nu^+)\leqslant E_\alpha(\nu^+)$. Thus
\begin{align}\label{a}G_{\alpha,f}(\nu)=\|\nu\|^2_\alpha+2\langle f,\nu^+\rangle&
\geqslant\|\nu^+-(\nu^+)'\|^2_\alpha+2\langle f,\nu^+\rangle\\
{}&=\|\nu^+\|^2_g+2\langle f,\nu^+\rangle\geqslant
G_{g,f}^\sigma(A_1,1;D),\notag\end{align} where the former equality
holds by (\ref{f}) and the $c_\alpha$-absolute continuity of
$\nu^-$, the former inequality is obtained from (\ref{proj}), and
the latter equality is valid according to Lemma~\ref{l-hen'}.
Letting here $\nu$ range over $\mathcal E^\sigma_{\alpha,f}(\mathbf
A,\mathbf 1)$ and then combining the inequality obtained with
(\ref{inequality1}), we arrive at~(\ref{eq-1-m}).

Suppose now that $\lambda_{\mathbf A,\alpha}$ solves
Problem~\ref{3.1}. Substituting $\lambda_{\mathbf A,\alpha}$ in
place of $\nu$ into (\ref{a}) and then combining this with
(\ref{eq-1-m}), we see that in fact equality prevails in either of
the two inequalities. Problem~\ref{3.2} has therefore a solution
$\lambda_{A_1,g}$; and moreover $\lambda_{\mathbf
A,\alpha}^+=\lambda_{A_1,g}$ and $\lambda_{\mathbf
A,\alpha}^-=(\lambda_{\mathbf A,\alpha}^+)'=\lambda_{A_1,g}'$, which
establishes~(\ref{eq-2-m}).

To complete the proof, assume finally that
$\lambda_{A_1,g}\in\mathcal E_{g,f}^\sigma(A_1,1;D)$ solves
Problem~\ref{3.2}. Substituting $\lambda_{A_1,g}$ in place of $\mu$
into (\ref{prr-1}) and then combining this with (\ref{eq-1-m}), we
see that the inequality in the relation thus obtained is in fact
equality. Problem~\ref{3.1} has therefore a solution
$\lambda_{\mathbf A,\alpha}$, and moreover $\lambda_{\mathbf
A,\alpha}=\lambda_{A_1,g}-\lambda_{A_1,g}'$.
\end{proof}

\textsl{In all that follows, when speaking of
Problem\/~{\rm\ref{3.1}} we shall always tacitly assume the
separation condition\/ {\rm(\ref{dist})} to hold.} Note that in the
case where $f=0$ and $\sigma=\infty$, Problem~\ref{3.1} reduces to
problem (\ref{rem-eq}), solved by \cite[Theorem~6.2]{FZ-Pot} quoted
in Remark~\ref{rem-FZ-Pot} above.

As seen from (\ref{eq-2-m}), Theorem~\ref{th-equiv} remains in force
if the class $\mathcal E^\sigma_{\alpha,f}(\mathbf A,\mathbf 1)$ of
admissible measures in Problem~\ref{3.1} is replaced by that of
$\mu-\mu'$, where $\mu$ ranges over $\mathcal
E^\sigma_{g,f}(A_1,1;D)$. We are thus led to the study of the
$\widetilde{\mathcal H}$-problem, $\widetilde{\mathcal H}$ being
defined by (\ref{def-H}) below.

\subsection{A minimum weak $\alpha$-Riesz energy problem for a generalized condenser}\label{sec-5.4}
Omitting now the separation condition (\ref{dist}), we shall further show that the equivalence of
\cite[Problem~3.1]{DFHSZ} and \cite[Problem~3.2]{DFHSZ}, established in Theorem~\ref{th-equiv} above,
remains valid if the concept of standard $\alpha$-Riesz energy in \cite[Problem~3.1]{DFHSZ} is replaced
by that of weak $\alpha$-Riesz energy, applied now to the (new) admissible measures
$\nu\in\widetilde{\mathcal H}$ defined as follows:
\begin{equation}\label{def-H}\widetilde{\mathcal H}:=
\bigl\{\nu=\mu-\mu': \ \mu\in\mathcal E^\sigma_{g,f}(A_1,1;D)\bigr\}.\end{equation}

In view of the permanent assumption (\ref{gfin}) the class $\mathcal E^\sigma_{g,f}(A_1,1;D)$ is nonempty,
and hence so is $\widetilde{\mathcal H}$, which obviously forms a convex cone. Furthermore,
\[\widetilde{\mathcal H}\subset\dot{\mathcal E}^\sigma_{\alpha,f}(\mathbf A,\mathbf 1),\]
which is seen from Theorems~\ref{bal-mass-th}, \ref{thm} and the
equality $\langle f,\mu'\rangle=0$, the last being obtained from
(\ref{f}) in view of the $c_\alpha$-ab\-sol\-ute continuity of
$\mu'$ \cite[Corollary~3.19]{FZ-Fin}.

For any $\nu=\mu-\mu'\in\widetilde{\mathcal H}$ we therefore get
from (\ref{G}), (\ref{2.8}), and~(\ref{def-G})
\[\dot{G}_{\alpha,f}(\nu)=\|\nu\|^{\cdot\,2}_\alpha+2\langle f,\nu\rangle=
\|\mu\|^2_g+2\langle f,\mu\rangle=G_{g,f}(\mu),\]
and hence
\[\dot{G}_{\alpha,f}(\widetilde{\mathcal H}):=
\inf_{\nu\in\widetilde{\mathcal H}}\,\dot{G}_{\alpha,f}(\nu)=G^\sigma_{g,f}(A_1,1;D).\]

Consider the \textsl{$\widetilde{\mathcal H}$-problem on the
existence of\/ $\lambda_{\widetilde{\mathcal
H}}\in\widetilde{\mathcal H}$ with\/}
$\dot{G}_{\alpha,f}(\lambda_{\widetilde{\mathcal
H}})=\dot{G}_{\alpha,f}(\widetilde{\mathcal H})$. According to
Lemma~\ref{unique} with $\mathcal H=\widetilde{\mathcal H}$, a
solution $\lambda_{\widetilde{\mathcal H}}$ is unique (if it
exists).

Summarizing what we have thus observed, we arrive at the following conclusion.

\begin{thm}\label{lem-equ}In both Cases\/~{\rm I} and\/~{\rm II} for any\/
$\sigma\in\mathfrak C(A_1)\cup\{\infty\}$ we have
\[\dot{G}_{\alpha,f}(\widetilde{\mathcal H})=G_{g,f}^\sigma(A_1,1;D).\]
The\/ $\widetilde{\mathcal H}$-problem has the\/
{\rm(}unique\/{\rm)} solution\/ $\lambda_{\widetilde{\mathcal H}}$
if and only if there is the\/ {\rm(}unique\/{\rm)} solution\/
$\lambda_{A_1,g}$ to Problem\/~{\rm\ref{3.2}}, and in the
affirmative case the following formula holds:
\begin{equation}\label{eq-incl}\lambda_{\widetilde{\mathcal H}}=\lambda_{A_1,g}-\lambda_{A_1,g}'.\end{equation}
\end{thm}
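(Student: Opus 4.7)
The plan is to observe that the content of the theorem is essentially already assembled in the discussion immediately preceding it; what remains is to package those observations and transfer the uniqueness statements through the natural correspondence $\mu \leftrightarrow \mu - \mu'$.

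The first step is to record the energy identity for $\nu = \mu - \mu' \in \widetilde{\mathcal H}$, $\mu \in \mathcal E^\sigma_{g,f}(A_1,1;D)$. The inclusion $\widetilde{\mathcal H} \subset \dot{\mathcal E}^\sigma_{\alpha,f}(\mathbf A, \mathbf 1)$ rests on three ingredients already set up: Theorem~\ref{bal-mass-th}, which gives $\mu'(\mathbb R^n) = 1$ since $D^c$ is not $\alpha$-thin at infinity (so $\nu \in \mathfrak M(\mathbf A,\mathbf 1)$); Theorem~\ref{thm}, which gives $\nu \in \dot{\mathcal E}_\alpha(\mathbb R^n)$ with $\|\nu\|^{\cdot\,2}_\alpha = \|\mu\|_g^2$; and the vanishing $\langle f,\mu'\rangle = 0$, obtained in both Cases~I and~II from (\ref{f}) together with the $c_\alpha$-absolute continuity of $\mu'$ recorded in Corollary~3.19 of \cite{FZ-Fin}. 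Combining these gives
\[
\dot{G}_{\alpha,f}(\nu) = \|\nu\|^{\cdot\,2}_\alpha + 2\langle f,\nu\rangle = \|\mu\|_g^2 + 2\langle f,\mu\rangle = G_{g,f}(\mu).
\]

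For the first assertion, taking the infimum of the above identity as $\mu$ ranges over $\mathcal E^\sigma_{g,f}(A_1,1;D)$ yields $\dot{G}_{\alpha,f}(\widetilde{\mathcal H}) = G^\sigma_{g,f}(A_1,1;D)$ at once, provided the correspondence $\mu \mapsto \mu - \mu'$ is surjective onto $\widetilde{\mathcal H}$, which holds by the very definition (\ref{def-H}).

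For the equivalence of solvability and the formula (\ref{eq-incl}), the key is that the correspondence $\mu \mapsto \mu - \mu'$ is in fact a bijection of $\mathcal E^\sigma_{g,f}(A_1,1;D)$ onto $\widetilde{\mathcal H}$. Injectivity follows because $\mu$ is carried by $A_1 \subset D$ while $\mu'$ is carried by $D^c = A_2$, and the two plates are disjoint, so $\mu$ and $\mu'$ are uniquely recovered as the Hahn–Jordan parts $(\mu-\mu')^+$ and $(\mu-\mu')^-$ of the difference. Because the bijection preserves the energies by the identity above, $\mu_0$ minimises $G_{g,f}$ over $\mathcal E^\sigma_{g,f}(A_1,1;D)$ if and only if $\mu_0 - \mu_0'$ minimises $\dot{G}_{\alpha,f}$ over $\widetilde{\mathcal H}$. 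Uniqueness of $\lambda_{A_1,g}$ (cf.\ Lemma~4.1 of \cite{DFHSZ}) and of $\lambda_{\widetilde{\mathcal H}}$ (Lemma~\ref{unique} applied with $\mathcal H = \widetilde{\mathcal H}$) then matches the two minimisers under the bijection, yielding (\ref{eq-incl}).

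There is no substantial obstacle beyond what has already been overcome in establishing $\widetilde{\mathcal H} \subset \dot{\mathcal E}^\sigma_{\alpha,f}(\mathbf A,\mathbf 1)$ with the energy-preserving identity; once those are in hand, the theorem is essentially a corollary of the discussion immediately preceding it, and the only genuinely new point to verify is the injectivity of the correspondence, which is an immediate consequence of the separation of signs between $A_1$ and $D^c$.
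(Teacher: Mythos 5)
Your proposal is correct and follows essentially the same route as the paper: Theorem~\ref{lem-equ} is stated there as a summarizing conclusion of the discussion in Section~\ref{sec-5.4}, which establishes precisely the energy-preserving identity $\dot{G}_{\alpha,f}(\mu-\mu')=G_{g,f}(\mu)$ via Theorems~\ref{bal-mass-th} and~\ref{thm} together with $\langle f,\mu'\rangle=0$, and transfers the infima and minimizers through the correspondence $\mu\mapsto\mu-\mu'$. Your explicit remark that this correspondence is injective (since $A_1\subset D$ and $A_2=D^c$ are disjoint, so $\mu=\nu^+$ and $\mu'=\nu^-$) makes precise a point the paper leaves implicit, but it does not alter the substance of the argument.
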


\subsection{The case where $f=0$ and $\sigma=\infty$} We shall need the following particular case of
Theorems~\ref{th-equiv} and~\ref{lem-equ}, which can also be obtained from \cite[Theorems~6.1, 6.2]{FZ-Pot}
quoted in Remark~\ref{rem-FZ-Pot} above.

\begin{thm}\label{cor-un-un}Let\/ $f=0$ and\/ $\sigma=\infty$ both hold, and let\/
$\nu\in\widetilde{\mathcal H}$, resp.\ $\nu\in\mathcal
E^\sigma_{\alpha,f}(\mathbf A,\mathbf 1)$, be given. Then\/ $\nu$
solves the\/ $\widetilde{\mathcal H}$-problem, resp.\
Problem\/~{\rm\ref{3.1}}, if and only if assumption\/
{\rm(\ref{gcapfinite})} holds, and in the affirmative case we have\/
\[\nu=\mu_{A_1,g}-\mu_{A_1,g}'=(\gamma_{A_1,g}-\gamma_{A_1,g}')/c_g(A_1).\]
\end{thm}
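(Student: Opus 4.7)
The plan is to deduce Theorem~\ref{cor-un-un} as an immediate corollary of Theorems~\ref{th-equiv} and~\ref{lem-equ}, combined with Remark~\ref{rem-zero-inf}. First I would note that when $f=0$ and $\sigma=\infty$, the class $\mathcal E^\sigma_{g,f}(A_1,1;D)$ reduces to $\mathcal E_g^+(A_1,1;D)$, since the external field integrability is vacuous and no upper constraint is imposed. Hence Problem~\ref{3.2} becomes the problem of minimizing $\|\mu\|_g^2$ over $\mathcal E_g^+(A_1,1;D)$, which is precisely the $g$-capacitary problem~(\ref{cap-def}) with $Q=A_1$ and $\kappa=g$.

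By Remark~\ref{rem-zero-inf}, invoking the perfectness of the $\alpha$-Green kernel (Remark~\ref{ex-perf}) and the general existence criterion for capacitary measures, this problem has a (unique) solution if and only if $c_g(A_1)<\infty$; note that the complementary inequality $c_g(A_1)>0$ is automatic from the permanent assumption~(\ref{nonzero}) together with footnote~\ref{foot-ga}. In the affirmative case the solution is given by
\[\lambda_{A_1,g}=\mu_{A_1,g}=\gamma_{A_1,g}/c_g(A_1).\]

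Next, Theorem~\ref{lem-equ} asserts that the $\widetilde{\mathcal H}$-problem is (uniquely) solvable if and only if Problem~\ref{3.2} is so, with the solutions linked by (\ref{eq-incl}): $\lambda_{\widetilde{\mathcal H}}=\lambda_{A_1,g}-\lambda_{A_1,g}'$. Substituting the expression for $\lambda_{A_1,g}$ above and using linearity of the $\alpha$-Riesz balayage operation yields
\[\lambda_{\widetilde{\mathcal H}}=\mu_{A_1,g}-\mu_{A_1,g}'=(\gamma_{A_1,g}-\gamma_{A_1,g}')/c_g(A_1).\]
The statement for Problem~\ref{3.1} is handled identically by invoking Theorem~\ref{th-equiv} (with its separation condition (\ref{dist}) tacitly in force) in place of Theorem~\ref{lem-equ}, yielding (\ref{eq-2-m}) with the same right-hand side; uniqueness for both problems is ensured by Lemma~\ref{unique}.

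I do not expect any genuine obstacle, since the result is a repackaging of previously established facts. The only minor care needed is bookkeeping: verifying that in the special case $f=0$, $\sigma=\infty$ the admissible classes simplify as claimed, and that the $g$-capacitary problem on $A_1$ is exactly what Problem~\ref{3.2} reduces to, so that Remark~\ref{rem-zero-inf} applies verbatim.
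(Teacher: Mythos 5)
Your proposal is correct and follows precisely the paper's own argument: reduce via Theorem~\ref{lem-equ} (resp.\ Theorem~\ref{th-equiv}) to the statement that $\nu^+$ solves Problem~\ref{3.2}, identify the latter with the $g$-capacitary problem through Remark~\ref{rem-zero-inf} when $f=0$ and $\sigma=\infty$, and substitute~(\ref{eq-capacit}) into~(\ref{eq-incl}), resp.\ (\ref{eq-2-m}). The only difference is that you spell out the (elementary) simplification of the admissible class and the positivity $c_g(A_1)>0$ slightly more explicitly, which is harmless.
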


\begin{proof}By Theorem~\ref{lem-equ}, resp.\ Theorem~\ref{th-equiv}, $\nu$ solves the
$\widetilde{\mathcal H}$-problem, resp.\ Problem~\ref{3.1}, if and
only if $\nu^+$ solves Problem~\ref{3.2}, which in turn holds if and
only if $c_g(A_1)$ is finite, and then $\nu^+=\lambda_{A_1,g}$ is in
fact the $g$-capacitary measure $\mu_{A_1,g}$ on $A_1$ (see
Remark~\ref{rem-zero-inf}). Substituting (\ref{eq-capacit}) into
(\ref{eq-incl}), resp.\ (\ref{eq-2-m}), completes the
proof.\end{proof}

\section{On the existence of minimizers}

Let a generalized condenser $\mathbf A=(A_1,A_2)$, a constraint
$\sigma\in\mathfrak C(A_1)\cup\{\infty\}$, and an external field $f$
satisfy all the permanent assumptions indicated in
Sections~\ref{sec-aux} and~\ref{sec-statement}. Recall that when
speaking of Problem~\ref{3.1}, we always tacitly assume the
separation condition (\ref{dist}) to hold.

Theorems~\ref{main-th}, \ref{infcap} and Corollary~\ref{cor-n-s}
below provide sufficient and/or necessary conditions for the
solvability of the $\widetilde{\mathcal H}$-problem as well as
Problem~\ref{3.1}. See also Theorems~\ref{desc-th1}, \ref{cor-desc},
\ref{desc-infty}, and \ref{cor-infty-2} in Section~\ref{desc} where
such criteria are established in terms of variational inequalities
for the $\alpha$-Riesz potentials.

\begin{thm}\label{main-th}The\/ $\widetilde{\mathcal H}$-problem as well as Problem\/~{\rm\ref{3.1}}
is\/ {\rm(}uniquely\/{\rm)} solvable in both Cases\/~{\rm I} and\/~{\rm II} provided that\/ $c_g(A_1)<\infty$
or\/ $\sigma(A_1)<\infty$ holds.
\end{thm}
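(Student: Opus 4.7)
The plan is to reduce both the $\widetilde{\mathcal H}$-problem and Problem~\ref{3.1} to Problem~\ref{3.2} via Theorems~\ref{th-equiv} and~\ref{lem-equ}: a (unique) solution $\lambda_{A_1,g}\in\mathcal E^\sigma_{g,f}(A_1,1;D)$ of Problem~\ref{3.2} automatically produces minimizers $\lambda_{\mathbf A,\alpha}=\lambda_{\widetilde{\mathcal H}}=\lambda_{A_1,g}-\lambda_{A_1,g}'$ through \eqref{eq-2-m} and \eqref{eq-incl}. The remaining task is to solve Problem~\ref{3.2} by the classical energy-minimization technique, leaning crucially on the perfectness of the $\alpha$-Green kernel $g=g_D^\alpha$ (Remark~\ref{ex-perf}).

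First I would pick a minimizing sequence $\{\mu_j\}\subset\mathcal E^\sigma_{g,f}(A_1,1;D)$. The class is convex, so $\nu:=(\mu_j+\mu_k)/2$ is again admissible; the linear $f$-terms cancel in the parallelogram combination, leaving
\[\|\mu_j-\mu_k\|_g^2=2G_{g,f}(\mu_j)+2G_{g,f}(\mu_k)-4G_{g,f}(\nu)\leqslant 2G_{g,f}(\mu_j)+2G_{g,f}(\mu_k)-4G^\sigma_{g,f}(A_1,1;D)\to0,\]
so $\{\mu_j\}$ is strong Cauchy in $\mathcal E_g^+(D)$. Since $\mu_j(A_1)=1$ uniformly, $\{\mu_j\}$ is also vaguely relatively compact in $\mathfrak M^+(D)$; let $\lambda$ be any vague cluster point. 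Then $\lambda$ is carried by the relatively closed set $A_1$, and by Theorem~\ref{fu-complete} applied to the perfect kernel $g$ the sequence $\{\mu_j\}$ actually converges both strongly in $\mathcal E_g^+(D)$ and vaguely to $\lambda$.

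The main obstacle is verifying that $\lambda$ is genuinely admissible, and specifically that $\lambda(A_1)=1$ — vague limits of unit-mass sequences may lose mass at infinity, and the two alternatives in the hypothesis are precisely what block this. If $\sigma(A_1)<\infty$, the domination $\mu_j\leqslant\sigma$ provides uniform tightness (inner regularity of the finite measure $\sigma$), and a standard bump-function argument produces both $\lambda\leqslant\sigma$ (from vague lower semicontinuity of $\langle\varphi,\sigma-\mu_j\rangle$ on nonnegative test functions) and $\lambda(A_1)=1$. If instead $c_g(A_1)<\infty$, I would test strong convergence against the $\alpha$-Green equilibrium measure $\gamma_{A_1}$ furnished by Theorem~\ref{th-equi}: since $g\gamma_{A_1}=1$ n.e.\ on $A_1$ and every $\mu_j$, as well as $\lambda$, is $c_g$-absolutely continuous,
\[\lambda(A_1)=\int g\gamma_{A_1}\,d\lambda=\langle\gamma_{A_1},\lambda\rangle_g=\lim_{j\to\infty}\langle\gamma_{A_1},\mu_j\rangle_g=\lim_{j\to\infty}\int g\gamma_{A_1}\,d\mu_j=\lim_{j\to\infty}\mu_j(A_1)=1,\]
by Cauchy--Schwarz in $\mathcal E_g(D)$ applied to the strong convergence.

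Finally I would handle the external field and the minimization itself. In Case~II, $f=g\zeta$ n.e.\ with $\zeta\in\mathcal E_g(D)$ (Lemma~\ref{l-hatg} via \eqref{fII}), so strong convergence gives $\langle f,\mu_j\rangle=E_g(\zeta,\mu_j)\to E_g(\zeta,\lambda)=\langle f,\lambda\rangle$, whence $G_{g,f}(\mu_j)\to G_{g,f}(\lambda)=G^\sigma_{g,f}(A_1,1;D)$. In Case~I, $f\geqslant0$ is l.s.c., so Lemma~\ref{lemma-semi} gives $\langle f,\lambda\rangle\leqslant\liminf_j\langle f,\mu_j\rangle$; combining this with $\|\mu_j\|_g^2\to\|\lambda\|_g^2$ yields $G_{g,f}(\lambda)\leqslant G^\sigma_{g,f}(A_1,1;D)$, and admissibility of $\lambda$ forces equality (in particular, $f$ is $\lambda$-integrable). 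Uniqueness of $\lambda_{A_1,g}$ was already recorded after Problem~\ref{3.2}.
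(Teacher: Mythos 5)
Your proof is correct and follows the same broad outline as the paper's: reduce to Problem~\ref{3.2} via Theorems~\ref{th-equiv} and~\ref{lem-equ}, take a minimizing sequence, use the parallelogram identity to get strong Cauchy, invoke perfectness of $g$ to pass to a simultaneous strong and vague limit $\lambda$, verify mass preservation $\lambda(A_1)=1$, and close with lower semicontinuity (Case~I) or strong continuity (Case~II) of $G_{g,f}$. The one place where you genuinely diverge from the paper is the step $\lambda(A_1)=1$ when $c_g(A_1)<\infty$. The paper sets $K_j^*:=\mathrm{Cl}_D(A_1\setminus K_j)$, shows via monotonicity and \cite[Lemma~4.1.1]{F1} that the equilibrium measures $\gamma_{K_j^*}$ form a strong Cauchy sequence converging strongly (and vaguely) to $0$, and then bounds $\mu_k(A_1\setminus K_j)\leqslant\|\gamma_{K_j^*}\|_g\|\mu_k\|_g$ uniformly in~$k$. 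You instead pair the strong convergence $\mu_j\to\lambda$ directly against the single equilibrium measure $\gamma_{A_1}$, using $g\gamma_{A_1}=1$ n.e.\ on $A_1$ together with the $c_g$-absolute continuity of $\lambda$ and of each $\mu_j$ to turn $\langle\gamma_{A_1},\cdot\rangle_g$ into a total-mass functional. This is shorter and conceptually cleaner, since it avoids the auxiliary tail estimate entirely; what the paper's route buys is that it isolates the tightness statement $\lim_j\liminf_k\mu_k(A_1\setminus K_j)=0$ in a form that treats both hypotheses ($\sigma(A_1)<\infty$ and $c_g(A_1)<\infty$) uniformly. Your sketch of the bounded-constraint case is terser than the paper's but amounts to the same observation, namely that $\mu_k(A_1\setminus K_j)\leqslant\sigma(A_1\setminus K_j)\to0$ uniformly in~$k$. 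One small wording issue: in Case~I you should make explicit, as the paper does in Lemma~\ref{extremal}, that the $\lambda$-integrability of $f$ is a consequence of the finiteness of $G^\sigma_{g,f}(A_1,1;D)$ together with $f\geqslant0$ and $\|\lambda\|_g^2<\infty$, rather than something granted by admissibility (which is what you are in the process of establishing).
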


\begin{proof}According to Theorems~\ref{th-equiv} and \ref{lem-equ}, it is enough to show that under the
stated assumptions Problem~\ref{3.2} is
solvable.\footnote{Sufficient conditions for the solvability of
Problem~\ref{3.2}, established in Theorem~\ref{main-th}, are less
demanding than those in \cite{DFHSZ} (cf.\ \cite[Theorems~5.1,
5.2]{DFHSZ}), since our present proof is based on the perfectness of
the $\alpha$-Green kernel and the existence of the $\alpha$-Green
equilibrium measure \cite[Theorem~4.11, 4.12]{FZ-Fin}.}

A sequence $\{\mu_k\}_{k\in\mathbb N}\subset\mathcal
E_{g,f}^\sigma(A_1,1;D)$ is said to be \textsl{minimizing\/} in
Problem~\ref{3.2} if
\begin{equation}\label{minimiz}\lim_{k\to\infty}\,G_{g,f}(\mu_k)=G^\sigma_{g,f}(A_1,1;D).\end{equation}
Let $\mathbb M^\sigma_{g,f}(A_1,1;D)$ consist of all these
$\{\mu_k\}_{k\in\mathbb N}$, which exist by the permanent
assumption~(\ref{gfin}). We proceed the proof of
Theorem~\ref{main-th} with the following auxiliary assertions.

\begin{lem}\label{fund-l} For any\/ $\{\mu_k\}_{k\in\mathbb N}$ and\/ $\{\nu_k\}_{k\in\mathbb N}$
in\/ $\mathbb M^\sigma_{g,f}(A_1,1;D)$,
\begin{equation}\label{fund-l-e}\lim_{k\to\infty}\,\|\mu_k-\nu_k\|_g=0.\end{equation}
\end{lem}

\begin{proof}Based on the convexity of the class $\mathcal E^\sigma_{g,f}(A_1,1;D)$ and the
pre-Hil\-bert structure on the space $\mathcal E_g(D)$, similarly as
in the proof of Lemma~\ref{unique} we obtain
\[0\leqslant\|\mu_k-\nu_k\|^2_g\leqslant-4G_{g,f}^\sigma(A_1,1;D)+2G_{g,f}(\mu_k)+2G_{g,f}(\nu_k).\]
Since $-\infty<G_{g,f}^\sigma(A_1,1;D)<\infty$, we get
(\ref{fund-l-e}) from (\ref{minimiz}) by letting here $k\to\infty$.
\end{proof}

\begin{cor}\label{fund-c}Every\/ $\{\mu_k\}_{k\in\mathbb N}\in\mathbb M^\sigma_{g,f}(A_1,1;D)$ is
a strong Cauchy sequence in\/~$\mathcal E^+_g(D)$.\end{cor}

\begin{lem}\label{l-cont}The mapping\/ $\mu\mapsto G_{g,f}(\mu)$ is vaguely l.s.c.\ on\/
$\mathcal E_{g,f}^\sigma(A_1,1;D)$ if Case\/~{\rm I} holds, and in Case\/~{\rm II} it is strongly continuous.
\end{lem}

\begin{proof}Since the mapping $\mu\mapsto E_g(\mu)$ is vaguely l.s.c.\ on $\mathfrak M^+(D)$ according to
\cite[Lemma~2.2.1(e)]{F1} with $\kappa=g$, the former assertion
follows from Lemma~\ref{lemma-semi} with $\psi=f$. The latter
assertion is obtained directly from the equality
in~(\ref{caseII}).\end{proof}

Let $\mathcal E^\sigma_{g,f}(A_1;D)$ consist of all $\nu\in\mathcal E^+_g(A_1;D)$ such that
$\nu\leqslant\sigma$ and $f$ is $\nu$-int\-egr\-able.

\begin{lem}\label{extremal}There is a unique measure\/ $\theta\in\mathcal E^\sigma_{g,f}(A_1;D)$
such that every minimizing sequence\/ $\{\mu_k\}_{k\in\mathbb
N}\in\mathbb M^\sigma_{g,f}(A_1,1;D)$ converges to\/ $\theta$ both
strongly and vaguely in\/ $\mathcal E^+_g(D)$.\end{lem}

\begin{proof}Fix $\{\mu_k\}_{k\in\mathbb N}\in\mathbb M^\sigma_{g,f}(A_1,1;D)$. Being bounded in the vague
topology on $\mathfrak M^+(D)$, $\{\mu_k\}_{k\in\mathbb N}$ has a vague cluster point $\theta$
\cite[Chapter~III, Section~2, Proposition~9]{B2}, and hence there is a subsequence
$\{\mu_{k_j}\}_{j\in\mathbb N}$ of $\{\mu_k\}_{k\in\mathbb N}$ converging vaguely to $\theta$.
Thus $\mu_{k_j}\otimes\mu_{k_j}\to\theta\otimes\theta$ vaguely in $\mathfrak M^+(D\times D)$
\cite[Chapter~III, Section~5, Exercise~5]{B2}. Being strong Cauchy in $\mathcal E^+_g(D)$
according to Corollary~\ref{fund-c}, $\{\mu_k\}_{k\in\mathbb N}$ is strongly bounded.
Applying Lemma~\ref{lemma-semi} to $X=D\times D$ and $\psi=g$, we therefore get
$\theta\in\mathcal E^+_g(D)$. Since $A_1$ is (relatively) closed in $D$, $\theta$ is carried by $A_1$.
Also note that $\theta\leqslant\sigma$, for the vague limit of a sequence of positive measures
likewise is positive.

By the perfectness of the $\alpha$-Green kernel $g$, $\mu_k\to\theta$ strongly in $\mathcal E^+_g(D)$
when $k\to\infty$ (Definition~\ref{def-perf}). As $g$ is strictly positive definite, $\theta$ must be
a unique (strong and) vague cluster point of $\{\mu_k\}_{k\in\mathbb N}$. Since the vague topology is
Hausdorff, a (unique) vague cluster point $\theta$ of $\{\mu_k\}_{k\in\mathbb N}$ must be its vague limit
\cite[Chapter~I, Section~9, n$^\circ$\,1, Corollary]{B1}.

If $\{\nu_k\}_{k\in\mathbb N}$ is another element of $\mathbb
M^\sigma_{g,f}(A_1,1;D)$, then $\nu_k\to\theta$ strongly in
$\mathcal E^+_g(D)$, which is clear from the strong convergence of
$\{\mu_k\}_{k\in\mathbb N}$ to $\theta$ and (\ref{fund-l-e}). Since
the strong topology on $\mathcal E^+_g(D)$ is finer than the vague
topology (Theorem~\ref{fu-complete}), $\theta$ is also the vague
limit of $\{\nu_k\}_{k\in\mathbb N}$.

To complete the proof, it remains to show that $\langle
f,\theta\rangle<\infty$, or equivalently $G_{g,f}(\theta)<\infty$.
As $\mu_k\to\theta$ strongly and vaguely in $\mathcal E^+_g(D)$, it
follows from Lemma~\ref{l-cont} and (\ref{minimiz}) that
\[G_{g,f}(\theta)\leqslant\lim_{k\to\infty}\,G_{g,f}(\mu_k)=G^\sigma_{g,f}(A_1,1;D).\]
Since $G^\sigma_{g,f}(A_1,1;D)$ is finite by the permanent condition
(\ref{gfin}), the lemma follows.
\end{proof}

We are now ready to complete the proof of Theorem~\ref{main-th}. As seen from the last display,
this will be done once we have proven that if $c_g(A_1)<\infty$ or $\sigma(A_1)<\infty$ holds,
then actually $\theta\in\mathcal E^\sigma_{g,f}(A_1,1;D)$, or equivalently
\begin{equation}\label{solution1}\theta(A_1)=1,\end{equation}
$\theta$ being determined uniquely by Lemma~\ref{extremal}.

To this end consider an exhaustion of $A_1$ by an increasing
sequence $\{K_j\}_{j\in\mathbb N}$ of compact sets, and fix
$\{\mu_k\}_{k\in\mathbb N}\in\mathbb M^\sigma_{g,f}(A_1,1;D)$. By
Lemma~\ref{extremal}, then $\mu_k\to\theta$ both strongly and
vaguely in $\mathcal E^+_g(D)$. Since $1_{K_j}$ is upper
semicontinuous (and bounded), while $1_D$ is (finitely) continuous
on $D$, we obtain from Lemma~\ref{lemma-semi} with $X=D$, applied
subsequently to $1_D$ and $-1_{K_j}$,
\begin{align*}1=\lim_{k\to\infty}\,\mu_k(A_1)\geqslant\theta(A_1)&=\lim_{j\to\infty}\,\theta(K_j)\geqslant
\lim_{j\to\infty}\,\limsup_{k\to\infty}\,\mu_k(K_j)\\&{}=
1-\lim_{j\to\infty}\,\liminf_{k\to\infty}\,\mu_k(A_1\setminus
K_j).\end{align*}
Equality (\ref{solution1}) will therefore follow if we prove the relation
\begin{equation}\label{l}\lim_{j\to\infty}\,\liminf_{k\to\infty}\,\mu_k(A_1\setminus
K_j)=0.\end{equation}

Assume first that the constraint $\sigma=\xi\in\mathfrak C(A_1)$ is bounded. Since
\[\infty>\xi(A_1)=\lim_{j\to\infty}\,\xi(K_j),\]
it follows that
\[\lim_{j\to\infty}\,\xi(A_1\setminus K_j)=0.\]
When combined with $\mu_k(A_1\setminus K_j)\leqslant\xi(A_1\setminus
K_j)$ for any $k,j\in\mathbb N$, this implies~(\ref{l}).

Assume finally that $c_g(A_1)<\infty$. For any  $F\subset A_1$, relatively closed in $D$, then
there exists the (unique) $\alpha$-Green equilibrium measure $\gamma_F$ on $F$, see Theorem~\ref{th-equi}.

Write $K_j^*:=\mathrm{Cl}_D(A_1\setminus K_j)$, $j\in\mathbb N$. By
the monotonicity of $K_j^*$ (as $j$ ranges over $\mathbb N$) and
(\ref{geq-1'}) for $F=K_j^*$, the measure $\gamma_j:=\gamma_{K_j^*}$
belongs to $\Gamma_p:=\Gamma_{K_p^*}$ for all $p\geqslant j$,
$\Gamma_F$ being defined in Theorem~\ref{th-equi}, while $\gamma_p$
solves problem (\ref{alt}) with $F=K_p^*$. Applying
\cite[Lemma~4.1.1]{F1} to the convex class $\Gamma_p$, we therefore
obtain
\[
\|\gamma_j-\gamma_p\|^2_g\leqslant\|\gamma_j\|^2_g-\|\gamma_p\|^2_g\text{
\ for all\ }p\geqslant j.\] Furthermore, it is clear from
(\ref{alt}) with $F=K_j^*$ that the sequence
$\bigl\{\|\gamma_j\|^2_g\bigr\}_{j\in\mathbb N}$ is bounded and
decreasing, and hence it is Cauchy in $\mathbb R$. The preceding
inequality thus implies that $\{\gamma_j\}_{j\in\mathbb N}$ is
strong Cauchy in $\mathcal E^+_g(D)$. Since it obviously converges
vaguely to zero in $\mathfrak M^+(D)$, zero is also its strong limit
because of the perfectness of the kernel $g$. Hence,
\[\lim_{j\to\infty}\,\|\gamma_j\|_g=0.\]
Integrating (\ref{geq-1'}) with $F=K^*_j$ with respect to the
measure $\mu_k$ (which is $c_\alpha$-absolutely continuous, see the
beginning of Section~\ref{sec-5.2}) and then applying the
Cauchy--Schwarz (Bunyakovski) inequality in the pre-Hilbert space
$\mathcal E_g(D)$, we get
\[\mu_k(A_1\setminus K_j)\leqslant\mu_k(K^*_j)=E_g(\gamma_j,\mu_k)\leqslant
\|\gamma_j\|_g\|\mu_k\|_g\text{ \ for all\ }k,j\in\mathbb N.\] As
$\{\|\mu_k\|_g\}_{k\in\mathbb N}$ is bounded, combining the last two
relations again results in (\ref{l}), thus completing the proof of
Theorem~\ref{main-th}.
\end{proof}

As seen from the following Theorem~\ref{infcap}, the sufficient
conditions on the solvability, established in Theorem~\ref{main-th},
are actually \textsl{sharp\/}.

\begin{thm}\label{infcap} Suppose Case\/~{\rm II} with\/ $\zeta\geqslant0$ takes place. If moreover
$c_g(A_1)=\infty$, then the\/ $\widetilde{\mathcal H}$-problem as
well as Problem\/~{\rm\ref{3.1}} is unsolvable for every\/
$\sigma\in\mathfrak C(A_1)\cup\{\infty\}$ such that\/
$\sigma\geqslant\xi_0$, where\/ $\xi_0\in\mathfrak C(A_1)$ with\/
$\xi_0(A_1)=\infty$ is chosen properly.
\end{thm}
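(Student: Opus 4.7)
By Theorems~\ref{th-equiv} and~\ref{lem-equ}, it suffices to exhibit $\xi_0 \in \mathfrak C(A_1)$ with $\xi_0(A_1) = \infty$ such that Problem~\ref{3.2} has no solution for any $\sigma \geqslant \xi_0$. My plan combines a rigidity argument at the value~$0$ with a construction of a minimizing sequence. In Case~II with $\zeta \geqslant 0$, identity~(\ref{caseII}) gives, for every admissible $\mu$,
\[
G_{g,f}(\mu) \;=\; \|\mu+\zeta\|_g^2 - \|\zeta\|_g^2 \;=\; \|\mu\|_g^2 + 2 E_g(\zeta,\mu) \;\geqslant\; 0,
\]
both summands on the right being nonnegative since $\mu,\zeta\geqslant 0$. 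If equality is attained at some $\mu$, then $\|\mu\|_g=0$, so $\mu=0$ by strict positive definiteness of $g$, contradicting $\mu(A_1)=1$. Hence once $G_{g,f}^{\xi_0}(A_1,1;D)=0$ is established, the inclusion $\mathcal E_{g,f}^{\xi_0}(A_1,1;D)\subset\mathcal E_{g,f}^\sigma(A_1,1;D)$ for any $\sigma\geqslant\xi_0$ yields $G_{g,f}^\sigma(A_1,1;D)=0$ with no minimizer, so the $\widetilde{\mathcal H}$-problem and Problem~\ref{3.1} are unsolvable.

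To produce $\xi_0$, I would use $c_g(A_1)=\infty$, capacitability of the (relatively closed, hence Borel) set $A_1$, and countable sub-additivity of $c_g$ to pick, inductively, pairwise disjoint compact sets $L_k\subset A_1$ with $c_g(L_k)\geqslant k$ and $L_k\cap\overline{B(0,k)}=\varnothing$. The previously chosen $L_j$ with $j<k$ are compact in $D$ and hence of finite $c_g$, so (provided $c_g(A_1\setminus\overline{B(0,k)})=\infty$) sub-additivity leaves infinite residual $c_g$ in $A_1\setminus\bigl((L_1\cup\cdots\cup L_{k-1})\cup\overline{B(0,k)}\bigr)$, and~(\ref{compact}) supplies $L_k$. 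Set $\hat\mu_k:=\mu_{L_k,g}$, the $g$-capacitary measure on $L_k$ (Remarks~\ref{ex-perf},~\ref{remark}), so that $\hat\mu_k(A_1)=1$ and $\|\hat\mu_k\|_g^2=1/c_g(L_k)\leqslant 1/k$, and put $\xi_0:=\sum_{k}\hat\mu_k$. The escape property $L_k\subset\mathbb R^n\setminus\overline{B(0,k)}$ ensures that every $\mathbb R^n$-compact meets only finitely many $L_k$'s, so $\xi_0$ is a positive Radon measure on $\mathbb R^n$, carried by $A_1$, with $\xi_0(A_1)=\infty$; disjointness of the $L_k$'s yields $\hat\mu_k\leqslant\xi_0$. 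By the Cauchy--Schwarz inequality in $\mathcal E_g(D)$, $E_g(\zeta,\hat\mu_k)\leqslant\|\zeta\|_g\,\|\hat\mu_k\|_g\to 0$, so $f$ is $\hat\mu_k$-integrable, $\hat\mu_k\in\mathcal E_{g,f}^{\xi_0}(A_1,1;D)$, and $G_{g,f}(\hat\mu_k)=\|\hat\mu_k\|_g^2+2E_g(\zeta,\hat\mu_k)\to 0$. Hence $G_{g,f}^{\xi_0}(A_1,1;D)=0$, as required.

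The main obstacle is the assumption $c_g(A_1\setminus\overline{B(0,R)})=\infty$ for every $R$ used in the inductive step. This holds automatically when $c_g(A_1\cap\overline{B(0,R)})<\infty$ for every $R$ (the situation where $A_1$'s infinite $g$-capacity lives ``at infinity in $\mathbb R^n$''). If instead the infinite $g$-capacity of $A_1$ accumulates near $\partial D$ in a bounded part of $\mathbb R^n$, the construction must be adjusted---for instance, by replacing $\overline{B(0,k)}$ with a $D$-exhaustion by compacts and arranging escape to $\partial D$ at well-separated $\mathbb R^n$-locations, so that $\xi_0$ stays locally finite on $\mathbb R^n$; this is exactly what ``properly chosen'' in the statement refers to. In the worst scenario where no such $\xi_0\in\mathfrak C(A_1)$ exists, the claim reduces to the unconstrained case $\sigma=\infty$, handled directly by the same rigidity argument applied to any sequence of normalized $g$-capacitary measures on compacts of $A_1$ with $c_g\to\infty$.
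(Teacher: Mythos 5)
Your overall strategy is the same as the paper's: reduce to Problem~\ref{3.2} via Theorems~\ref{th-equiv} and~\ref{lem-equ}, note that in Case~II with $\zeta\geqslant0$ one has $G_{g,f}(\mu)\geqslant\|\mu\|_g^2\geqslant0$ with equality only at $\mu=0\notin\mathcal E^\sigma_{g,f}(A_1,1;D)$, and then build $\xi_0$ as a sum of a minimizing sequence whose $g$-norms tend to zero. Your rigidity and Cauchy--Schwarz steps are correct. However, your construction of $\xi_0$ has a genuine gap, which you notice yourself but do not repair.

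The problem is your use of the $\mathbb R^n$-balls $\overline{B(0,k)}$. Your inductive step requires $c_g\bigl(A_1\setminus\overline{B(0,k)}\bigr)=\infty$ for every $k$, which fails precisely when the infinite $g$-capacity of $A_1$ accumulates near $\partial D$ inside a bounded region of $\mathbb R^n$ --- a case that actually occurs, e.g.\ when $D$ is bounded and $A_1=D$ (then $c_g(A_1\cap\overline{B(0,R)})=c_g(D)=\infty$ for large $R$). The paper sidesteps this entirely by exhausting $D$, not $\mathbb R^n$: take compacts $K_j\uparrow D$ and set $A_1^j:=A_1\cap K_j$. Since each $A_1^j$ is compact in $D$ it automatically has $c_g(A_1^j)<\infty$ (strict positive definiteness of $g$), so countable subadditivity forces $c_g(A_1\setminus A_1^j)=\infty$ for every $j$, regardless of where the capacity lives. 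One then picks $\nu_j\in\mathcal E_g^+(A_1\setminus A_1^j,1;D)$ with compact $D$-support and $\|\nu_j\|_g\leqslant1/j$, choosing the $K_j$ inductively so that $A_1^j\cup S^{\nu_j}_D\subset A_1^{j+1}$; then every $D$-compact meets only finitely many $S^{\nu_j}_D$ and $\xi_0:=\sum_j\nu_j$ is a well-defined positive Radon measure on $D$ carried by $A_1$ with $\xi_0(A_1)=\infty$, and trivially $\nu_j\leqslant\xi_0$ (no disjointness needed, unlike in your construction). Your closing ``fallback'' paragraph is not a repair: the theorem asserts that such a $\xi_0\in\mathfrak C(A_1)$ exists, so reducing to the case $\sigma=\infty$ when your construction fails does not establish the statement; you must actually produce $\xi_0$, and the $D$-exhaustion does so in all cases.
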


\begin{proof}According to Theorems~\ref{th-equiv} and \ref{lem-equ}, it is enough to show that
under the stated assumptions Problem~\ref{3.2} is unsolvable. Since
Case~II with $\zeta\geqslant0$ takes place,
\begin{equation}\label{strpos}G_{g,f}(\nu)=\|\nu\|^2_g+2E_g(\zeta,\nu)\geqslant
\|\nu\|^2_g\geqslant0\text{ \ for all\ }\nu\in\mathcal E^+_g(D).\end{equation}

Consider an exhaustion of $D$ by an increasing sequence $\{K_j\}_{j\in\mathbb N}$ of compact sets, and
write  $A_1^j:=A_1\cap K_j$. As $c_g(A_1^j)<\infty$ for every $j\in\mathbb N$, while $c_g(A_1)=\infty$,
it follows from the subadditivity of $c_g(\cdot)$ on universally measurable sets \cite[Lemma~2.3.5]{F1}
that $c_g(A_1\setminus A_1^j)=\infty$. Hence, for every $j\in\mathbb N$ there is
$\nu_j\in\mathcal E_g^+(A_1\setminus A_1^j,1;D)$ of compact support $S_D^{\nu_j}$ such that
\begin{equation}\label{to0}\|\nu_j\|_g\leqslant 1/j.\end{equation}
Clearly, the $K_j$ can be chosen successively so that $A_1^j\cup S^{\nu_j}_D\subset A_1^{j+1}$.
Any compact set $K\subset D$ is contained in a certain $K_j$ with $j$ large enough, and hence $K$
has points in common with only finitely many $S^{\nu_j}_D$. Therefore $\xi_0$ defined by the relation
\[\xi_0(\varphi):=\sum_{j\in\mathbb N}\,\nu_j(\varphi)\text{ \ for any\ }\varphi\in C_0(D)\]
is a positive Radon measure on $D$ carried by $A_1$. Furthermore, $\xi_0(A_1)=\infty$. For each
$\sigma\in\mathfrak C(A_1)\cup\{\infty\}$ such that $\sigma\geqslant\xi_0$ we thus have
\[\nu_j\in\mathcal E^\sigma_{g,f}(A_1,1;D)\text{ \ for all\ }j\in\mathbb N.\]
Therefore, by (\ref{to0}) and the Cauchy--Schwarz inequality,
\[\lim_{j\to\infty}\,G_{g,f}(\nu_j)=\lim_{j\to\infty}\,\bigl[\|\nu_j\|^2_g+2E_g(\zeta,\nu_j)\bigr]\leqslant
\lim_{j\to\infty}\,\bigl[\|\nu_j\|^2_g+2\|\zeta\|_g\|\nu_j\|_g\bigr]=0.\]
Combined with (\ref{strpos}), this yields
$G^\sigma_{g,f}(A_1,1;D)=0$. Relation (\ref{strpos}) also implies
that $G^\sigma_{g,f}(A_1,1;D)$ can be attained only at zero measure,
for the kernel $g$ is strictly positive definite. Since
$0\not\in\mathcal E^\sigma_{g,f}(A_1,1;D)$, Problem~\ref{3.2} with
the constraint $\sigma$ specified above is unsolvable.\end{proof}

Combining Theorems~\ref{main-th} and~\ref{infcap} leads to the following assertion.

\begin{cor}\label{cor-n-s}Let Case\/~{\rm II} with\/ $\zeta\geqslant0$ take place. Then the\/
$\widetilde{\mathcal H}$-problem as well as Problem\/~{\rm\ref{3.1}}
is solvable for every\/ $\sigma\in\mathfrak C(A_1)\cup\{\infty\}$ if
and only if\/ $c_g(A_1)$ is finite.\end{cor}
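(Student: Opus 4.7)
My plan is to derive the corollary by direct assembly of Theorems~\ref{main-th} and~\ref{infcap}, with nothing substantial added.

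For the ``if'' direction, I would argue as follows. Suppose $c_g(A_1)<\infty$. Since Case~II with $\zeta\geqslant0$ is in force by hypothesis, this finiteness is exactly one of the two alternative sufficient conditions listed in Theorem~\ref{main-th}. That theorem then immediately yields (unique) solvability of both the $\widetilde{\mathcal H}$-problem and Problem~\ref{3.1} for every $\sigma\in\mathfrak C(A_1)\cup\{\infty\}$, irrespective of whether $\sigma(A_1)$ is finite, and no further argument is required.

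For the ``only if'' direction, I would proceed by contraposition. Assume $c_g(A_1)=\infty$. Since we remain in Case~II with $\zeta\geqslant0$, all the hypotheses of Theorem~\ref{infcap} are in force, and that theorem produces a specific constraint $\xi_0\in\mathfrak C(A_1)$ (with $\xi_0(A_1)=\infty$) such that, for every $\sigma\in\mathfrak C(A_1)\cup\{\infty\}$ dominating $\xi_0$, both problems are unsolvable. To contradict the assumption ``solvable for \emph{every} $\sigma$,'' I merely need to exhibit one admissible $\sigma\geqslant\xi_0$; the unconstrained choice $\sigma=\infty$ does the job, since by the conventions introduced in Section~\ref{sec-5.1} the formal symbol $\infty$ imposes no upper bound and hence dominates every measure. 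Thus at $\sigma=\infty$ both the $\widetilde{\mathcal H}$-problem and Problem~\ref{3.1} fail, the desired contradiction.

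I do not anticipate any genuine technical obstacle; the substantive analytic content has already been deployed in the proofs of Theorems~\ref{main-th} and~\ref{infcap} (the perfectness of the $\alpha$-Green kernel and the equilibrium-measure construction for the former; the explicit construction of $\xi_0$ using the subadditivity of $c_g$ for the latter). The only point to check carefully is the bookkeeping matter that $\sigma=\infty$ qualifies as an element of $\mathfrak C(A_1)\cup\{\infty\}$ with $\sigma\geqslant\xi_0$, which is immediate from how the unconstrained case was set up.
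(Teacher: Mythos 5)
Your proposal is correct and matches the paper's approach exactly: the paper's entire proof is the single sentence that the corollary follows by combining Theorems~\ref{main-th} and~\ref{infcap}, and you have simply spelled out the two directions (sufficiency via the first alternative in Theorem~\ref{main-th}, necessity by contraposition using Theorem~\ref{infcap} with the observation that the formal constraint $\sigma=\infty$ trivially dominates $\xi_0$). No gap.
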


\begin{rem}Theorem~\ref{infcap} and Corollary~\ref{cor-n-s} remain valid in Case~I provided that
$f(x)\to0$ as $x\to\omega_D$, where $\omega_D$ is the Alexandroff point of the locally compact space~$D$.
\end{rem}

\section{Description of the supports of the minimizers}\label{desc-sup-sec}

Let $\nu\in\widetilde{\mathcal H}$, resp.\ $\nu\in\mathcal
E^\sigma_{\alpha,f}(\mathbf A,\mathbf 1)$, where $\sigma\in\mathfrak
C(A_1)\cup\{\infty\}$, solve the $\widetilde{\mathcal H}$-problem,
resp.\ Problem~\ref{3.1}. According to Theorem~\ref{lem-equ}, resp.\
Theorem~\ref{th-equiv}, there exists the (unique) solution
$\lambda_{A_1,g}$ to Problem~\ref{3.2}, and moreover
\begin{equation}\label{nunu}\nu^-=(\nu^+)'=\lambda_{A_1,g}'.\end{equation}
The following Theorem~\ref{desc-sup} describes $S_{\mathbb R^n}^{\nu^-}$, while a description of
$S_D^{\nu^+}$ will be provided in Theorems~\ref{cor-desc} and \ref{cor-infty-3} below.

Let $\breve{A}_2$ denote the \textsl{$\kappa_\alpha$-reduced
kernel\/} of  $A_2$ \cite[p.~164]{L}, i.e.\ the set of all $x\in
A_2$ such that $c_\alpha\bigl(B(x,r)\cap A_2\bigr)>0$ for any $r>0$,
where $B(x,r):=\bigl\{y\in\mathbb R^n: \ |y-x|<r\bigr\}$.

For the sake of simplicity of formulation, in the following assertion we assume that in the case $\alpha=2$
the domain $D$ is simply connected.

\begin{thm}\label{desc-sup} $S_{\mathbb R^n}^{\nu^-}$ is described by
\begin{equation}\label{lemma-desc-riesz}
S^{\nu^-}_{\mathbb R^n}=\left\{
\begin{array}{lll} \breve{A}_2 & \text{if} & \alpha<2,\\ \partial D  & \text{if} & \alpha=2.\\
\end{array} \right.
\end{equation}
\end{thm}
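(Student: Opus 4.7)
My plan is to exploit the integral representation \eqref{int-repr}: since $\nu^- = (\nu^+)' = \lambda_{A_1,g}'$ by \eqref{nunu}, we have
\[
\nu^- = \int_D \varepsilon_y^{D^c}\,d\lambda_{A_1,g}(y),
\]
and since $\lambda_{A_1,g}$ is a positive unit measure carried by $A_1\subset D$, the description of $S^{\nu^-}$ reduces to a careful analysis of the supports of the Dirac balayages $\varepsilon_y^{D^c}$ for $y\in D$.

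For the inclusion $S^{\nu^-}\subset\breve{A}_2$ (resp.\ $S^{\nu^-}\subset\partial D$): The balayage $\nu^-$ is $c_\alpha$-absolutely continuous by \cite[Corollary~3.19]{FZ-Fin} and carried by $A_2=D^c$. If $\alpha<2$ and $x\in A_2\setminus\breve{A}_2$, then there exists $r>0$ with $c_\alpha(B(x,r)\cap A_2)=0$; hence $\nu^-(B(x,r))=0$, giving $x\notin S^{\nu^-}$. If $\alpha=2$, each Newtonian balayage $\varepsilon_y^{D^c}$, $y\in D$, is the classical harmonic measure for $D$ at $y$, supported on $\partial D$; the integral representation then gives $S^{\nu^-}\subset\partial D$ directly.

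For the reverse inclusion, fix $x_0\in\breve{A}_2$ (resp.\ $x_0\in\partial D$) and an arbitrary open neighborhood $U$ of $x_0$. From
\[
\nu^-(U)=\int_D \varepsilon_y^{D^c}(U)\,d\lambda_{A_1,g}(y)
\]
and from $\lambda_{A_1,g}(D)=1>0$, it suffices to prove that $\varepsilon_y^{D^c}(U)>0$ for every $y\in D$, i.e.\ that $x_0\in S^{\varepsilon_y^{D^c}}$ for all $y\in D$. For $\alpha<2$, this reduces to the classical fact $S^{\varepsilon_y^{D^c}}=\breve{A}_2$ for every $y\in D$, reflecting the jumping character of the $\alpha$-stable process: it hits every set of positive $\alpha$-capacity with positive probability. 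For $\alpha=2$ with $D$ simply connected, one uses the analogous statement $S^{\varepsilon_y^{D^c}}=\partial D$ for every $y\in D$.

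I expect the main obstacle to be furnishing (or citing from the potential-theoretic literature) the precise support description of $\varepsilon_y^{D^c}$ for a single $y\in D$. The delicate step is the Newtonian case $\alpha=2$, where the simply-connectedness hypothesis on $D$ evidently enters to rule out pathological boundary configurations on which the harmonic measure could vanish on open portions of $\partial D$; in the case $\alpha<2$, by contrast, no such topological assumption is needed, since $\breve{A}_2$ is by definition the set of points of $A_2$ that are charged in a capacitary sense by every neighborhood.
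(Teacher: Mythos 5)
Your reduction is the same as the paper's: both use the integral representation $\nu^- = \lambda_{A_1,g}' = \int \varepsilon_y'\,d\lambda_{A_1,g}(y)$ to reduce everything to determining $S^{\varepsilon_y'}$ for $y\in D$. But the crucial lemma that $S^{\varepsilon_y'} = \breve A_2$ for $\alpha<2$ (resp.\ $\partial D$ for $\alpha=2$) is exactly where you stop, dismissing it as ``the classical fact'' and offering only the probabilistic heuristic that the $\alpha$-stable process ``hits every set of positive $\alpha$-capacity with positive probability.'' This is precisely the substance of the paper's proof, and the authors explicitly remark that they could not find a reference even for the compact-set version of this assertion, which is why they furnish a proof. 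Your ``cite a classical fact'' strategy therefore leaves the heart of the argument missing.

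Here is how the paper actually closes the gap. For $x\in D$ let $K_x$ be the inverse of $\mathrm{Cl}_{\overline{\mathbb R^n}}A_2$ relative to $S(x,1)$; then $\varepsilon_x'$ is the Kelvin transform of the $\alpha$-Riesz equilibrium measure $\gamma_x$ on the \emph{compact} set $K_x$. So the question becomes to describe $S^{\gamma_x}$ for a compact set. For $\alpha=2$ this is in Landkof (Chapter~II, Section~3, n$^\circ$\,13). For $\alpha<2$ the inclusion $S^{\gamma_x}\subset\breve K_x$ follows from $c_\alpha$-absolute continuity; for the reverse, if some $x_0\in\breve K_x$ were not in $S^{\gamma_x}$, one finds $y$ near $x_0$ with $\kappa_\alpha\gamma_x(y)=1$, and since $\kappa_\alpha\gamma_x$ is $\alpha$-harmonic on a ball around $x_0$ disjoint from $S^{\gamma_x}$, the strong maximum principle for $\alpha$-harmonic functions (\cite[Theorem~1.28]{L}, valid because $\alpha<2$ makes the operator nonlocal) forces $\kappa_\alpha\gamma_x\equiv1$ on $\mathbb R^n$, making $\gamma_x$ an equilibrium measure of the whole space, which is absurd. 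This maximum-principle step is what replaces your heuristic. Also, your speculation about why simple connectivity is needed at $\alpha=2$ (``pathological boundary configurations'') is not what is going on; it enters through the cited Landkof description of the support of the Newtonian equilibrium measure of the compact inverse $K_x$, not through ruling out vanishing of harmonic measure on open boundary pieces.
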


\begin{proof}For any $x\in D$ let $K_x$ denote the inverse of $\mathrm{Cl}_{\overline{\mathbb R^n}}A_2$ relative
to $S(x,1)$. Since $K_x$ is compact, there is the (unique)
$\kappa_\alpha$-equil\-ibrium measure $\gamma_x\in\mathcal
E^+_\alpha(K_x;\mathbb R^n)$ on $K_x$ with the properties
$\|\gamma_x\|^2_\alpha=\gamma_x(K_x)=c_\alpha(K_x)$,
\begin{equation}\label{Keq}\kappa_\alpha\gamma_x=1\text{ \ n.e.\ on\ }K_x,\end{equation}
and $\kappa_\alpha\gamma_x\leqslant1$ on $\mathbb R^n$. Note that $\gamma_x\ne0$, for $c_\alpha(K_x)>0$
in consequence of $c_\alpha(A_2)>0$, see \cite[Chapter~IV, Section~5, n$^\circ$\,19]{L}.
We assert that under the stated requirements
\begin{equation}\label{eq-desc}
S^{\gamma_x}_{\mathbb R^n}=\left\{
\begin{array}{lll} \breve{K}_x & \text{if} & \alpha<2,\\ \partial K_x  & \text{if} & \alpha=2.\\ \end{array} \right.
\end{equation}
The latter identity in (\ref{eq-desc}) follows from
\cite[Chapter~II, Section~3, n$^\circ$\,13]{L}. To establish the
former identity,\footnote{We have brought here this proof, since we
did not find a reference for this possibly known assertion.} we
first note that $S^{\gamma_x}_{\mathbb R^n}\subset\breve{K}_x$ by
the $c_\alpha$-absolute continuity of $\gamma_x$. As for the
converse inclusion, assume on the contrary that there is
$x_0\in\breve{K}_x$ such that $x_0\notin S^{\gamma_x}_{\mathbb
R^n}$. Choose $r>0$ with the property  $\overline{B}(x_0,r)\cap
S^{\gamma_x}_{\mathbb R^n}=\varnothing$, where
$\overline{B}(x_0,r):=\bigl\{y\in\mathbb R^n: \ |y-x_0|\leqslant
r\bigr\}$. But $c_\alpha\bigl(B(x_0,r)\cap\breve{K}_x\bigr)>0$,
hence by (\ref{Keq}) there is $y\in B(x_0,r)$ such that
$\kappa_\alpha\gamma_x(y)=1$. The function $\kappa_\alpha\gamma_x$
is $\alpha$-har\-monic on $B(x_0,r)$ \cite[Chapter~I, Section~5,
n$^\circ$\,20]{L}, continuous on $\overline{B}(x_0,r)$, and takes at
$y\in B(x_0,r)$ the maximum value $1$. Applying
\cite[Theorem~1.28]{L} we see that $\kappa_\alpha\gamma_x=1$ holds
$m$-a.e.\ on $\mathbb R^n$, hence everywhere on $(\breve{K}_x)^c$ by
the continuity of $\kappa_\alpha\gamma_x$ on
$\bigl(S^{\gamma_x}_{\mathbb R^n}\bigr)^c$ \
$\bigl[{}\supset(\breve{K}_x)^c\bigr]$, and altogether n.e.\ on
$\mathbb R^n$ by (\ref{Keq}). This means that $\gamma_x$ serves as
the $\alpha$-Riesz equilibrium measure on the whole of $\mathbb
R^n$, which is impossible.

Based on (\ref{nunu}), (\ref{eq-desc}) and the integral
representation (\ref{int-repr}), we then arrive at
(\ref{lemma-desc-riesz}) with the aid of the fact that for every
$x\in D$, $\varepsilon_x'$ is the Kelvin transform of the
equilibrium measure $\gamma_x$, see
\cite[Section~3.3]{FZ-Fin}.\end{proof}

\section{Description of the potentials of the minimizers}\label{desc}

Let a generalized condenser $\mathbf A=(A_1,A_2)$, a constraint
$\sigma\in\mathfrak C(A_1)\cup\{\infty\}$, and an external field $f$
be as indicated in Sections~\ref{sec-aux} and~\ref{sec-statement}.
For any $\nu\in\dot{\mathcal E}_{\alpha,f}^\sigma(\mathbf A,\mathbf
1)$ define $W_{\alpha,f}^\nu:=\kappa_\alpha\nu+f$;
$W_{\alpha,f}^\nu$ is termed the \textsl{$f$-weighted\/
$\alpha$-Riesz potential\/} of $\nu$. Since $\kappa_\alpha\nu$ is
finite n.e.\ on $\mathbb R^n$ by the general convention
(\ref{1.3.10}), $W_{\alpha,f}^\nu$ is well defined n.e.\ on $\mathbb
R^n$ and finite n.e.\ on $A_1^\circ$ (see (\ref{suff})).
Furthermore, $W_{\alpha,f}^\nu$ is $|\mu|$-measurable for every
$\mu\in\mathfrak M(\mathbb R^n)$.

The purpose of this section is to establish  sufficient and/or
necessary conditions for the solvability of the $\widetilde{\mathcal
H}$-problem as well as Problem~\ref{3.1} in terms of variational
inequalities for the $f$-weighted $\alpha$-Riesz potential.

\subsection{Variational inequalities in the constrained minimum $\alpha$-Riesz weak energy
problems}\label{sec-var1-Riesz} Consider first the
$\widetilde{\mathcal H}$-problem as well as Problem~\ref{3.1} in the
constrained setting ($\sigma\ne\infty$). Omitting now the
requirement $G_{g,f}^\sigma(A_1,1;D)<\infty$, for a given
$\xi\in\mathfrak C(A_1)$ we assume instead that $E_g(\xi|_K)<\infty$
for every compact $K\subset A_1^\circ$. Also assume that
\begin{equation}\label{as1}\xi(A_1\setminus A_1^\circ)=0,\end{equation}
which yields $\xi(A_1^\circ)>1$. In view of Lemma~\ref{suff-fin},
$\mathcal E_{g,f}^\xi(A_1,1;D)$ is then nonempty, and hence the
auxiliary Problem~\ref{3.2} makes sense. According to
Theorem~\ref{lem-equ}, resp.\ Theorem~\ref{th-equiv},  so does the
$\widetilde{\mathcal H}$-problem, resp.\ Problem~\ref{3.1}.

\begin{thm}\label{desc-th1}Let\/ $f$ be lower bounded, and let\/ $\nu\in\widetilde{\mathcal H}$,
resp.\ $\nu\in\mathcal E^\xi_{\alpha,f}(\mathbf A,\mathbf 1)$, be
given. Then\/ $\nu$ solves the\/ $\widetilde{\mathcal H}$-problem,
resp.\ Problem\/~{\rm\ref{3.1}}, if and only it satisfies the
following three relations
\begin{align}
\label{sol-bal}\nu^-&=(\nu^+)',\\
\label{desc1'}W^\nu_{\alpha,f}&\geqslant w\quad(\xi-\nu^+)\mbox{-a.e.},\\
W^\nu_{\alpha,f}&\leqslant
w\quad\nu^+\mbox{-a.e.}\label{desc2'}\end{align} with some\/
$w\in\mathbb R$.\footnote{The lower boundedness of $f$ holds
automatically whenever Case~I takes place. Furthermore, in Case~I
(\ref{desc2'}) can be rewritten equivalently in the following
apparently stronger form: $W^\nu_{\alpha,f}\leqslant w$ on
$S^{\nu^+}_D$. Also note that when speaking of the
$\widetilde{\mathcal H}$-problem, (\ref{sol-bal}) holds
automatically, see~(\ref{def-H}).}
\end{thm}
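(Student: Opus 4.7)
The plan is to reduce the claim to an analogous variational characterization for the auxiliary $\alpha$-Green Problem~\ref{3.2} via Theorems~\ref{th-equiv} and~\ref{lem-equ}, and then to prove that characterization by combining the convex-quadratic expansion of $G_{g,f}$ with a mass-transport perturbation.

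First I would use Theorems~\ref{th-equiv} and~\ref{lem-equ} to observe that $\nu$ solves the $\widetilde{\mathcal H}$-problem (resp.\ Problem~\ref{3.1}) if and only if $\mu:=\nu^+\in\mathcal E_{g,f}^\xi(A_1,1;D)$ solves Problem~\ref{3.2} together with $\nu^-=\mu'$; in particular (\ref{sol-bal}) is automatic. Lemma~\ref{l-hatg} gives $g\mu=\kappa_\alpha\mu-\kappa_\alpha\mu'$ n.e.\ on $D$, so on $D$ we have $W^\nu_{\alpha,f}=g\mu+f$ n.e. Since $\xi$, $\mu$, and $\xi-\mu$ are $c_\alpha$-absolutely continuous measures carried by $A_1\subset D$, the pointwise inequalities (\ref{desc1'})--(\ref{desc2'}) translate into
\[g\mu+f\geqslant w\ \ (\xi-\mu)\text{-a.e.},\qquad g\mu+f\leqslant w\ \ \mu\text{-a.e.}\]
It therefore suffices to show that $\mu$ solves Problem~\ref{3.2} if and only if these two inequalities hold for some $w\in\mathbb R$. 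For sufficiency, I would fix any competitor $\tau\in\mathcal E_{g,f}^\xi(A_1,1;D)$ and expand
\[G_{g,f}(\tau)-G_{g,f}(\mu)=\|\tau-\mu\|^2_g+2\int(g\mu+f)\,d(\tau-\mu).\]
The Jordan decomposition of the signed measure $\tau-\mu$ satisfies $(\tau-\mu)^+\leqslant\xi-\mu$ (since $\tau\leqslant\xi$) and $(\tau-\mu)^-\leqslant\mu$ (since $\tau\geqslant 0$), hence
\[\int(g\mu+f)\,d(\tau-\mu)\geqslant w\bigl[(\tau-\mu)^+(A_1)-(\tau-\mu)^-(A_1)\bigr]=w\bigl(\tau(A_1)-\mu(A_1)\bigr)=0,\]
and therefore $G_{g,f}(\tau)\geqslant G_{g,f}(\mu)$.

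For necessity, I would exploit that convex combinations $(1-t)\mu+t\tau$, $t\in(0,1]$, lie in $\mathcal E_{g,f}^\xi(A_1,1;D)$; expanding the quadratic identity along the segment and letting $t\to 0^+$ yields the Gateaux condition $\int(g\mu+f)\,d(\tau-\mu)\geqslant 0$ for every admissible $\tau$. Define $w$ to be the $(\xi-\mu)$-essential infimum of $g\mu+f$, so the lower inequality holds by construction. If the upper inequality failed, i.e., $\mu(\{g\mu+f>w+2\varepsilon\})>0$ for some $\varepsilon>0$, then by definition of $w$ one would also have $(\xi-\mu)(\{g\mu+f<w+\varepsilon\})>0$. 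Invoking the standing hypotheses (\ref{as1}), $E_g(\xi|_K)<\infty$ for every compact $K\subset A_1^\circ$, and the lower boundedness of $f$, one can select compact sets $B\subset\{g\mu+f>w+2\varepsilon\}\cap A_1^\circ$ with $\mu(B)>0$ and $A\subset\{g\mu+f<w+\varepsilon\}\cap A_1^\circ$ with $(\xi-\mu)(A)>0$, on both of which $f$ is bounded and $\xi$ has finite $g$-energy. The mass-transport perturbation
\[\tau_\delta:=\mu-\frac{\delta}{\mu(B)}\,\mu|_B+\frac{\delta}{(\xi-\mu)(A)}\,(\xi-\mu)|_A,\qquad\delta>0\text{ small},\]
would then belong to $\mathcal E_{g,f}^\xi(A_1,1;D)$ and satisfy $\int(g\mu+f)\,d(\tau_\delta-\mu)\leqslant -\delta\varepsilon<0$, contradicting the Gateaux condition.

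The main obstacle is precisely this last construction: one must verify that the slice $(\xi-\mu)|_A$ is admissible (finite $g$-energy, $f$-integrable) and that $\tau_\delta\leqslant\xi$, which is exactly where the technical hypotheses (\ref{as1}), the local $g$-energy finiteness of $\xi$, and the lower boundedness of $f$ become indispensable. Finally, the stronger form of (\ref{desc2'}) asserted in Case~I (footnote) follows from the lower semicontinuity on $D$ of $W^\nu_{\alpha,f}=g\mu+f$ (as a sum of two l.s.c.\ functions there): the set $\{W^\nu_{\alpha,f}>w\}$ is open and $\nu^+$-null, hence disjoint from the support $S^{\nu^+}_D$.
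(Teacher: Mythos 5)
Your proposal is correct and follows essentially the same route as the paper: reduce via Theorems~\ref{th-equiv} and~\ref{lem-equ} to a variational characterization of the solution of the auxiliary $\alpha$-Green Problem~\ref{3.2}, prove sufficiency by the quadratic expansion of $G_{g,f}$ along with a splitting of $\tau-\mu$, and prove necessity by a mass-transport perturbation contradicting the Gateaux inequality. The only notable difference is that you re-derive the first-order (Gateaux) condition from the convex-quadratic expansion, whereas the paper simply quotes Lemma~\ref{lequiv} from \cite{DFHSZ}; your perturbation also moves only a $\delta$-fraction of mass while the paper's moves all of $\lambda|_{K_1}$, but these are equivalent devices, and the technical admissibility checks you flag (finiteness of $w$, $E_g((\xi-\mu)|_A)<\infty$, $f$-integrability of the slices) are handled in the paper in the same way, using (\ref{as1}), the local $g$-energy finiteness of $\xi$, and the lower boundedness of $f$.
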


\begin{proof} We begin by establishing Theorem~\ref{desc-th2} below, related to the (auxiliary)
Problem~\ref{3.2}. When investigating Problem~\ref{3.2} we shall
need the following assertion.

\begin{lem} [{\rm see \cite[Lemma~4.3]{DFHSZ}}]\label{lequiv} $\lambda\in\mathcal E_{g,f}^\xi(A_1,1;D)$
solves Problem\/~{\rm\ref{3.2}} if and only if
\[\bigl\langle W_{g,f}^\lambda,\mu-\lambda\bigr\rangle\geqslant0\mbox{ \ for all \ }
\mu\in\mathcal E_{g,f}^\xi(A_1,1;D),\]
where it is denoted\/ $W_{g,f}^\lambda:=g\lambda+f|_D$.
\end{lem}

\begin{thm}\label{desc-th2}Let\/ $f$ be lower bounded. A measure\/
$\lambda\in\mathcal E^\xi_{g,f}(A_1,1;D)$ is the\/
{\rm(}unique\/{\rm)} solution to Problem\/~{\rm\ref{3.2}} if and
only if there is\/ $w\in\mathbb R$ such that
\begin{align}\label{desc1}W^\lambda_{g,f}&\geqslant w\quad(\xi-\lambda)\mbox{-a.e.},\\
W^\lambda_{g,f}&\leqslant
w\quad\lambda\mbox{-a.e.}\label{desc2}\end{align}\end{thm}

\begin{proof} We permanently use the fact that both $\xi$ and $\lambda$ are $c_g$-absolutely continuous,
for they are of finite $\alpha$-Green energy if restricted to any compact $K\subset A^\circ_1$. For any
$c\in\mathbb R$ write
\[A_1^+(c):=\{x\in A_1:\ W_{g,f}^\lambda(x)>c\}\quad\text{and}\quad
A_1^-(c):=\{x\in A_1:\ W_{g,f}^\lambda(x)<c\}.\]

Suppose first that $\lambda$ solves Problem~\ref{3.2}. Inequality
(\ref{desc1}) is valid with $w=L$, where
\[L:=\sup\,\bigl\{q\in\mathbb R: \ W_{g,f}^\lambda\geqslant q\quad(\xi-\lambda)\mbox{-a.e.}\bigr\}.\]
In turn, (\ref{desc1}) with $w=L$ implies that $L<\infty$ because
$W_{g,f}^\lambda<\infty$ holds n.e.\ on $A_1^\circ$ and hence
$(\xi-\lambda)$-a.e.\ on $A_1^\circ$, while
$(\xi-\lambda)(A_1^\circ)>0$ by (\ref{as1}). Also note that
$L>-\infty$, for $W_{g,f}^\lambda$ is lower bounded on $A_1$ by
assumption.

We next proceed by establishing (\ref{desc2}) with $w=L$. Assume, on
the contrary, that this fails, i.e.\ $\lambda(A_1^+(L))>0$. Since
$W_{g,f}^\lambda$ is $\lambda$-measurable, one can choose
$c_1\in(L,\infty)$ so that $\lambda(A_1^+(c_1))>0$. At the same
time, as $c_1>L$, (\ref{desc1}) with $w=L$  yields
$(\xi-\lambda)(A_1^-(c_1))>0$. Therefore, there exist compact sets
$K_1\subset A_1^+(c_1)$ and $K_2\subset A_1^-(c_1)$ such that
\[0<\lambda(K_1)<(\xi-\lambda)(K_2).\]
Write $\tau:=(\xi-\lambda)|_{K_2}$; then $E_g(\tau)<\infty$. Since
$\bigl\langle W_{g,f}^\lambda,\tau\bigr\rangle\leqslant c_1\tau(K_2)<\infty$, we thus get
$\langle f,\tau\rangle<\infty$. Define
$\theta:=\lambda-\lambda|_{K_1}+b\tau$, where $b:=\lambda(K_1)/\tau(K_2)\in(0,1)$ by the last display.
Straightforward verification then shows that $\theta(A_1)=1$ and $\theta\leqslant\xi$, and hence
$\theta\in\mathcal E^\xi_{g,f}(A_1,1;D)$. On the other hand,
\begin{align*}
\langle W_{g,f}^\lambda,\theta-\lambda\rangle&=\langle
W_{g,f}^\lambda-c_1,\theta-\lambda\rangle\\&{}=-\langle
W_{g,f}^\lambda-c_1,\lambda|_{K_1}\rangle+b\langle
W_{g,f}^\lambda-c_1,\tau\rangle<0,\end{align*} which is impossible
by Lemma~\ref{lequiv} applied to $\lambda$ and $\mu=\theta$. This
establishes~(\ref{desc2}).

Conversely, let (\ref{desc1}) and (\ref{desc2}) both hold with some
$w\in\mathbb R$. Then $\lambda(A_1^+(w))=0$ and
$(\xi-\lambda)(A_1^-(w))=0$. For any $\nu\in\mathcal
E^\xi_{g,f}(A_1,1;D)$ we therefore obtain
\begin{align*}\langle
W_{g,f}^\lambda,\nu-\lambda\rangle&=\langle
W_{g,f}^\lambda-w,\nu-\lambda\rangle\\
&{}=\langle W_{g,f}^\lambda-w,\nu|_{A_1^+(w)}\rangle+\langle
W_{g,f}^\lambda-w,(\nu-\xi)|_{A_1^-(w)}\rangle\geqslant0.\end{align*}
Application of Lemma~\ref{lequiv} shows that, indeed, $\lambda$ is
the solution to Problem~\ref{3.2}.
\end{proof}

To complete the proof of Theorem~\ref{desc-th1}, fix
$\nu\in\widetilde{\mathcal H}$, resp.\ $\nu\in\mathcal
E^\xi_{\alpha,f}(\mathbf A,\mathbf 1)$. Note that
\begin{equation}\label{inclusion}\nu^+\in\mathcal E^\xi_{g,f}(A_1,1;D),\end{equation}
which follows from (\ref{def-H}) if $\nu\in\widetilde{\mathcal H}$,
and otherwise it is obvious by $E_g(\nu^+)\leqslant
E_\alpha(\nu^+)$.

Assume first that the given $\nu$ is the (unique) solution to the
$\widetilde{\mathcal H}$-problem, resp.\ Problem~\ref{3.1}.
According to Theorem~\ref{lem-equ}, resp.\ Theorem~\ref{th-equiv},
then (\ref{sol-bal}) holds with $\nu^+=\lambda$, where $\lambda$ is
the (unique) solution to Problem~\ref{3.2}. Therefore by
Lemma~\ref{l-hatg}
\[W^\nu_{\alpha,f}=g\lambda+f|_D=W_{g,f}^\lambda\text{ \ n.e.\ on\ }D.\]
Combined with (\ref{desc1}) and (\ref{desc2}), this leads to
(\ref{desc1'}) and (\ref{desc2'}) (with the same $w$ as in
(\ref{desc1}) and (\ref{desc2})). Here we have used the
$c_\alpha$-absolute continuity of $\lambda$ and $\xi$, see
footnote~\ref{foot-ga}.

Conversely, let for the given $\nu$ all the relations
(\ref{sol-bal}), (\ref{desc1'}), and (\ref{desc2'}) hold true. By
Lemma~\ref{l-hatg}, then $W^\nu_{\alpha,f}=W_{g,f}^{\nu^+}$ n.e.\ on
$D$, where $\nu^+$ belongs to $\mathcal E^\xi_{g,f}(A_1,1;D)$
according to relation (\ref{inclusion}). In view of the
$c_\alpha$-absolute continuity of $\nu^+$ and $\xi$, we thus see
from (\ref{desc1'}) and (\ref{desc2'}) that $W_{g,f}^{\nu^+}$
satisfies (\ref{desc1}) and (\ref{desc2}) (with the same $w$ as in
(\ref{desc1'}) and (\ref{desc2'})), which by Theorem~\ref{desc-th2}
implies that $\nu^+$ is the solution $\lambda$ to Problem~\ref{3.2}.
Substituting $\nu^+=\lambda$ into (\ref{sol-bal}) and then applying
Theorem~\ref{lem-equ}, resp.\ Theorem~\ref{th-equiv}, we infer that
$\nu$ is the solution to the $\widetilde{\mathcal H}$-problem,
resp.\ Problem~\ref{3.1}, thus completing the proof of
Theorem~\ref{desc-th1}.\end{proof}

\begin{thm}\label{cor-desc}Let\/ $f=0$, and assume that\/ $\nu\in\widetilde{\mathcal H}$,
resp.\ $\nu\in\mathcal E^\xi_{\alpha,f}(\mathbf A,\mathbf 1)$,
solves the\/ $\widetilde{\mathcal H}$-problem, resp.\
Problem\/~{\rm\ref{3.1}}. Then, and only then, this\/ $\nu$
satisfies\/ {\rm(\ref{sol-bal})} as well as the following two
relations
\begin{align}\label{desc1''}\kappa_\alpha\nu&=w\quad(\xi-\nu^+)\mbox{-a.e.},\\
\kappa_\alpha\nu&\leqslant w\quad{on\ }\mathbb R^n\label{desc2''}\end{align}
with some\/ $w\in(0,\infty)$. If moreover\/ $\alpha<2$ and\/ $m(D^c)>0$, it is also necessary that
\begin{equation}\label{s1}
S_D^{\nu^+}=S_D^{\xi}.
\end{equation}
\end{thm}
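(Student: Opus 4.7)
The plan is to deduce this theorem directly from Theorem~\ref{desc-th1} specialized to $f=0$, using domination principles to tighten the one-sided inequalities and, for the final claim, invoking the nonlocal strong maximum principle for $\alpha$-harmonic functions.

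Substituting $f=0$ into Theorem~\ref{desc-th1} gives $W^\nu_{\alpha,f}=\kappa_\alpha\nu$, so $\nu$ is a minimizer if and only if (\ref{sol-bal}) holds together with $\kappa_\alpha\nu\geqslant w$ $(\xi-\nu^+)$-a.e.\ and $\kappa_\alpha\nu\leqslant w$ $\nu^+$-a.e.\ for some $w\in\mathbb R$. It is therefore enough to prove, under (\ref{sol-bal}), that these one-sided relations are equivalent to the stronger (\ref{desc1''})--(\ref{desc2''}) with $w\in(0,\infty)$; the implication from (\ref{desc1''})--(\ref{desc2''}) to the Theorem~\ref{desc-th1} conditions being trivial. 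For the reverse, by (\ref{sol-bal}) and Lemma~\ref{l-hatg} we have $\kappa_\alpha\nu=g\nu^+$ n.e.\ on $D$, and the $c_\alpha$-absolute continuity of $\nu^+$ recasts $\kappa_\alpha\nu\leqslant w$ $\nu^+$-a.e.\ as $g\nu^+\leqslant w$ $\nu^+$-a.e. The $\alpha$-Green domination (Maria--Frostman) principle, valid for $\alpha\leqslant 2$ and derivable from the corresponding principle for $\kappa_\alpha$ via $g=\kappa_\alpha-\kappa_\alpha(\cdot)^{D^c}$, promotes this to $g\nu^+\leqslant w$ on $D$, i.e., $\kappa_\alpha\nu\leqslant w$ on $D$. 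Moreover (\ref{bal-eq}) together with (\ref{sol-bal}) yields $\kappa_\alpha\nu=0$ n.e.\ on $D^c$, so (\ref{desc2''}) follows once $w>0$ is known. Positivity of $w$ is easy: if $w\leqslant 0$ then $g\geqslant 0$ combined with $g\nu^+\leqslant w$ $\nu^+$-a.e.\ forces $\|\nu^+\|_g^2=0$, whence $\nu^+=0$ by strict positive definiteness of $g$, contradicting $\nu^+(A_1)=1$. Then (\ref{desc1''}) is immediate from (\ref{desc2''}) and the lower bound $\kappa_\alpha\nu\geqslant w$ $(\xi-\nu^+)$-a.e.

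For the final claim assume $\alpha<2$ and $m(D^c)>0$. Since $\nu^+\leqslant\xi$ we have $S^{\nu^+}_D\subset S^\xi_D$; suppose, for contradiction, that $x_0\in S^\xi_D\setminus S^{\nu^+}_D$, and pick an open neighborhood $V\subset D$ of $x_0$ with $V\cap S^{\nu^+}=\varnothing$. Since $S^{\nu^-}\subset D^c$, also $V\cap S^\nu=\varnothing$, so $\kappa_\alpha\nu=\kappa_\alpha\nu^+-\kappa_\alpha\nu^-$ is real-analytic and $\alpha$-harmonic on $V$. On $V$, $\nu^+|_V=0$ forces $(\xi-\nu^+)|_V=\xi|_V$, so by (\ref{desc1''}) $\kappa_\alpha\nu=w$ $\xi|_V$-a.e.; continuity of $\kappa_\alpha\nu$ on $V$ then extends this equality to the whole support of $\xi|_V$, which contains $x_0$. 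Thus $\kappa_\alpha\nu$ attains its global upper bound $w$ at the interior point $x_0\in V$ of its $\alpha$-harmonic domain. For $\alpha<2$ the strong maximum principle for $\alpha$-harmonic functions is \emph{nonlocal}: applied to the nonnegative function $u:=w-\kappa_\alpha\nu$ (which is $\alpha$-harmonic on $V$, vanishes at $x_0$, and is $\geqslant 0$ globally), it forces $u=0$ Lebesgue-a.e.\ on $\mathbb R^n$. But (\ref{bal-eq}) and (\ref{sol-bal}) give $\kappa_\alpha\nu=0$ n.e.\ — and hence Lebesgue-a.e.\ — on $D^c$, so $m(D^c)>0$ produces a subset of $D^c$ of positive Lebesgue measure on which $0=\kappa_\alpha\nu=w$, contradicting $w>0$.

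The main obstacle is the invocation of the \emph{nonlocal} strong maximum principle for $\alpha$-harmonic functions, which is precisely what makes the hypothesis $\alpha<2$ essential: for $\alpha=2$ the classical strong maximum principle gives only constancy of $\kappa_\alpha\nu$ on the connected component of $V$ containing $x_0$, which need not reach $D^c$ and so yields no contradiction, while the assumption $m(D^c)>0$ is required to convert the n.e.\ vanishing of $\kappa_\alpha\nu$ on $D^c$ into vanishing on a set of positive Lebesgue measure.
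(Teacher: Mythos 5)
Your argument follows the same high-level plan as the paper (reduce to Theorem~\ref{desc-th1} with $f=0$, use Lemma~\ref{l-hatg} to write $W^\nu_{\alpha,f}=\kappa_\alpha\nu=g\nu^+$ on $D$, deduce $w>0$, upgrade the $\nu^+$-a.e.\ inequality, then argue the support statement by a nonlocal maximum principle), and the second half of your proof (establishing (\ref{s1})) is essentially the paper's argument restated with $u=w-\kappa_\alpha\nu$ in place of the paper's comparison of the $\alpha$-harmonic $\kappa_\alpha\lambda$ with the $\alpha$-superharmonic $w+\kappa_\alpha\lambda'$; both rest on the same Landkof Theorem~1.28. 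However, the upgrade from ``$\nu^+$-a.e.'' to (\ref{desc2''}) takes a genuinely different route, and there are two problems with it.

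First, you invoke a ``$\alpha$-Green domination (Maria--Frostman) principle'' for $g=g^\alpha_D$ and assert it is derivable from the Riesz one via $g=\kappa_\alpha-\kappa_\alpha(\cdot)^{D^c}$. That derivation does not work directly here: the measure $\nu^+$ is only guaranteed to have finite $\alpha$-\emph{Green} energy, and indeed the entire point of this paper (and the counterexample in \cite[Appendix]{DFHSZ2}) is that such a $\nu^+$ may fail to have finite $\alpha$-Riesz energy, so the Riesz complete maximum principle is not directly applicable to $\nu^+$. The paper circumvents exactly this by an exhaustion: it passes to $\nu^+_j:=\nu^+|_{K_j}$, which has compact support in $D$ and hence (by Lemma~\ref{eq-r-g}) finite $\alpha$-Riesz energy, applies Landkof's Theorems~1.27 and~1.29 to $\nu^+_j$ against $w+\kappa_\alpha\nu^-$, and then lets $j\to\infty$. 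Either you need to supply an independent reference/proof of the complete maximum principle for the $\alpha$-Green kernel of an arbitrary domain, or you need the same exhaustion device; as written the step is a gap.

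Second, even granting the Green domination step, your route yields $\kappa_\alpha\nu\leqslant w$ on $D$ together with $\kappa_\alpha\nu=0$ n.e.\ on $D^c$, i.e., you obtain (\ref{desc2''}) only $c_\alpha$-n.e.\ on $\mathbb R^n$ rather than everywhere. The theorem claims $\kappa_\alpha\nu\leqslant w$ \emph{on $\mathbb R^n$}, and the paper's exhaustion argument delivers exactly the everywhere inequality $\kappa_\alpha\nu^+\leqslant w+\kappa_\alpha\nu^-$ by the maximum principle, then passes to the limit. To patch your version one would still need to observe that both $\kappa_\alpha\nu^+$ and $w+\kappa_\alpha\nu^-$ are $\alpha$-superharmonic and that a n.e.\ comparison between two $\alpha$-superharmonic functions upgrades to an everywhere comparison; this step should be made explicit (and in any case it brings you back to the same Landkof machinery the paper uses directly, so the detour through Green domination buys nothing).

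Apart from these two points, your positivity argument for $w$ and your derivation of (\ref{desc1''}) from (\ref{desc1'}) and (\ref{desc2''}) match the paper, and the argument for (\ref{s1}) is sound and essentially identical in substance to the paper's.
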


\begin{proof} As seen from Theorem~\ref{desc-th1}, the former part of the theorem will be established
once we have shown that the number $w$ from (\ref{desc1'}) and
(\ref{desc2'}) is now ${}>0$, while the quoted relations can be
rewritten as (\ref{desc1''}) and (\ref{desc2''}). We obtain from
(\ref{sol-bal}) in view of Lemma~\ref{l-hatg}
\begin{equation}\label{repr-eqqq}W^\nu_{\alpha,f}=\kappa_\alpha\nu=g\nu^+\text{ \ on\ }D.\end{equation}
Substituting this into (\ref{desc2'}) gives $w\in(0,\infty)$, while
(\ref{desc2'}) itself now takes the form
\[\kappa_\alpha\nu^+\leqslant w+\kappa_\alpha\nu^-\quad\nu^+\mbox{-a.e.}\]
Consider an exhaustion of $D$ by an increasing sequence $\{K_j\}_{j\in\mathbb N}$ of compact sets,
and denote $\nu^+_j:=\nu^+|_{K_j}$. The last display then remains valid with $\nu^+$ replaced by
$\nu^+_j$, i.e.
\[\kappa_\alpha\nu^+_j\leqslant w+\kappa_\alpha\nu^-\quad\nu^+_j\mbox{-a.e.}\]
Since $\nu^+_j\in\mathcal E_\alpha^+(K_j;\mathbb R^n)$ and $w>0$,
the former being clear from $E_g(\nu^+_j)\leqslant
E_g(\nu^+)<\infty$ and Lemma~\ref{eq-r-g}, application of
\cite[Theorems~1.27, 1.29]{L} shows that the preceding display holds
in fact everywhere on $\mathbb R^n$. As
$\kappa_\alpha\nu^+_j\uparrow\kappa_\alpha\nu^+$ pointwise on
$\mathbb R^n$, letting here $j\to\infty$ results in (\ref{desc2''}).
Combining (\ref{desc2''}) and (\ref{desc1'})
establishes~(\ref{desc1''}).

Assuming now that the hypotheses of the latter part of the theorem
be fulfilled, we
 proceed by establishing (\ref{s1}), which according
to Theorem~\ref{lem-equ}, resp.\ Theorem~\ref{th-equiv}, can be
rewritten in the form  $S_D^{\lambda}=S_D^{\xi}$, where $\lambda$ is
the solution (which exists) to Problem~\ref{3.2}. On the contrary,
let there be $x_0\in S_D^{\xi}$ such that $x_0\not\in
S^{\lambda}_D$. Then one can choose $r>0$ so that
$\overline{B}(x_0,r)\subset D$ and $\overline{B}(x_0,r)\cap
S^{\lambda}_D=\varnothing$. It follows that
$(\xi-\lambda)(B(x_0,r)\cap S_D^{\xi})>0$. By (\ref{desc1''}), there
is therefore $x_1\in B(x_0,r)\cap S_D^{\xi}$ possessing the property
\begin{equation}\label{bempty}\kappa_\alpha\lambda(x_1)=w+\kappa_\alpha\lambda'(x_1).\end{equation}
As $\kappa_\alpha\lambda$ is $\alpha$-harmonic on $B(x_0,r)$ and
continuous on $\overline{B}(x_0,r)$, while $w+\kappa_\alpha\lambda'$
is $\alpha$-sup\-er\-harmonic on $\mathbb R^n$, we see from
(\ref{desc2''}) and (\ref{bempty}) with the aid of
\cite[Theorem~1.28]{L} that
\[\kappa_\alpha\lambda=w+\kappa_\alpha\lambda'\quad m\mbox{-a.e.\ on\ }\mathbb R^n.\]
This implies $w=0$, for $\kappa_\alpha\lambda=\kappa_\alpha\lambda'$ holds n.e.\ on $D^c$,
and hence $m$-a.e.\ on $D^c$. A
contradiction.\end{proof}

\begin{rem}Let $f=0$, and let $\nu$ solve the $\widetilde{\mathcal H}$-problem,
resp.\ Problem~\ref{3.1}, with a \textsl{bounded\/} constraint
$\xi$. Combining (\ref{desc1''}) and (\ref{repr-eqqq}) shows that
$g\nu^+=w$ holds $(\xi-\nu^+)$-a.e. Integrating this equality with
respect to the (bounded positive) measure $\xi-\nu^+$ implies that
the number $w\in(0,\infty)$ from Theorem~\ref{cor-desc} can be
written in the form
\[w=\frac{E_g(\nu^+,\xi-\nu^+)}{(\xi-\nu^+)(A_1)}.\]
Thus $E_g(\nu^+,\xi-\nu^+)<\infty$, though in general $\xi\notin\mathcal E^+_g(D)$.
\end{rem}

\subsection{Variational inequalities in the unconstrained minimum $\alpha$-Riesz weak energy
problems}\label{sec-var2-Riesz} In this section we shall consider the unconstrained case ($\sigma=\infty$).
The results obtained then take a simpler form if compared with those in the constrained case, while
they provide us with much more detailed information about the
potentials and the supports of the minimizers. When $\sigma=\infty$ serves as a superscript,
we shall omit it in the notations.

\begin{thm}\label{desc-infty}Let\/ $\nu\in\widetilde{\mathcal H}$,
resp.\ $\nu\in\mathcal E_{\alpha,f}(\mathbf A,\mathbf 1)$, be given.
Then\/ $\nu$ solves the\/ $\widetilde{\mathcal H}$-problem, resp.\
Problem\/~{\rm\ref{3.1}}, if and only if it satisfies\/
{\rm(\ref{sol-bal})} as well as the following two relations
\begin{align}\label{desc1infty}W^\nu_{\alpha,f}&\geqslant w'\text{ \ n.e.\ on\ }A_1,\\
W^\nu_{\alpha,f}&=w'\text{ \ $\nu^+$-a.e.},\label{desc2infty}\end{align}
where\/ $w'\in\mathbb R$.
\end{thm}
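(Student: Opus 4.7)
The plan is to reduce the statement, via Theorem~\ref{lem-equ} (resp.\ Theorem~\ref{th-equiv}), to an equivalent variational characterization of the unique solution $\lambda$ to the unconstrained Problem~\ref{3.2}, and then mirror the proof of Theorem~\ref{desc-th2} in the unconstrained setting. More precisely, $\nu$ solves the $\widetilde{\mathcal H}$-problem (resp.\ Problem~\ref{3.1}) if and only if (\ref{sol-bal}) holds and $\lambda:=\nu^+$ solves Problem~\ref{3.2} with $\sigma=\infty$; moreover, Lemma~\ref{l-hatg} combined with (\ref{sol-bal}) gives $W^\nu_{\alpha,f}=W^{\lambda}_{g,f}$ n.e.\ on $D$, and by footnote~\ref{foot-ga} the notions of $c_\alpha$- and $c_g$-n.e.\ coincide on subsets of $D$. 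Thus it suffices to prove: $\lambda\in\mathcal E_{g,f}(A_1,1;D)$ is the solution to the unconstrained Problem~\ref{3.2} if and only if there is $w'\in\mathbb R$ with $W^\lambda_{g,f}\geqslant w'$ $c_g$-n.e.\ on $A_1$ and $W^\lambda_{g,f}=w'$ $\lambda$-a.e.

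For necessity I would invoke the unconstrained analog of Lemma~\ref{lequiv} (whose derivation from convexity and the pre-Hilbert structure of $\mathcal E_g(D)$ is unchanged when $\xi$ is dropped), yielding $\langle W^\lambda_{g,f},\mu-\lambda\rangle\geqslant0$ for every $\mu\in\mathcal E_{g,f}(A_1,1;D)$. Setting $w':=\langle W^\lambda_{g,f},\lambda\rangle=\|\lambda\|_g^2+\langle f,\lambda\rangle\in\mathbb R$, this becomes $\int W^\lambda_{g,f}\,d\mu\geqslant w'$ for every admissible $\mu$. To pass from this global bound to the pointwise n.e.\ inequality, I argue by contradiction: if the set $N:=\{x\in A_1:W^\lambda_{g,f}(x)<w'-\varepsilon\}$ had positive $c_g$-capacity for some $\varepsilon>0$, then---noting that $A_1\setminus A_1^\circ$ is $c_\alpha$-negligible in Case~II by (\ref{1.3.10}) and contributes $W^\lambda_{g,f}=+\infty$ in Case~I---one selects, via (\ref{compact}), a compact $K\subset N\cap A_1^\circ$ with $0<c_g(K)<\infty$ on which $f$ is bounded; the normalized $g$-capacitary measure $\mu_{K,g}/\mu_{K,g}(K)$ (existing by Remark~\ref{remark} for the perfect kernel $g$) is then admissible and violates the global inequality. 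Integrating $W^\lambda_{g,f}\geqslant w'$ against the $c_g$-absolutely continuous measure $\lambda$ and comparing with $\int W^\lambda_{g,f}\,d\lambda=w'$ finally forces $W^\lambda_{g,f}=w'$ $\lambda$-a.e.

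For sufficiency, given both relations, any admissible $\mu$ is $c_g$-absolutely continuous, so the n.e.\ lower bound integrates to $\int W^\lambda_{g,f}\,d\mu\geqslant w'=\int W^\lambda_{g,f}\,d\lambda$; the unconstrained Lemma~\ref{lequiv} then gives that $\lambda$ solves Problem~\ref{3.2}, and Theorems~\ref{lem-equ}/\ref{th-equiv} transport the conclusion back to $\nu$. The main obstacle I anticipate is the contradiction step, namely the careful handling of $A_1\setminus A_1^\circ$ separately in Cases~I and~II and the verification that a $g$-capacitary measure on a compact $K\subset A_1^\circ$ with $|f|$ bounded on $K$ is indeed admissible; once this is in place, the remaining steps are routine variational-inequality manipulations built on the reduction to Problem~\ref{3.2}.
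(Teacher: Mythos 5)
Your overall architecture matches the paper exactly: reduce to Problem~\ref{3.2} via Theorems~\ref{lem-equ} and~\ref{th-equiv}, then characterize the solution of the unconstrained Problem~\ref{3.2} via variational inequalities. Where you genuinely diverge is in how you handle the latter step: the paper simply cites Theorem~\ref{desc-th2-infty} as a special case of \cite[Theorems~7.1, 7.3]{ZPot2} (see also \cite{Z5a}), while you supply a self-contained proof built on an unconstrained analog of Lemma~\ref{lequiv} and a contradiction argument with $g$-capacitary test measures. This is a legitimate and arguably more transparent route; note that it is also quite different in spirit from the proof of the constrained Theorem~\ref{desc-th2}, which relies on a two-compact perturbation $\theta=\lambda-\lambda|_{K_1}+b\tau$ with $\tau=(\xi-\lambda)|_{K_2}$, a device that has no role once $\xi$ is dropped.

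One small point deserves to be spelled out. You write that one ``selects a compact $K\subset N\cap A_1^\circ$ with $0<c_g(K)<\infty$ on which $f$ is bounded,'' but $A_1^\circ$ only guarantees $|f|<\infty$ pointwise, not local boundedness. To make the test measure admissible you must pass through the sets $E_k:=\{x\in A_1:|f(x)|\leqslant k\}$, using \cite[Lemma~2.3.3]{F1} to get $c_g(N\cap A_1^\circ)=\lim_kc_g(N\cap E_k)$ and then (\ref{compact}) to find a compact $K\subset N\cap E_{k_0}$ with $c_g(K)>0$ — exactly as the paper does in the proof of Lemma~\ref{suff-fin}. You acknowledge this gap at the end, so it is anticipated rather than overlooked; filling it with the $E_k$-argument, and noting that $N\subset A_1^\circ$ automatically in Case~I (since $W_{g,f}^\lambda=+\infty$ off $A_1^\circ$) while $A_1\setminus A_1^\circ$ is capacity-null in Case~II, completes a clean direct proof.
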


\begin{proof} We first establish the following theorem, related to Problem~\ref{3.2} (with $\sigma=\infty$).

\begin{thm}\label{desc-th2-infty} A measure\/ $\lambda\in\mathcal E^+_{g,f}(A_1,1;D)$ is the\/
{\rm(}unique\/{\rm)} solution to Problem\/~{\rm\ref{3.2}} if and
only if there is\/ $w'\in\mathbb R$ such that\/\footnote{Suppose
Case~I holds. Then (\ref{eqq2}) leads to $W^\lambda_{g,f}\leqslant
w'$ on $S_D^\lambda$, which together with (\ref{eqq2'}) gives
$W^\lambda_{g,f}=w'$ n.e.\ on $S_D^\lambda$. Similarly,
(\ref{desc2infty}) in Theorem~\ref{desc-infty} takes the following
apparently stronger form: $W^\nu_{\alpha,f}=w'$ n.e.\ on
$S_D^{\nu^+}$. Also note that the number $w'$ from
Theorems~\ref{desc-infty} and~\ref{desc-th2-infty} is then~${}>0$.}
\begin{align}\label{eqq2'}W^\lambda_{g,f}&\geqslant w'\text{ \ n.e.\ on  }A_1,\\
\label{eqq2}W^\lambda_{g,f}&=w'\text{ \
$\lambda$-a.e.}\end{align}\end{thm}

\begin{proof}This theorem is in fact a very particular case of \cite[Theorems~7.1, 7.3]{ZPot2}
(see also Theorem~1 and Proposition~1 in the earlier paper \cite{Z5a}).\end{proof}

Theorem~\ref{desc-infty} can now be obtained from
Theorem~\ref{desc-th2-infty} with the aid of Theorems~\ref{th-equiv}
and~\ref{lem-equ} in the same manner as Theorem~\ref{desc-th1} has
been derived from Theorem~\ref{desc-th2}.\end{proof}

\begin{rem}The number $w'$ from Theorems~\ref{desc-infty} and~\ref{desc-th2-infty} is unique
(whenever it exists) and can be written in the form
\[w'=\bigl\langle W^\nu_{\alpha,f},\nu^+\bigr\rangle=\bigl\langle W^\lambda_{g,f},\lambda\bigr\rangle,\]
$\nu$ and $\lambda$ being as indicated in Theorems~\ref{desc-infty} and~\ref{desc-th2-infty}, respectively.
\end{rem}

\begin{defn}\label{def-m-c}For a generalized condenser $\mathbf A$ in $\mathbb R^n$,
$\theta=\theta_{\mathbf A,\alpha}\in\mathfrak M(\mathbf A)$ is said
to be a \textsl{condenser measure\/} if $\kappa_\alpha\theta$ takes
the value $1$ and $0$ n.e.\ on $A_1$ and $A_2$, respectively, and
\[0\leqslant\kappa_\alpha\theta\leqslant1\text{ \ n.e.\ on\ }\mathbb R^n.\]\end{defn}

As seen from the above definition and \cite[p.~178, Remark]{L}, a
condenser measure is \textsl{unique\/} provided that it is
$c_\alpha$-absolutely continuous.

If $A_1$ and $A_2$ are compact disjoint sets, then the existence of a condenser measure was established
by Kishi~\cite{Ki}, actually even in the general setting of a function kernel on a locally compact
Hausdorff space. See also \cite{D3}, \cite{L}, \cite{Bl}, \cite{Berg} where the existence of condenser
potentials was analyzed in the framework of Dirichlet spaces. An intimate relation between a condenser
measure $\theta_{\mathbf A,\alpha}$, $\mathbf A$ being a generalized condenser in $\mathbb R^n$, and
the solution to the $\ddot{\mathcal E}_\alpha(\mathbf A,\mathbf 1)$-problem (see Remark~\ref{rem-FZ-Pot})
has been established in our preceding study~\cite{FZ-Pot}.

\begin{thm}\label{cor-infty-2}Let\/ $f=0$, and let\/ $\nu\in\widetilde{\mathcal H}$, resp.\
$\nu\in\mathcal E_{\alpha,f}(\mathbf A,\mathbf 1)$, be given. Then the following four assertions are
equivalent:
\begin{itemize}
\item[{\rm (i)}] $\nu$ solves the\/ $\widetilde{\mathcal H}$-problem, resp.\ Problem\/~{\rm\ref{3.1}}.
\item[{\rm (ii)}] $\nu$ satisfies\/~{\rm(\ref{sol-bal})} as well as the following two relations
\begin{align}\label{cor01}\kappa_\alpha\nu&=w'\text{ \ on \ }A_1\setminus I_{A_1,\alpha},\\
\label{cor02}\kappa_\alpha\nu&\leqslant w'\text{ \ on\/ \ }\mathbb R^n,
\end{align}
where\/ $w'\in(0,\infty)$.
\item[{\rm (iii)}] There is a\/ {\rm(}unique\/{\rm)} bounded\/ $c_\alpha$-absolutely
continuous condenser
 measure\/~$\theta_{\mathbf A,\alpha}$.
\item[{\rm (iv)}] $c_g(A_1)<\infty$.
\end{itemize}
If any of these\/ {\rm(i)-(iv)} holds, then the number\/ $w'$
appearing in\/ {\rm(ii)} is unique and given by\/\footnote{If the
separation condition (\ref{dist}) holds, then $\dot{E}_\alpha(\nu)$
in (\ref{cor03}) can be replaced by $E_\alpha(\nu)$ (see
Lemma~\ref{l-st}).}
\begin{equation}\label{cor03}w'=\dot{E}_\alpha(\nu)=E_g(\mu_{A_1,g})=1/c_g(A_1).\end{equation}
Furthermore, $\nu$ and\/ $\theta_{\mathbf A,\alpha}$ are related to one another by the formula
\[\theta_{\mathbf A,\alpha}=c_g(A_1)\nu=\gamma_{A_1,g}-\gamma_{A_1,g}'.\]
Here\/ $\mu_{A_1,g}$, resp.\/ $\gamma_{A_1,g}$, is the\/ $g$-capacitary, resp.\ $g$-equilibrium,
measure on\/ $A_1$.
\end{thm}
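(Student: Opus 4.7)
The strategy is to pivot everything on the explicit form $\nu=(\gamma_{A_1,g}-\gamma_{A_1,g}')/c_g(A_1)$, which Theorem~\ref{cor-un-un} already supplies whenever a minimizer exists. That theorem is exactly (i)~$\Leftrightarrow$~(iv) together with the displayed representation of $\nu$, so what remains is the loop (iv)~$\Leftrightarrow$~(iii) and the loop (i)~$\Leftrightarrow$~(ii), and then the identification of $w'$. The computational engine will be Lemma~\ref{l-hatg} (converting $\alpha$-Riesz potentials of signed measures on $D$ into $\alpha$-Green potentials) together with the sharp statements of Theorem~\ref{th-equi}.

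For (iv)~$\Rightarrow$~(iii), I propose $\theta:=\gamma_{A_1,g}-\gamma_{A_1,g}'$ as the required condenser measure. Boundedness is inherited from $\gamma_{A_1,g}(D)=c_g(A_1)<\infty$ together with the mass-conservation Theorem~\ref{bal-mass-th}, and $c_\alpha$-absolute continuity from that of $\gamma_{A_1,g}$ (via footnote~\ref{foot-ga}) and of its balayage \cite[Corollary~3.19]{FZ-Fin}. Checking Definition~\ref{def-m-c}: Lemma~\ref{l-hatg} gives $\kappa_\alpha\theta=g\gamma_{A_1,g}$ n.e.\ on $D$, which by Theorem~\ref{th-equi} lies in $[0,1]$ on $D$ and equals $1$ n.e.\ on $A_1$; and (\ref{bal-eq}) gives $\kappa_\alpha\theta=0$ n.e.\ on $D^c$. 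Uniqueness is the comment following Definition~\ref{def-m-c}. For (iii)~$\Rightarrow$~(iv), from $\kappa_\alpha\theta^+=\kappa_\alpha\theta^-$ n.e.\ on $D^c$ and the $c_\alpha$-absolute continuity of $\theta^\pm$ with $\theta^-$ supported in $D^c$, the uniqueness clause of (\ref{bal-eq}) forces $\theta^-=(\theta^+)'$. Setting $\mu:=\theta^+$, Lemma~\ref{l-hatg} then produces $g\mu=\kappa_\alpha\theta=1$ n.e.\ on $A_1$, so $E_g(\mu)=\mu(A_1)<\infty$. Testing against any $\nu\in\mathcal E_g^+(A_1,1;D)$ via the $c_g$-absolute continuity of $\nu$ gives $\langle\mu,\nu\rangle_g=\nu(A_1)=1$, and Cauchy--Schwarz combined with (\ref{cap-def}) yields $c_g(A_1)\leqslant\|\mu\|_g^2<\infty$.

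The implication (i)~$\Rightarrow$~(ii) is where the only real work lies. With $\nu=\mu_{A_1,g}-\mu_{A_1,g}'$ and $w':=1/c_g(A_1)$, (\ref{sol-bal}) is automatic, and (\ref{cor01}) follows from Lemma~\ref{l-hatg} combined with the sharp equality $g\gamma_{A_1,g}=1$ on $A_1\setminus I_{A_1,\alpha}$ from (\ref{geq-2}). The inequality (\ref{cor02}), by contrast, holds only \emph{n.e.} if one merely invokes Theorem~\ref{desc-infty}; upgrading it to everywhere on $\mathbb R^n$ is the main technical obstacle, and I would treat it by the device used in the proof of Theorem~\ref{cor-desc}. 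Exhaust $D$ by compact sets $\{K_j\}$ and set $\mu_j:=\mu_{A_1,g}|_{K_j}$; each $\mu_j$ has compact support in $D$, so by (\ref{dist}) and Lemma~\ref{eq-r-g} has finite standard $\alpha$-Riesz energy. From $g\mu_j\leqslant g\mu_{A_1,g}\leqslant w'$ on $D$ and Lemma~\ref{l-hatg} one has $\kappa_\alpha\mu_j\leqslant w'+\kappa_\alpha\mu_j'$ $\mu_j$-a.e., and Landkof's domination principle \cite[Theorems~1.27, 1.29]{L} propagates this to all of $\mathbb R^n$. Monotone convergence as $j\to\infty$ (with $\mu_j'\uparrow\mu_{A_1,g}'$ from the proof of Theorem~\ref{l-aux}) then delivers (\ref{cor02}) in the form $\kappa_\alpha\nu^+\leqslant w'+\kappa_\alpha\nu^-$ everywhere on $\mathbb R^n$. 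The converse (ii)~$\Rightarrow$~(i) is then cheap: (\ref{cor01}) together with $c_\alpha(I_{A_1,\alpha})=0$ gives $\kappa_\alpha\nu=w'$ n.e.\ on $A_1$, which is simultaneously (\ref{desc1infty}) and, via $c_\alpha$-absolute continuity of $\nu^+$, (\ref{desc2infty}), whence Theorem~\ref{desc-infty} yields (i).

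The closing formulas are then short. The remark after Theorem~\ref{desc-infty} supplies $w'=\int\kappa_\alpha\nu\,d\nu^+$, which by Lemma~\ref{l-hatg} equals $E_g(\mu_{A_1,g})$, and by $\mu_{A_1,g}=\gamma_{A_1,g}/c_g(A_1)$ and Theorem~\ref{th-equi} equals $1/c_g(A_1)$. The identity $\dot E_\alpha(\nu)=E_g(\mu_{A_1,g})$ is just (\ref{2.8}) of Theorem~\ref{thm} applied with $\mu=\mu_{A_1,g}$. Finally, $\theta_{\mathbf A,\alpha}=c_g(A_1)\nu=\gamma_{A_1,g}-\gamma_{A_1,g}'$ is the rescaling of the candidate constructed in the (iv)~$\Rightarrow$~(iii) step. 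As noted, the only nontrivial point in the whole argument is the everywhere upgrade in (\ref{cor02}), which crucially requires the domination principle to be invoked \emph{before} taking the monotone limit in $j$.
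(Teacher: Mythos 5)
Your proposal is correct and follows the same basic architecture as the paper's proof: pivot on the explicit identification $\nu=\mu_{A_1,g}-\mu_{A_1,g}'$ supplied by Theorem~\ref{cor-un-un} (giving (i)~$\Leftrightarrow$~(iv)), use Lemma~\ref{l-hatg} and the sharpened equilibrium statement~(\ref{geq-2}) for (\ref{cor01}), and upgrade (\ref{cor02}) from an n.e.\ statement to an everywhere statement by restricting to compacta and invoking Landkof's domination principle before passing to the monotone limit. That last upgrade is indeed the only genuinely delicate step and you have identified it correctly.

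You do take two small but genuinely different routes. First, you prove (iv)~$\Rightarrow$~(iii) directly from Theorem~\ref{th-equi} (the candidate $\theta:=\gamma_{A_1,g}-\gamma_{A_1,g}'$ has $\kappa_\alpha\theta=g\gamma_{A_1,g}\in[0,1]$ on $D$ and $=0$ n.e.\ on $D^c$), whereas the paper derives (iii) from the already-established relations (\ref{cor01})--(\ref{cor03}); your route is slightly more self-contained and doesn't presuppose (ii). Second, in (iii)~$\Rightarrow$~(iv), where the paper invokes \cite[Lemma~3.2.2]{F1}, you substitute a one-line Cauchy--Schwarz argument: $1=\langle\theta^+,\nu\rangle_g\leqslant\|\theta^+\|_g\|\nu\|_g$ for every $\nu\in\mathcal E^+_g(A_1,1;D)$, which together with (\ref{cap-def}) gives $c_g(A_1)\leqslant\|\theta^+\|_g^2<\infty$. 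This is a clean elementary replacement. One cosmetic remark on the domination step: you let both $\mu_j$ and $\mu_j'$ vary with $j$, whereas the paper's proof of Theorem~\ref{cor-desc} keeps $\nu^-$ fixed and varies only $\nu^+_j$; both work since $\kappa_\alpha\mu_j'\uparrow\kappa_\alpha\mu_{A_1,g}'$, but the paper's choice avoids having to track the swept measures' convergence. No gaps.
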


\begin{proof}Let the assumptions of the theorem be fulfilled, and let (i) hold. According to
Theorem~\ref{cor-un-un}, (i) is equivalent to (iv), and moreover
\begin{equation}\label{cor04'}\nu=\mu_{A_1,g}-\mu_{A_1,g}'=
(\gamma_{A_1,g}-\gamma_{A_1,g}')/c_g(A_1).\end{equation}
Since $f=0$, we thus have by Lemma~\ref{l-hatg}
\begin{equation}\label{xxx}W^\nu_{\alpha,f}=\kappa_\alpha\nu=g\mu_{A_1,g}=
g\gamma_{A_1,g}/c_g(A_1)\text{ \ on\ } D.\end{equation} Combined
with (\ref{geq-2}) for $F=A_1$, this shows that (\ref{cor01}) holds
with $w'=E_g(\mu_{A_1,g})=1/c_g(A_1)\in(0,\infty)$. But according to
Theorem~\ref{thm},
$E_g(\mu_{A_1,g})=\dot{E}_\alpha(\mu_{A_1,g}-\mu_{A_1,g}')=\dot{E}_\alpha(\nu)$,
which together with the preceding relation
establishes~(\ref{cor03}).

Since $\nu$ is $c_\alpha$-absolutely continuous, we see from
(\ref{cor01}) that $\kappa_\alpha\nu^+=w'+\kappa_\alpha\nu^-$ holds
$\nu^+$-a.e. Applying now to $\nu^+$ the same arguments as in the
first paragraph of the proof of Theorem~\ref{cor-desc}, we therefore
arrive at (\ref{cor02}), thus completing the proof of the
implication (i)$\Rightarrow$(ii). The converse implication follows
directly from Theorem~\ref{desc-infty}.

Assuming now again that (i) holds, we next prove that
$\theta:=c_g(A_1)\nu$ is a condenser measure. Combining
$\kappa_\alpha\theta=c_g(A_1)\kappa_\alpha\nu$ with
(\ref{cor01})--(\ref{cor03}) shows that $\kappa_\alpha\theta$ equals
$1$ n.e.\ on $A_1$ and ${}\leqslant1$ on $\mathbb R^n$. But this
$\theta$ can be written as $\gamma_{A_1,g}-\gamma_{A_1,g}'$, see
(\ref{cor04'}), and hence $\kappa_\alpha\theta$ equals $0$ n.e.\ on
$A_2$ by (\ref{bal-eq}). Noting that
$\kappa_\alpha\theta=g\gamma_{A_1,g}>0$ on $D$, see (\ref{xxx}), we
conclude that this $\theta$ is indeed a condenser measure, which in
addition is bounded and $c_\alpha$-absolutely continuous. It has
thus been proven that (i) implies (iii) with $\theta_{\mathbf
A,\alpha}:=\theta$.

To complete the proof, it is enough to show that (iii) implies (iv).
By Definition~\ref{def-m-c},  $\kappa_\alpha\theta_{\mathbf A,\alpha}=0$ n.e.\ on $A_2$,
which in view of the stated $c_\alpha$-absolute continuity of $\theta_{\mathbf A,\alpha}$
implies that $\theta_{\mathbf A,\alpha}^-$ is the $\alpha$-Riesz swept measure of
$\theta_{\mathbf A,\alpha}^+$ onto $A_2=D^c$ (see Section~\ref{sec-bala}). Applying
Lemma~\ref{l-hatg} to the (bounded, hence extendible) measure $\theta_{\mathbf A,\alpha}^+$, we thus get
\[g\theta_{\mathbf A,\alpha}^+=\kappa_\alpha\theta_{\mathbf A,\alpha}=1\text{ \ n.e.\ on\ }A_1.\]
Integrating this equality with respect to the ($c_\alpha$-absolutely continuous, bounded)
measure $\theta_{\mathbf A,\alpha}^+$ implies that $\theta_{\mathbf A,\alpha}^+\in\mathcal E^+_g(A_1;D)$.
Applying \cite[Lemma~3.2.2]{F1} with $\kappa=g$, we therefore see from the last display that
$c_g(A_1)<\infty$, which is~(iv).
\end{proof}

In the following assertion we require that in the case $\alpha<2$, $m(D^c)>0$. For the sake of simplicity
of formulation, we also assume that in the case $\alpha=2$, $D\setminus\breve{A}_1$ is simply connected,
where $\breve{A}_1$ denotes the $\kappa_\alpha$-red\-uc\-ed kernel of $A_1$ (see Section~\ref{desc-sup-sec}).

\begin{thm}\label{cor-infty-3}Let\/ $f=0$, and let\/ $\nu\in\widetilde{\mathcal H}$, resp.\
$\nu\in\mathcal E_{\alpha,f}(\mathbf A,\mathbf 1)$, solve the\/
$\widetilde{\mathcal H}$-problem, resp.\ Problem\/~{\rm\ref{3.1}}.
In addition to\/ {\rm(\ref{cor01})} and\/ {\rm(\ref{cor02})}, then
\begin{equation}\label{cor04}\kappa_\alpha\nu<w'\text{ \ on\ }D\setminus\breve{A}_1,\end{equation}
$w'$ being defined by\/ {\rm(\ref{cor03})}, and also
\begin{equation}\label{s1infty}
 S^{\nu^+}_{D}=\left\{
\begin{array}{lll} \breve{A}_1 & \text{if} & \alpha<2,\\
\partial_D\breve{A}_1 &  \text{if} & \alpha=2.\\ \end{array} \right.
\end{equation}
\end{thm}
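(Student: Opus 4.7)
My plan is to exploit the explicit representation $\nu=\mu_{A_1,g}-\mu_{A_1,g}'$ supplied by Theorem~\ref{cor-infty-2}, together with the strong maximum principle for $\alpha$-harmonic functions in the form of \cite[Theorem~1.28]{L}. As a preliminary observation, $\nu^+=\mu_{A_1,g}$ is $c_\alpha$-absolutely continuous (see footnote~\ref{foot-ga}) and carried by $A_1$; since $A_1\setminus\breve{A}_1$ has $c_\alpha$-capacity zero by a standard countable covering argument (each point of $A_1\setminus\breve{A}_1$ admits a neighborhood $V$ with $c_\alpha(V\cap A_1)=0$), we deduce $S^{\nu^+}_D\subset\breve{A}_1$. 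Combined with $S^{\nu^-}_{\mathbb R^n}\subset D^c$, this shows that $\kappa_\alpha\nu=\kappa_\alpha\nu^+-\kappa_\alpha\nu^-$ is $\alpha$-harmonic on the relatively open subset $D\setminus\breve{A}_1$ of $D$.

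To establish (\ref{cor04}), suppose for contradiction that $\kappa_\alpha\nu(x_0)=w'$ for some $x_0\in D\setminus\breve{A}_1$, and choose $r>0$ with $\overline{B}(x_0,r)\subset D\setminus\breve{A}_1$. Then $\kappa_\alpha\nu$ is $\alpha$-harmonic on $B(x_0,r)$, continuous on its closure, and attains at the interior point $x_0$ the global maximum $w'$ granted by (\ref{cor02}); \cite[Theorem~1.28]{L} therefore forces $\kappa_\alpha\nu=w'$ $m$-a.e.\ on $\mathbb R^n$. In the case $\alpha<2$ this contradicts the hypothesis $m(D^c)>0$: applying (\ref{bal-eq}) to $\nu^+$ yields $\kappa_\alpha\nu=0$ n.e.\ on $D^c$, hence $m$-a.e.\ on $D^c$ (polar sets being Lebesgue null for $\alpha<n$), whereas $w'=1/c_g(A_1)>0$ by (\ref{cor03}). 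In the case $\alpha=2$, both $\kappa_2\nu^\pm$ are Newtonian potentials of bounded positive measures and hence vanish at infinity, so $\kappa_2\nu(x)\to 0$ as $|x|\to\infty$, incompatible with $\kappa_2\nu=w'>0$ $m$-a.e.

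Proof of (\ref{s1infty}). When $\alpha<2$ the inclusion $S^{\nu^+}_D\subset\breve{A}_1$ is the preliminary observation; for the reverse, assume $x_0\in\breve{A}_1\setminus S^{\nu^+}_D$ and pick $B(x_0,r)\subset D$ disjoint from the closed set $S^{\nu^+}_D$. Since $c_\alpha(B\cap A_1)>0$ (as $x_0\in\breve{A}_1$) and $I_{A_1,\alpha}$ is polar, some $y\in B\cap(A_1\setminus I_{A_1,\alpha})$ satisfies $\kappa_\alpha\nu(y)=w'$ by (\ref{cor01}); but $\kappa_\alpha\nu$ is $\alpha$-harmonic on $B$ and attains its global maximum $w'$ at the interior point $y$, so \cite[Theorem~1.28]{L} yields $\kappa_\alpha\nu=w'$ $m$-a.e.\ on $\mathbb R^n$, contradicting $m(D^c)>0$ exactly as above. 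When $\alpha=2$, set $U:=\mathrm{int}_D(\breve{A}_1)$; on $U$ the Green potential $g\mu_{A_1,g}$ is classically superharmonic, bounded above by $w'$, and equals $w'$ $m$-a.e.\ (from $g\mu_{A_1,g}=w'$ n.e.\ on $\breve{A}_1$ and the fact that polar sets are Lebesgue null in the Newtonian case). Lower semicontinuity combined with the superharmonic mean-value inequality then forces $g\mu_{A_1,g}\equiv w'$ on $U$; this constant function is harmonic, and the distributional identity $-\Delta g\mu_{A_1,g}=c_n\mu_{A_1,g}$ on $D$ (with $c_n>0$) yields $\mu_{A_1,g}|_U=0$, whence $S^{\nu^+}_D\subset\breve{A}_1\setminus U=\partial_D\breve{A}_1$. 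For the reverse inclusion, if $x_0\in\partial_D\breve{A}_1\setminus S^{\nu^+}_D$, pick $B(x_0,r)\subset D$ disjoint from $S^{\nu^+}_D$; since $x_0\in\breve{A}_1$ there exists $y\in B\cap(A_1\setminus I_{A_1,\alpha})$ with $\kappa_2\nu(y)=w'$, so the classical strong maximum principle applied to the harmonic function $w'-\kappa_2\nu\geqslant 0$ on $B$ gives $\kappa_2\nu\equiv w'$ on $B$; this contradicts (\ref{cor04}) on the nonempty set $B\cap(D\setminus\breve{A}_1)$, which meets $B$ because $x_0\in\overline{D\setminus\breve{A}_1}^{\,D}$.

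The principal obstacle is the sharp dichotomy between the two ranges of $\alpha$: in the nonlocal regime $\alpha<2$ local constancy of $g\mu_{A_1,g}$ on an open set does not entail vanishing of $\mu_{A_1,g}$ there, so the support must fill the whole of $\breve{A}_1$ rather than merely its $D$-boundary, and the Lebesgue-measure hypothesis $m(D^c)>0$ coupled with \cite[Theorem~1.28]{L} plays the role that the local Poisson identity $-\Delta g\mu_{A_1,g}=c_n\mu_{A_1,g}$ plays when $\alpha=2$.
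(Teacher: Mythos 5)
Your $\alpha<2$ argument is sound and essentially parallels the paper's, with a minor structural rearrangement: the paper first establishes the strict inequality $g\mu<w'$ on $D\setminus S_D^{\nu^+}$ and derives (\ref{s1infty}) before concluding (\ref{cor04}), whereas you prove (\ref{cor04}) directly; but the mechanism is the same — \cite[Theorem~1.28]{L} applied to the $\alpha$-harmonic function $\kappa_\alpha\nu^+$ dominated by the $\alpha$-superharmonic function $w'+\kappa_\alpha\nu^-$, the balayage relation (\ref{bal-eq}), the hypothesis $m(D^c)>0$, and $w'>0$. Your preliminary observation that $S_D^{\nu^+}\subset\breve{A}_1$ (via $c_\alpha$-absolute continuity and a Lindel\"of covering argument for $A_1\setminus\breve{A}_1$) is correct and is also implicitly used by the paper. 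Your first half of the $\alpha=2$ argument for (\ref{s1infty}), forcing $\mu_{A_1,g}|_{\mathrm{int}_D\breve{A}_1}=0$ via the Poisson identity, likewise works and mirrors the paper's appeal to \cite[Theorem~1.13]{L}.

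There is, however, a genuine gap in your proof of (\ref{cor04}) for $\alpha=2$. You apply \cite[Theorem~1.28]{L} to deduce $\kappa_2\nu=w'$ $m$-a.e.\ on all of $\mathbb R^n$, but that theorem is a \emph{nonlocal} strong maximum principle for Riesz potentials of order $\alpha<2$: it exploits the nonlocal character of $\alpha$-superharmonicity and has no Newtonian analogue. For $\alpha=2$ the classical strong maximum principle is local; from $\kappa_2\nu(x_0)=w'$ at $x_0\in D\setminus\breve{A}_1$ one only gets $\kappa_2\nu\equiv w'$ on the \emph{connected component} of $D\setminus\breve{A}_1$ containing $x_0$, not $m$-a.e.\ on $\mathbb R^n$, so your vanishing-at-infinity contradiction is unavailable. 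Notice that you never use the theorem's hypothesis that $D\setminus\breve{A}_1$ be simply connected when $\alpha=2$ — that hypothesis is there precisely to guarantee connectedness, so that the local maximum principle propagates $g\mu=w'$ over the whole domain $D\setminus\breve{A}_1$. The paper then combines this with (\ref{cor01}) to get $g\mu=w'$ n.e.\ on $D$, whence $\mu/w'$ would be a $g$-equilibrium measure on the whole of $D$, which is impossible. The second half of your $\alpha=2$ argument for (\ref{s1infty}) invokes (\ref{cor04}) and therefore inherits this gap.
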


\begin{proof}Under the stated hypotheses, $\nu^+=\mu$, where $\mu:=\mu_{A_1,g}$ is the
$g$-capacitary measure on $A_1$, and $g\mu=\kappa_\alpha\nu$ on $D$. Assuming first $\alpha<2$,
we begin by showing that
\begin{equation}\label{lesssup}g\mu<w'\text{ \ on\ } D\setminus S_D^\mu.\end{equation}
Suppose on the contrary that this fails for some $x_0\in D\setminus
S^\mu_D$. By (\ref{cor02}), then $g\mu(x_0)=w'$, or equivalently
\begin{equation}\label{rrr}\kappa_\alpha\mu(x_0)=w'+\kappa_\alpha\mu'(x_0).
\end{equation}
Choose $\varepsilon>0$ so that $\overline{B}(x_0,\varepsilon)\subset
D\setminus S^\mu_D$. Since $\kappa_\alpha\mu$ is $\alpha$-har\-monic
on $B(x_0,\varepsilon)$ and continuous on
$\overline{B}(x_0,\varepsilon)$, while $w'+\kappa_\alpha\mu'$ is
$\alpha$-super\-harmonic on $\mathbb R^n$, we conclude from
(\ref{cor02}) and (\ref{rrr}) with the aid of \cite[Theorem~1.28]{L}
that $\kappa_\alpha\mu=w'+\kappa_\alpha\mu'$  $m$-a.e.\ on $\mathbb
R^n$. As $\kappa_\alpha\mu=\kappa_\alpha\mu'$ holds n.e.\ on $D^c$,
hence $m$-a.e.\ on $D^c$, we thus get $w'=0$. A contradiction.

We next proceed by proving the former identity in (\ref{s1infty}).
Let, on the contrary, there exist $x_1\in\breve{A}_1$ such that
$x_1\not\in S^\mu_D$, and let $V\subset D\setminus S^\mu_D$ be an
open neighborhood of $x_1$. By (\ref{lesssup}), then $g\mu<w'$ on
$V$. On the other hand, since $V\cap A_1$ has nonzero capacity,
$g\mu(x_2)=w'$ for some $x_2\in V$ by (\ref{cor01}). The
contradiction obtained shows that, indeed, $S^\mu_D=\breve{A}_1$.
Substituting this into (\ref{lesssup}) establishes (\ref{cor04}) for
$\alpha<2$.

In the rest of the proof, let $\alpha=2$. To verify (\ref{cor04}),
assume on the contrary that it fails for some $x_3$ in the domain
$D_0:=D\setminus\breve{A}_1$. By (\ref{cor02}), then $g\mu(x_3)=w'$,
which by the maximum principle applied to the harmonic function
$g\mu$ on $D_0$ yields $g\mu=w'$ on $D_0$. Together with
(\ref{cor01}) this shows that $g\mu=w'$ n.e.\ on $D$, so that
$\mu/w'$ serves as the $g$-equilibrium measure on the whole of $D$,
which is impossible.

By \cite[Theorem~1.13]{L}, we see from (\ref{cor01}) that the
restriction of $\mu$ to $\breve{A}_1\setminus\partial_D\breve{A}_1$
equals $0$, and so $S^\mu_D\subset\partial_D\breve{A}_1$. Thus, if
we prove the converse inclusion, then the latter identity in
(\ref{s1infty}) follows. On the contrary, assume there is a point
$y\in\partial_D\breve{A}_1$ such that $y\notin S^\mu_D$; then one
can choose a neighborhood $V_1\subset D$ of $y$ with the property
$V_1\cap S^\mu_D=\varnothing$. As $c_\alpha(V_1\cap A_1)>0$,
(\ref{cor01}) implies that $g\mu(y_1)=w'$ for some $y_1\in V_1$.
Taking (\ref{cor02}) into account and applying the maximum principle
to the harmonic function $g\mu$ on $V_1$, we thus get $g\mu=w'$ on
$V_1$. Since $V_1\cap D_0\ne\varnothing$, this
contradicts~(\ref{cor04}).\end{proof}

\section{Comments}

\begin{rem}Based on Theorem~\ref{cor-infty-3} and its proof, we are led to the following assertion,
providing a description of the $\alpha$-Green equilibrium measure $\gamma$ on $F$.
As usual, we denote by $\breve{F}$ the $\kappa_\alpha$-red\-uc\-ed kernel of $F$
(see Section~\ref{desc-sup-sec}).

\begin{thm}\label{eq-m-desc} Let\/ $F$ be a relatively closed subset of\/ $D$ with $c_g(F)<\infty$.
If\/ $\alpha<2$, assume additionally that\/ $m(D^c)>0$, while in the case\/ $\alpha=2$
let\/ $D\setminus\breve{F}$ be simply connected. Then the support of
the\/ $\alpha$-Green equilibrium measure\/ $\gamma$ on\/ $F$ is given by
\[
 S^{\gamma}_{D}=\left\{
\begin{array}{lll} \breve{F} &  \text{if} & \alpha<2,\\
\partial_D\breve{F} &  \text{if}  & \alpha=2.\\ \end{array} \right.
\]
\end{thm}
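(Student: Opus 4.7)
The plan is to deduce Theorem~\ref{eq-m-desc} as a direct specialization of Theorem~\ref{cor-infty-3} to the generalized condenser $\mathbf A:=(F,D^c)$ with trivial external field $f=0$ and no constraint ($\sigma=\infty$). The bridge between the two statements is that, once the $\widetilde{\mathcal H}$-problem is solvable, its solution $\nu$ satisfies $\nu^+=\mu_{F,g}$, and this measure is a positive scalar multiple of the $\alpha$-Green equilibrium measure $\gamma$ on $F$.

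First I would verify the standing assumptions of Section~\ref{sec-aux} for this choice of $\mathbf A$. The non-$\alpha$-thinness of $D^c$ at infinity and $c_\alpha(D^c)>0$ are part of the permanent set-up. By footnote~\ref{foot-ga}, $c_g(F)>0$ is equivalent to $c_\alpha(F)>0$; the trivial subcase $c_g(F)=0$ forces $\gamma=0$ and, by monotonicity of $c_\alpha$, also $\breve F=\varnothing$, so both sides of the claimed identity are empty and I may henceforth assume $0<c_g(F)<\infty$. Theorem~\ref{th-equi} then produces $\gamma$, and Remark~\ref{rem-eq-capacit} gives $\gamma=c_g(F)\,\mu_{F,g}$; in particular $S^\gamma_D=S^{\mu_{F,g}}_D$.

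Next, applying Theorem~\ref{cor-un-un} with $A_1:=F$, $A_2:=D^c$, $f=0$, and $\sigma=\infty$, the assumption $c_g(F)<\infty$ is exactly what guarantees the solvability of the $\widetilde{\mathcal H}$-problem, its (unique) solution being $\nu=\mu_{F,g}-\mu_{F,g}'$. Hence $\nu^+=\mu_{F,g}$ and therefore $S^{\nu^+}_D=S^\gamma_D$. The hypotheses of Theorem~\ref{cor-infty-3} now match ours verbatim upon substituting $A_1=F$: the requirement $m(D^c)>0$ for $\alpha<2$ and the simple connectedness of $D\setminus\breve F$ for $\alpha=2$ are both assumed. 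Formula (\ref{s1infty}) applied to $\nu^+$ then yields exactly the claimed description of $S^\gamma_D$.

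There is no substantive obstacle; the argument is essentially a translation of notation, with the main work having already been carried out in the proof of Theorem~\ref{cor-infty-3}. The only care required is to check that the $\kappa_\alpha$-reduced kernel $\breve{A}_1$ appearing in (\ref{s1infty}) coincides with $\breve F$ in the present setting, which is immediate from the definition of the reduced kernel, and to handle the degenerate case $c_g(F)=0$ as noted above.
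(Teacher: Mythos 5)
Your proposal is correct and is essentially the paper's own argument: the paper derives Theorem~\ref{eq-m-desc} exactly as a specialization of Theorem~\ref{cor-infty-3} (via Theorem~\ref{cor-un-un} and Remark~\ref{rem-eq-capacit}, which give $\nu^+=\mu_{F,g}$ and $S^{\nu^+}_D=S^\gamma_D$). Your additional care with the degenerate case $c_g(F)=0$ and with checking the standing hypotheses of Section~\ref{sec-aux} is sound but does not change the route.
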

\end{rem}

\begin{rem}\label{dr-dif}As seen from the results obtained, the generalized condensers such that the
unconstrained minimum  weak $\alpha$-Riesz energy problems are
solvable differ drastically from those for which the solvability
occurs in the constrained setting. Indeed, if the constraint
$\xi\in\mathfrak C(A_1)$ is \textsl{bounded\/}, then  the
$\widetilde{\mathcal H}$-problem as well as Problem~\ref{3.1} is
solvable in either Case~I or Case~II even if
$c_\alpha(A_2\cap\mathrm{Cl}_{\mathbb R^n}A_1)>0$ (actually, even if
$A_1=D$; see Theorem~\ref{main-th}). However, if $f=0$ and
$\sigma=\infty$ ({\it no\/} external field and {\it no\/} active
constraint), then the $\widetilde{\mathcal H}$-problem as well as
Problem~\ref{3.1} reduces to problem (\ref{cap-def}) with $Q=A_1$
and $\kappa=g$, or equivalently to the problem on the existence of
the $\alpha$-Green equilibrium measure $\gamma_{A_1}$, while the
solvability of the latter necessarily implies that
$c_\alpha(A_2\cap\mathrm{Cl}_{\mathbb R^n}A_1)=0$ (see
Corollary~\ref{lusin}).
\end{rem}

\section{An example of a Green equilibrium measure with infinite Newtonian energy}\label{appen}

Let $n=3$, $\alpha=2$, and let $D=\bigl\{(x_1,x_2,x_3)\in\mathbb R^3:\ x_1>0\bigr\}$. We construct
a relatively closed $2$-regular subset $F$ of $D$ with $0<c_g(F)<\infty$ such that its (classical)
Green equilibrium measure $\gamma=\gamma_F$ (which exists, see Theorem~\ref{th-equi}) has infinite
Newtonian energy. The present example is a strengthening of the example in \cite[Appendix]{DFHSZ2},
quoted in the Introduction, because the measure in question now is an equilibrium measure.

This example shows that the results of the present paper, related to minimum weak
$\alpha$-Riesz problems over subclasses of $\mathfrak M(\mathbf A)$, in general fail
if we replace the weak energy by the standard $\alpha$-Riesz energy. This is also the case for
\cite[Theorem~6.1]{FZ-Pot}, quoted in Remark~\ref{rem-FZ-Pot} above. This justifies the need for
the concept of weak $\alpha$-Riesz energy when dealing with condenser problems.

\begin{exmp}\label{counterex2} Let $D$ be a domain in $\mathbb R^3$, specified above.
The boundary $\partial D$ is then the plane $\{x_1=0\}$. For $r>0$
write $K_r:=\bigl\{(0,x_2,x_3)\in\mathbb R^3: \ x_2^2+x_3^2\leqslant
r^2\bigr\}$; then $K_r$ is the closed disc in the plane $\partial D$
of radius $r$ centered at $(0,0,0)$. Write briefly $K:=K_1$. For
$\varepsilon\in\mathbb R$ let $K_r^\varepsilon$ denote the
translation of $K_r$ by the vector $(\varepsilon,0,0)$. Thus
$K_r^\varepsilon\subset D$ when $\varepsilon>0$. For
$\varepsilon,s>0$ denote by $K_{r,s}^\varepsilon\subset D$ the
translation of $K_r$ by $(\varepsilon,s,0)$.

Since $0<c_2(K)<\infty$ (in fact $c_2(K)=2/\pi^2$, see e.g.\
\cite[Chapter~II, Section~3, n$^\circ$\,14]{L}), there exists the
(unique) $\kappa_2$-capacitary measure $\mu$ on $K$ (see
Remarks~\ref{ex-perf} and~\ref{remark}). Its Newtonian potential
$\kappa_2\mu$ on $\mathbb R^3$ is then constant everywhere on the
disc $K$, e.g.\ by the Wiener criterion, and equals there the
Newtonian energy $E_2(\mu)=1/c_2(K)$. By the continuity principle
\cite[Theorem~1.7]{L}, $\kappa_2\mu$ is (finitely) continuous on
$\mathbb R^3$, and even uniformly since $\kappa_2\mu(x)\to0$
uniformly as $|x|\to\infty$, the support $S_{\mathbb R^3}^{\mu}$
being compact (actually, $S_{\mathbb R^3}^{\mu}=K$).

For any positive Radon measure $\nu$ on $\mathbb R^3$ we denote by
$\hat{\nu}$ the image of $\nu$ under the reflection
$(x_1,x_2,x_3)\mapsto(-x_1,x_2,x_3)$ with respect to $\partial D$.
Similarly, for any $y=(y_1,y_2,y_3)\in\mathbb R^3$ write
$\hat{y}=(-y_1,y_2,y_3)$. The \textsl{$2$-Green kernel\/} $g^2_D=g$
on the half-space $D$ is then given by
\[g(x,y)=\kappa_2(x,y)-\kappa_2(x,\hat{y})=\kappa_2\varepsilon_y(x)-\kappa_2\widehat{\varepsilon_y}(x),\]
see e.g.\ \cite[Theorem 4.1.6]{AG}. This yields the homogeneity relation
\begin{equation}\label{g-r} g(rx,ry)=r^{-1}g(x,y)\text{ \ for\ }x,y\in D, \ r>0.\end{equation}

Writing for brevity $\psi(\varepsilon):=c_g(K_1^\varepsilon)$, we have for $\varepsilon,r\in(0,\infty)$
\begin{equation}\label{phi}c_g(K_r^\varepsilon)=rc_g(K^{\varepsilon/r}_1)=
r\psi\Bigl(\frac\varepsilon r\Bigr).\end{equation} In fact, if
$\gamma_\varepsilon$ denotes the $g$-equilibrium measure on
$K_1^{\varepsilon/r}$, then the image $r\circ\gamma_\varepsilon$ of
$\gamma_\varepsilon$ under the homothety $x\mapsto rx$, $x\in D$
(which preserves the total mass) is carried by $K_r^\varepsilon$ and
has constant $g$-potential $1/r$ in view of (\ref{g-r}). Therefore,
$r(r\circ\gamma_\varepsilon)$ is the $g$-equilibrium measure on
$K_r^\varepsilon$, and hence $c_g(K_r^\varepsilon)$ equals the total
mass
$r\gamma_\varepsilon(K_1^{\varepsilon/r})=rc_g(K_1^{\varepsilon/r})$.
In particular,
\begin{equation}\label{phi-1}\psi(\delta)=c_g(K_1^\delta)=
\delta c_g(K_{1/\delta}^1)\text{ \ for any\ }\delta\in(0,\infty). \end{equation}

\begin{lem}\label{lemma} The function\/ $\psi$ is\/ {\rm(}finitely\/{\rm)} continuous. Moreover,
\[\lim_{\delta\to0}\,\psi(\delta)=\infty.\]\end{lem}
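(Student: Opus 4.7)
The plan splits naturally into two parts. For the divergence $\psi(\delta)\to\infty$ as $\delta\to 0$, the strategy is to exploit the scaling identity (\ref{phi-1}), which reduces the statement to the quantitative lower bound $c_g(K_R^1)\ge CR^2$ as $R\to\infty$; then $\psi(\delta)=\delta\,c_g(K_{1/\delta}^1)\ge C/\delta\to\infty$. For continuity, I will use a perturbation argument based on translating the $g$-equilibrium measure and the explicit reflection formula $g(x,y)=\kappa_2(x,y)-\kappa_2(x,\hat y)$.

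To establish the quadratic lower bound, I will take the uniform test measure $\nu_R:=c\,m|_{K_R^1}$ of constant density $c>0$ on the planar disc $K_R^1$ (which has finite $g$-energy, since on a disc in the plane $\{x_1=1\}$ the integral of $1/|x-y|$ is finite). A direct computation at the centre $(1,0,0)$ gives
\[
g\nu_R(1,0,0)=2\pi c\int_0^R\Bigl[1-\frac{r}{\sqrt{4+r^2}}\Bigr]\,dr=2\pi c\,[R-\sqrt{4+R^2}+2]\le 4\pi c.
\]
By radial monotonicity of the integrand $1/|u|-1/\sqrt{4+|u|^2}$ in the plane $\{x_1=1\}$, the supremum of $g\nu_R$ over $K_R^1$ is attained at the centre, and since $g\nu_R$ is harmonic on $D\setminus K_R^1$, vanishes on $\partial D$, and tends to $0$ at infinity, the maximum principle propagates the bound to all of $D$. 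Choosing $c=1/(4\pi)$ yields $g\nu_R\le 1$ on $D$. The Cauchy--Schwarz inequality applied to $\nu_R$ and the $g$-equilibrium measure $\gamma_{K_R^1}$ (which exists by Theorem~\ref{th-equi}) then yields
\[
\nu_R(K_R^1)=\int g\gamma_{K_R^1}\,d\nu_R=\int g\nu_R\,d\gamma_{K_R^1}\le \gamma_{K_R^1}(D)=c_g(K_R^1),
\]
using $g\gamma_{K_R^1}=1$ $\nu_R$-a.e.\ (by $c_g$-absolute continuity of $\nu_R$) and $g\nu_R\le 1$ on $D\supset\mathrm{supp}\,\gamma_{K_R^1}$. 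Hence $c_g(K_R^1)\ge c\pi R^2=R^2/4$, and $\psi(\delta)\ge 1/(4\delta)$.

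For continuity, fix $\delta_0\in(0,\infty)$ and let $\gamma_0$ denote the $g$-equilibrium measure on $K_1^{\delta_0}$. I will translate it by $(\delta-\delta_0,0,0)$ to a measure $\tilde\gamma$ of the same total mass $\psi(\delta_0)$, carried by $K_1^\delta$, and compare $g\tilde\gamma$ on $K_1^\delta$ with $g\gamma_0$ on $K_1^{\delta_0}$. Using the reflection formula, the Newtonian term is translation-invariant and cancels, so only the reflected term changes; since the image $\hat y$ stays at distance $\ge 2\delta_0$ from $x_0\in K_1^{\delta_0}$, this change is $O(|\delta-\delta_0|)$ uniformly. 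Normalizing $\tilde\gamma$ to make its $g$-potential $\le 1$ on $K_1^\delta$ and applying the same dual comparison as above gives $\psi(\delta)\ge\psi(\delta_0)(1-o(1))$; swapping the roles of $\delta$ and $\delta_0$ (translating $\gamma_{K_1^\delta}$ to $K_1^{\delta_0}$) yields the matching upper bound $\psi(\delta_0)\ge\psi(\delta)(1-o(1))$, hence continuity at $\delta_0$.

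The main obstacle will be the uniform upper bound on $g\nu_R$ over all of $K_R^1$ — reducing from the pointwise value at the centre to the entire disc via radial monotonicity and the maximum principle — together with the parallel uniform estimate needed in the perturbation argument. Both require careful control of the reflected Newtonian kernel, but in both settings the reflected point is bounded away from the source, which makes the cancellation between $\kappa_2\nu$ and $\kappa_2\hat\nu$ quantitatively manageable.
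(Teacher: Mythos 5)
Your proof is correct in substance, but both halves take routes genuinely different from the paper's, so a comparison is worthwhile.

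For the divergence $\psi(\delta)\to\infty$, the paper translates the Newtonian capacitary measure $\mu$ of the unit disc $K$ to $K_1^\delta$ (getting $\mu^\delta$), computes $E_g(\mu^\delta)=\int\kappa_2\mu\,d\mu-\int\kappa_2\mu(-2\delta,x_2,x_3)\,d\mu$, and concludes from the uniform continuity of $\kappa_2\mu$ that $E_g(\mu^\delta)\to0$, whence $\psi(\delta)\geqslant1/E_g(\mu^\delta)\to\infty$. Your argument instead builds a uniform test measure on the large disc $K_R^1$, bounds its $g$-potential by $4\pi c$ everywhere via an explicit radial computation plus the maximum principle, and then uses the dual ``Frostman'' comparison $\nu(K)\leqslant c_g(K)$ when $g\nu\leqslant1$ to obtain the quantitative bound $c_g(K_R^1)\geqslant R^2/4$, i.e.\ $\psi(\delta)\geqslant1/(4\delta)$. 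That is sharper information than the paper's proof yields (the paper only gets divergence), at the price of invoking the domination/maximum principle and the dual capacity characterisation.

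For continuity, the paper's argument is considerably simpler than yours: after the same scaling reduction to $r\mapsto c_g(K_r^1)$, right-continuity follows from $K_{r_0}^1=\bigcap_{r>r_0}K_r^1$ (decreasing compacts) via \cite[Lemma~4.2.1]{F1}, and left-continuity from $K_{r_0}^1\setminus\partial_{r_0}=\bigcup_{r<r_0}K_r^1$ together with $c_g(\partial_{r_0})=0$ for the boundary circle, via \cite[Theorem~4.2]{F1}. No explicit kernel estimates are needed. Your perturbation argument---translating equilibrium measures, estimating only the reflected term, and running the dual comparison in both directions---does work, but it is more delicate than you let on. In the ``swapped'' direction the error term involves $\psi(\delta)$ rather than $\psi(\delta_0)$, so to make the $o(1)$ uniform you must first know that $\psi$ is locally bounded near $\delta_0$ (which is true, since for $\delta\in[\delta_0/2,2\delta_0]$ the sets $K_1^\delta$ all lie in a fixed compact subset of $D$ of finite $g$-capacity, but this should be said); alternatively one must run a small bootstrap on the implicit inequality $\psi(\delta)\leqslant\psi(\delta_0)\bigl(1+C|\delta-\delta_0|\psi(\delta)/\delta^2\bigr)$. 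These are fixable details, not fatal gaps, but they make your continuity proof rather heavier than the monotone-continuity-of-capacity argument the paper uses.
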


\begin{proof} The value $\psi(\delta)$ is finite because $K_1^\delta$ is compact in $D$. In view of
(\ref{phi}) and (\ref{phi-1}), for continuity of $\psi$ it suffices
to establish continuity of $r\mapsto c_g(K_r^1)$ for
$r\in(0,\infty)$. This mapping is continuous from the right at any
$r_0$ because $K_{r_0}^1=\bigcap_{r>r_0}K_r^1$ with $K_r^1$ compact
and decreasing for $r\downarrow r_0$ (see \cite[Lemma~4.2.1]{F1}).
For any $r\in(0,\infty)$ denote by $\partial_r$ the boundary of
$K_r^1$ relative to the plane $\{x_1=1\}$, i.e.\
$\partial_r:=\partial_{\{x_1=1\}}K_r^1$. For continuity of
$c_g(K_r^1)$ from the left, note that
$K_{r_0}^1\setminus\partial_{r_0}=\bigcup_{r<r_0}\,K_r^1$ (for
example, through an increasing sequence of $r$), hence
$c_g(K_{r_0}^1)=\sup_{r<r_0}c_g(K_r^1)$ by \cite[Theorem~4.2]{F1}
because the Newtonian capacity of $\partial_{r_0}$, imbedded  into
$\mathbb R^3$, equals $0$, and hence so does $c_g(\partial_{r_0})$
(footnote~\ref{foot-ga}).

Denoting by $\mu^\delta$ the image of $\mu$ under the translation
$(\delta,0,0)$, $\mu$ being the $\kappa_2$-cap\-acitary measure on
$K$, we get for any $\delta>0$
\begin{align}
\label{to00}
E_g(\mu^\delta)
  &=\int g\mu^\delta\,d\mu^\delta=\int\Bigl(\,\kappa_2\mu^\delta-\kappa_2\widehat{\mu^\delta}\Bigr)\,
  d\mu^\delta\\
{}&=\int\kappa_2\mu\,d\mu-\int\kappa_2\mu(-2\delta,x_2,x_3)\,d\mu(x_1,x_2,x_3).\notag\end{align}
Note that $\kappa_2\mu(-2\delta,x_2,x_3)\to\kappa_2\mu(0,x_2,x_3)$
uniformly with respect to $(0,x_2,x_3)\in K$ as $\delta\to0$, which
is seen from the uniform continuity of $\kappa_2\mu$ on $\mathbb
R^3$ established above. In view of (\ref{to00}), we therefore get
\[\psi(\delta)=c_g(K_1^\delta)\geqslant 1/E_g(\mu^\delta)\to\infty\text{ \ as \ } \delta\to0,\]
$\mu^\delta$ being the $\kappa_2$-capacitary measure on $K_1^\delta$.\end{proof}

For the construction of the desired relatively compact subset $F$ of
$D$ with finite $c_g(F)$ we consider sequences of numbers
$\varepsilon_j>0$, $r_j=j^{-3}$, and $s_j:=aj\uparrow\infty$ for
some constant $a\geqslant4$. The set $F$ will be of the form
\begin{equation}\label{F-Fj}F:=\bigcup_jF_j,\quad F_j:=K^{\varepsilon_j}_{r_j,s_j}.\end{equation}
Such $F$ is indeed relatively closed in $D$, for if a sequence $\{x_k\}_{k\in\mathbb N}\subset F$
converges to $x\in D$, then all the $x_k$ lie in a suitable (compact) finite union of sets $F_j$
because $s_j\to\infty$.

Define $b:=\inf_\delta\,\psi(\delta)<\infty$. In view of Lemma
\ref{lemma} we choose for every integer $j>b$ a number $\delta_j$
such that $\psi(\delta_j)=j$. Next choose
$\varepsilon_j=r_j\delta_j=j^{-3}\delta_j$ for $j>b$ and note that
with summation over $j>b$ we have by (\ref{phi}), (\ref{phi-1}),
and~(\ref{F-Fj})
\[\sum_{j}\,c_g(F_j)=\sum_{j}\,c_g(K_{r_j}^{\varepsilon_j})=
\sum_{j}\,
r_jc_g(K_1^{\varepsilon_j/r_j})=\sum_{j}\,j^{-3}\psi(\delta_j)=\sum_{j}\,j^{-2}<\infty.\]
In view of the countable subadditivity of $c_g(\cdot)$ on
universally measurable sets, see \cite[Lemma~2.3.5]{F1}, it follows
that $c_g(F)<\infty$, and so there exists the (unique)
$g$-equilibrium measure $\gamma=\gamma_F$ on $F$. This positive
measure $\gamma$ has constant $g$-potential $1$ everywhere on $F$
because every point of $F$ (that is, of some $F_j$) is $2$-regular,
as noted earlier.

Denoting $\gamma_j:=\gamma|_{F_j}$ (where
$F_j=K_{r_j,s_j}^{\varepsilon_j}$), we next show that
\begin{equation}\label{lambda-j}\frac12\leqslant g\gamma_j\leqslant1\text{ \ on\ }F_j.\end{equation}
Fix $a:=\max\{\gamma(F),4\}$ for our choice of the sequence $s_j=aj$. For any $x\in F_j$,
\begin{equation}\label{small-g} \sum_{k\ne j}\,g\gamma_k(x)\leqslant
\sum_{k\ne j}\,\kappa_2\gamma_k(x)=\sum_{k\ne
j}\,\int_{F_k}\frac1{|x-y|}\,d\gamma_k(y) \leqslant\sum_{k\ne
j}\,\frac{\gamma(F_k)}{a-2}\leqslant\frac{\gamma(F)}{a/2}\leqslant\frac12\end{equation}
after using that $r_j,r_k\leqslant1$ and
$|x-y|\geqslant|s_j-s_k|-(r_j+r_k)\geqslant a-2\geqslant
a/2\geqslant2$ for $x\in F_j$, $y\in F_k$. It follows by subtraction
that $g\gamma_j\geqslant1/2$ on $F_j$ and of course
$g\gamma_j\leqslant g\gamma\leqslant1$ on $D$, thus
establishing~(\ref{lambda-j}).

Since $g(2\gamma_j)\geqslant1$ on $F_j$ by (\ref{lambda-j}) and
\[c_g(F_j)=\inf\,\bigl\{\nu(F_j): \ \nu\in\mathfrak M^+(F_j), \
g\nu\geqslant1\text{\ n.e.\ on\ }F_j\,\bigr\}\]
(see e.g.\ \cite[Theorem~5.5.5(ii)]{AG} or \cite[p.~243]{Doob}), we thus get
\begin{equation}\label{chain}c_g(F_j)\leqslant2\gamma(F_j).\end{equation}

As $\gamma_j$ is carried by $F_j=K^{\varepsilon_j}_{r_j,s_j}$ with
diameter $2r_j$, we have for the Newtonian energy $E_2(\gamma)$
after summation over $j>b$:
\[E_2(\gamma)\geqslant\sum_{j}\,E_2(\gamma_j)\geqslant\sum_{j}\,\frac{\gamma(F_j)^2}{2r_j}\geqslant
\sum_{j}\,\frac{c_g(F_j)^2}{8r_j}=
\sum_{j}\,\frac{(r_j\psi(\delta_j))^2}{8r_j}=\sum_{j}\,\frac1{8j}=\infty,\]
where the third inequality holds by (\ref{chain}), the first
equality by (\ref{phi}) and (\ref{phi-1}), and the second equality
by our choices $r_j=j^{-3}$ and $\psi(\delta_j)=j$.
\end{exmp}

\end{document}